\RequirePackage[french, english]{babel}
\documentclass[a4paper,11pt]{amsart}

\usepackage{amssymb}
\usepackage{amsmath}
\usepackage{amsfonts}

\usepackage{pstricks}
\usepackage{amsthm}

\usepackage{verbatim}

\usepackage{hyperref}
\pagestyle{headings}

\usepackage{amscd}
\usepackage[all]{xy}

%%%%%%%%%%%%%%%%%%%%%%%%%%%%%%%
%%       Adding, deleting and comments
% Uses Comment.sty version 3.4, April 1998, by Victor Eijkhout
%\usepackage{comment}
% usage:
% \begin{deleteC}
%    ...
% \end{deleteC}
% similar for commentC
%\newenvironment{addC}{\begingroup\sffamily}{\endgroup}
%\newenvironment{changeC}{\begingroup\sffamily}{\endgroup}
%\newenvironment{commentC}{\begingroup %\sffamily
%                                            \bfseries [C:}{]\endgroup}
%\excludecomment{deleteC}
%
%\newenvironment{addM}{\begingroup\sffamily}{\endgroup}
%\newenvironment{changeM}{\begingroup\sffamily}{\endgroup}
%\newenvironment{commentM}{\begingroup \sffamily \bfseries [M:}{]\endgroup}
%\excludecomment{deleteM}

%%%%%%%% Ralph Smith's Formal Script%%%%%%%%%%%%%%%%%%%%%%%%%%%%%%%%%%%%%%%%%%%%%%%%%%%%%%%%%%%%%%
\DeclareSymbolFont{rsfs}{U}{rsfs}{m}{n}
\DeclareSymbolFontAlphabet{\mathcal}{rsfs}

%\font\rus=wncyr10 scaled \magstep1%

%%%% Cyrillic
\DeclareFontEncoding{OT2}{}{} % to enable usage of cyrillic fonts
\DeclareTextFontCommand{\textcyr}{\fontencoding{OT2}
    \fontfamily{wncyr}\fontseries{m}\fontshape{n}\selectfont}

% Font choice:

% Accents

\theoremstyle{plain}

\newtheorem{theo}{Th\'eor\`eme}[section]
\newtheorem*{thm}{Th\'eor\`eme}

\newtheorem{defi}[theo]{D\'efinition}
\newtheorem{prop}[theo]{Proposition}

\newtheorem{cor}[theo]{Corollaire}
\newtheorem{lem}[theo]{Lemme}

\newenvironment{exs}{\noindent{\textbf{Exemples :}}}{}

\theoremstyle{definition}

\newtheorem{rem}[theo]{Remarque}

\def \Romannumeral #1 {\expandafter\uppercase\expandafter {\romannumeral #1} }

\def \Br {{\rm{Br}}}

\def \tor {{\rm{tor}}}

\def \Aut{{\rm Aut \,}}

\def\ov{\overline}

\def\Gm{{\mathbf{G}_m}}

%%%%%%%%%%%%

  %%%%%%%%%%%%%% Borovoi

%\def\Sha{{\rus Sh}}

% Script letters

%\newcommand{\sH}{\mathcal{H}}

\newcommand{\xto}[1]{\xrightarrow{#1}}

% Algebraic groups
%\newcommand{\Gm}{\mathb{G}_m}
%\newcommand{\Ga}{\mathb{G}_\mathrm{a}}

\def\uu{^\mathrm{u}}

\def\red{^\mathrm{red}}
\def\tor{^{\mathrm{tor}}}
\def\sc{^{\mathrm{sc}}}
\renewcommand{\ss}{^{\mathrm{ss}}}

\def\lin{^{\mathrm{lin}}}
\def\li{_{\mathrm{lin}}}
\def\SA{\textup{SA}}

% Cohomology
 % dbl brackts prevent `\mathchoice' error

% Barred letters
%\newcommand{\kbar}{{\bar{k}}}

%\newcommand{\Hbar}{{\capbar{H}}}

%\newcommand{\Zbar}{{\capbar{Z}}}

%%%%%%%%%%%%%%%%%%%%%%%%%%%%%%%%%%%%%%%%%%%%%%%%%%%%%%%%%%%%%%%%%%%%%%%%

%\def\ab{{\text{ab}}}
%\def\Zz{{\mathbf{Z}}}
%\def\Gal{{\text{Gal}}}
%\def\kbar{{\overline{k}}}
%\def\Hbar{{\overline{H}}}
%\def\tor{{}^{\text{tor}}}

%\renewcommand{\infty}{{r}}

\newcommand{\ensemble}[1]{\ensuremath{\mathbf #1} \xspace}
  \renewcommand{\H}{\ensemble H}

\def\R{{\mathbf{R}}}

\newcommand{\abvar}{{}^{\textup{ab}}}

\def\Br{{\rm Br}}
\def\Pic{{\rm Pic}}
\def\UPic{{\rm UPic}}
\def\Div{{\rm Div}}
\def\SAut{{\rm SAut}}
\def\Aut{{\rm Aut}}

%\makeindex
\voffset=-1cm
\vsize=230mm %225mm
\textheight=230mm %225mm
\hsize=150mm %138mm
\textwidth=150mm %138mm
\hoffset=-1cm
\parskip=2pt minus1pt
\parindent=12pt

%%%%%%%%%%%%%%%%%%%%%%%%%%%%%%%%%%%%%%%%%%%%%%%%%%%%%%%%%%%%%%%%%%%%%%%%%%%%%%%%%%%%%%%%%%%%

\title[Groupe de Brauer alg\'ebrique d'un torseur]
{        %{\protect\hfill \normalfont \tiny
%7/03/09
%            \\ \vspace{10pt}}
Une formule pour le groupe de Brauer alg\'ebrique d'un torseur \\
%{\normalfont\tiny Version pr\'eliminaire}
}

\author{Cyril Demarche}
\address{Cyril Demarche
\newline Universit\'e Pierre et Marie Curie (Paris 6) \newline
Institut de Math\'ematiques de Jussieu \newline
4 place Jussieu, 75252 Paris Cedex 05 \newline
France
}

\email{demarche@math.jussieu.fr}

\begin{document}

%\baselineskip=15pt
%\date{February 15, 2009}

\date{23 novembre 2010}

\subjclass[2000]{Primary: 14M17; Secondary : 14F22, 20G15, 18E30}

\maketitle

\selectlanguage{english}

\begin{abstract}
For a homogeneous space $X$ of a connected algebraic group $G$ (with
 connected stabilizers) over a
 field $k$ of characteristic zero, we construct a canonical complex of Galois modules of length $3$ and a canonical
isomorphism between an hypercohomology group of this complex and an
explicit subgroup of the Brauer group of $X$. This result is obtained
as a consequence of a formula describing the ``algebraic Brauer group of a
torsor'', and it generalizes recent results by Borovoi and van Hamel,
by considering non-linear groups $G$ and by taking into account some
transcendental elements in the Brauer group of $X$. 
\end{abstract}

\selectlanguage{french}

\begin{abstract}
\'Etant donn\'e un espace homog\`ene $X$ d'un groupe alg\'ebrique
connexe $G$ (\`a stabilisateurs connexes) sur un corps $k$ de
 caract\'eristique nulle, on construit un complexe de modules galoisiens
 de longueur $3$ et un isomorphisme canonique entre un groupe d'hypercohomologie de ce
complexe et un sous-groupe explicite du groupe de Brauer de $X$. Ce
r\'esultat est une cons\'equence d'une formule d\'ecrivant le ``groupe
de Brauer alg\'ebrique d'un torseur'', et il g\'en\'eralise des r\'esultats r\'ecents de Borovoi et van
Hamel, en consid\'erant des groupes $G$ non lin\'eaires, et en prenant
en compte certains \'el\'ements transcendants du groupe de
Brauer de $X$.
\end{abstract}

\def\pn{\par\noindent}

\bigskip

\setcounter{section}{-1}

\section{Introduction}

L'objectif principal de cet article est de comprendre le groupe de Brauer (ou au moins une partie de celui-ci) d'un espace homog\`ene d'un groupe alg\'ebrique connexe \`a stabilisateurs connexes et d'en obtenir une description explicite. L'un des int\'er\^ets de la formule d\'emontr\'ee ici r\'eside dans le fait qu'elle prenne en compte une partie des \'el\'ements transcendants du groupe de Brauer, et pas seulement le groupe de Brauer alg\'ebrique.

Pr\'ecisons quelques notations : $k$ est un corps de caract\'eristique
nulle, $\ov{k}$ une cl\^oture alg\'ebrique de $k$, $\Gamma_k$ le groupe
de Galois absolu de $k$, $G$ un $k$-groupe alg\'ebrique connexe (pas forc\'ement lin\'eaire) et $X$ un espace homog\`ene de $G$, \`a stabilisateurs g\'eom\'etriques connexes. On note $\ov{H}$ le stabilisateur (d\'efini sur $\ov{k}$ a priori) d'un point ferm\'e $\ov{x} \in X(\ov{k})$. Remarquons qu'un tel point $\ov{x}$ d\'efinit un morphisme de $\ov{k}$-vari\'et\'es $\pi : \ov{G} \to \ov{X}$. On d\'efinit alors $\Br_1(X,G)$ comme le noyau de l'application compos\'ee :
$$\Br(X) \to \Br(\ov{X}) \xrightarrow{\pi^*} \Br(\ov{G}) \, .$$
Il est clair que le groupe $\Br_1(X,G)$ contient le groupe de Brauer alg\'ebrique $\Br_1(X) := \textup{Ker}(\Br(X) \to \Br(\ov{X}))$, et il peut \^etre strictement plus gros. On notera aussi $\Br_a(X,G) := \Br_1(X,G) / \Br(k)$.

Par le th\'eor\`eme de Chevalley (voir \cite{Che} et \cite{Ros}, th\'eor\`eme 16; voir \'egalement \cite{Con} pour une preuve ``moderne''), le groupe $G$ est extension d'une vari\'et\'e ab\'elienne par un groupe lin\'eaire connexe
$$1 \to G\lin \to G \to G\abvar \to 1 \, .$$ 
On note $G\red$ le quotient de $G\lin$ par son radical unipotent, $G\ss$ le groupe d\'eriv\'e de $G\red$ et $G\sc$ le rev\^etement simplement connexe de $G\ss$. Soit $T_G$ (resp. $T_{G\sc}$) un tore maximal de $G\red$ (resp. $G\sc$). Un r\'esultat g\'en\'eral sur les espaces homog\`enes (voir \cite{BCTS}, proposition 3.1) assure que, quitte \`a remplacer $G$ par un groupe connexe $G'$, on peut supposer $\ov{H}$ lin\'eaire. Notons que les centres $Z_{\ov{H}\red}$ et $Z_{\ov{H}\sc}$ des groupes $\ov{H}\red$ et $\ov{H}\sc$ sont canoniquement d\'efinis sur $k$. Le module des caract\`eres d'un groupe alg\'ebrique $F/k$ est not\'e $\widehat{F} := \textup{Hom}_{\ov{k}-\textup{gr}}(\ov{F}, \Gm_{\ov{k}})$.
On est en mesure d'\'enoncer l'un des r\'esultats principaux :
\begin{thm}
Soit $X$ un $k$-espace homog\`ene d'un groupe alg\'ebrique connexe $G / k$, \`a stabilisateur g\'eom\'etrique lin\'eaire connexe $\ov{H}$. 
\begin{itemize}
  \item On suppose $\Pic(\ov{G}) = 0$. D\'efinissons le complexe naturel de modules galoisiens
$$\widehat{\ov{C}}_X := \left[ \widehat{G} \to \widehat{Z_{\ov{H}\red}} \to \widehat{Z_{\ov{H}\sc}} \right] \, ,$$
avec $\widehat{G}$ en degr\'e $0$. On a alors une suite exacte canonique
$$0 \to \Br_a(X,G) \to \H^2(k, \widehat{\ov{C}}_X) \to \textup{Ker}(H^3(k, \Gm) \to H^3(X, \Gm)) \, ,$$
qui induit un isomorphisme $\Br_a(X,G) \cong \H^2(k, \widehat{\ov{C}}_X)$ si $X(k) \neq \emptyset$ ou $H^3(k, \Gm) = 0$.
\item On suppose $X(k) \neq \emptyset$ (on ne suppose plus $\Pic(\ov{G}) = 0$). D\'efinissons le complexe
$$\widehat{C}'_X := \left[\widehat{T_G} \to \Pic(\ov{G}\abvar) \oplus \widehat{T_{G\sc}} \oplus \widehat{T_H} \to \widehat{T_{H\sc}} \right] \, ,$$
avec $\widehat{T_G}$ en degr\'e $0$. On a alors un isomorphisme canonique 
$$\Br_a(X,G) \cong \H^2(k, \widehat{C}'_X) \, .$$
\end{itemize}
\end{thm}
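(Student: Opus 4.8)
The plan is to deduce both assertions from a single computation attached to the $\ov H$-torsor $\pi\colon \ov G \to \ov X$. First I would introduce the complex of Galois modules $\mathcal C_X := \mathrm{fib}\big(\UPic(\ov G) \xrightarrow{\mathrm{res}} \UPic(\ov H)\big)$, the homotopy fibre (a complex in degrees $0,1,2$) of the map restricting functions and line bundles to a geometric fibre $\ov H \cong \pi^{-1}(\ov x)$. The subtle point, when $X(k)=\emptyset$, is that $\ov H$ is only defined over $\ov k$; I would equip $\UPic(\ov H)$ with a canonical Galois structure through the quasi-isomorphism $\UPic(\ov H) \simeq [\widehat{Z_{\ov H\red}} \to \widehat{Z_{\ov H\sc}}]$, whose target is Galois-canonical because $Z_{\ov H\red}$ and $Z_{\ov H\sc}$ are defined over $k$. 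This quasi-isomorphism follows from the structure of $\ov H$: one has $\widehat H = \ker(\widehat{Z_{\ov H\red}} \to \widehat{Z_{\ov H\sc}})$ and, via the central isogeny $Z_{\ov H\sc} \onto Z_{\ov H\ss}$, one identifies $\coker(\widehat{Z_{\ov H\red}} \to \widehat{Z_{\ov H\sc}}) = \Pic(\ov H)$.

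Next I would compute the cohomology of $\mathcal C_X$ by combining its fibration long exact sequence with the exact sequence of the torsor $\pi$ (Sansuc, Colliot-Th\'el\`ene--Sansuc)
$$\widehat G \to \widehat H \to \Pic(\ov X) \to \Pic(\ov G) \to \Pic(\ov H) \to \ker\!\big(\Br(\ov X) \xrightarrow{\pi^*} \Br(\ov G)\big) \to 0,$$
which yields canonical identifications $\mathcal H^0(\mathcal C_X)=\widehat X$, $\mathcal H^1(\mathcal C_X)=\Pic(\ov X)$ and $\mathcal H^2(\mathcal C_X)=\ker(\Br(\ov X) \xrightarrow{\pi^*}\Br(\ov G))$. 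The core of the argument is then the canonical exact sequence
$$0 \to \Br_a(X,G) \to \H^2(k, \mathcal C_X) \to \ker\!\big(H^3(k,\Gm) \to H^3(X,\Gm)\big),$$
an isomorphism when $X(k)\neq\emptyset$ or $H^3(k,\Gm)=0$. I would obtain it by matching the hypercohomology spectral sequence $H^p(k,\mathcal H^q(\mathcal C_X))\Rightarrow\H^{p+q}(k,\mathcal C_X)$ in total degree $2$ with the Hochschild--Serre spectral sequence of $R\Gamma(X,\Gm)$. Organising $\mathcal C_X$ by the truncation triangle $\UPic(\ov X)\to\mathcal C_X\to\mathcal H^2(\mathcal C_X)[-2]$, the contribution of $\UPic(\ov X)$ recovers the Borovoi--van Hamel description of $\Br_a(X)$ (whence the $H^3$-term), while the new piece $H^0(k,\mathcal H^2(\mathcal C_X))$ accounts for the classes of $\Br_1(X,G)$ that become trivial geometrically on $\ov G$ but not on $\ov X$ --- precisely the transcendental enlargement of $\Br_a(X)$ inside $\Br_a(X,G)$. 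A rational point of $X$ splits $H^3(k,\Gm)\to H^3(X,\Gm)$, giving the isomorphism.

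Finally I would specialise $\mathcal C_X$. For the first bullet, $\Pic(\ov G)=0$ forces $\UPic(\ov G)\simeq\widehat G[0]$; feeding this and the centre model of $\UPic(\ov H)$ into the fibre, one reads off $\mathcal C_X\simeq[\widehat G\to\widehat{Z_{\ov H\red}}\to\widehat{Z_{\ov H\sc}}]=\widehat{\ov C}_X$, and since no rational point is assumed one keeps the full exact sequence with the $H^3$-term, which collapses to an isomorphism exactly when $X(k)\neq\emptyset$ or $H^3(k,\Gm)=0$. For the second bullet, $X(k)\neq\emptyset$ makes $H$ a $k$-group, so I may instead use the maximal-torus models $\UPic(\ov G)\simeq[\widehat{T_G}\to\Pic(\ov G\abvar)\oplus\widehat{T_{G\sc}}]$ --- valid for non-linear $G$, with the summand $\Pic(\ov G\abvar)$ carrying both the abelian quotient and the possibly non-zero $\Pic(\ov G)$ --- and $\UPic(\ov H)\simeq[\widehat{T_H}\to\widehat{T_{H\sc}}]$; the fibre is then quasi-isomorphic to $\widehat C'_X$, and the rational point upgrades the sequence to the announced isomorphism $\Br_a(X,G)\cong\H^2(k,\widehat C'_X)$.

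The hard part will be the second step: making the comparison of the two spectral sequences in total degree $2$ fully canonical --- in particular checking that the relevant differentials vanish on the new term $H^0(k,\ker(\Br(\ov X)\to\Br(\ov G)))$ and that this term genuinely lifts into $\Br_1(X,G)$ --- together with, for the first bullet, the descent problem of equipping $\UPic(\ov H)$ and the map $\mathrm{res}$ with a Galois structure when $\ov H$ itself is not defined over $k$.
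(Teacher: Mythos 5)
Your formal skeleton does match the paper's: the complex you call $\mathcal{C}_X$ is, up to shift, exactly the paper's cone of $\UPic(\ov{G})\to\UPic(\ov{H})$ (the paper itself remarks that $\widehat{C}'_X[1]$ is such a cone), the computation of its cohomology via Sansuc's exact sequence, the truncation triangle through $\UPic(\ov{X})$ recovering Borovoi--van Hamel, and both specialisations are all present in the text. But the two steps you defer to ``the hard part'' are not verifications --- they \emph{are} the theorem, and the mechanism you propose for them cannot work as stated. First, a cone (or fibre) in the derived category is not functorial: putting a Galois structure on the two ends of $\mathrm{res}$ does not induce one on the fibre, and when $X(k)=\emptyset$ the embedding $\ov{H}\hookrightarrow\ov{G}$ determined by $\ov{x}$ is not $\Gamma_k$-equivariant, so you have no equivariant representative of $\mathrm{res}$ from which to form an honest complex of Galois modules. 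Second, ``matching'' the hypercohomology spectral sequence of $\mathcal{C}_X$ with the Hochschild--Serre spectral sequence of $X$ presupposes a morphism inducing the comparison, and your data provide none: Hochschild--Serre over $\ov{X}$ sees only $\UPic(\ov{X})$, i.e.\ $\Br_a(X)$, and no abstract comparison of $E_2$-pages can show that $H^0\bigl(k,\ker(\Br(\ov{X})\to\Br(\ov{G}))\bigr)$ lifts into $\Br(X)$ --- which is precisely the transcendental content of $\Br_a(X,G)$.

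The idea missing from your proposal, which fills exactly these two holes, is a \emph{non-abelian descent} step. Set $\ov{H}':=\ov{H}\red/Z_{\ov{H}\red}$ and $\ov{Z}:=(\ov{G}/\ov{H}\uu)/Z_{\ov{H}\red}$, a torsor under $\ov{H}'$; since $\ov{H}'$ is reductive with trivial centre, $H^2(k,L_{\ov{Z}/X})$ consists of a single \emph{neutral} class (\cite{Dou}, \cite{BorDuke}), so $\ov{Z}\to\ov{X}$ descends to a $k$-torsor $p:Z\to X$ under a $k$-form $H'$ with $\widehat{H'}=0$. This gives $p_*\Gm_Z=\Gm_X$ and $s:\Pic(\ov{H'})\isoto\Pic'(\ov{Z}/\ov{X})$ (proposition \ref{prop qis caracteres}, corollaire \ref{cor Leray 2}), whence the honest complex of Galois modules $C_{\ov{G}/X}=[\ov{k}(Z)^*/\ov{k}^*\to\Div(\ov{Z})\to\Pic(\ov{H'})]$: this, and not a formal fibre, is the canonical Galois model you need. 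The bridge to $\Br(X)$ is then genuinely $k$-geometric: lemma \ref{lem categ degre 2} applied to $0\to\Gm_X\to p_*\mathcal{K}_Z^*\to p_*\mathcal{D}iv_Z\to R^1p_*\Gm_Z\to 0$ produces the morphism $\UPic'(p)\to\tau_{\leq 2}\R{p_X}_*\Gm_X$ that replaces your spectral-sequence matching (th\'eor\`eme \ref{theo torseurs}), and the lifting you worry about is the equality $\ker(\Br(X)\xrightarrow{p^*}\Br(\ov{Z}))=\Br_1(X,G)$, which rests on the surjectivity of $\Pic(\ov{H'})\to\Pic(\ov{H})$ (lemme \ref{lem surj Pic}). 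Note finally that for the second bullet you invoke the model $\UPic(\ov{G})\simeq[\widehat{T_G}\to\Pic(\ov{G}\abvar)\oplus\widehat{T_{G\sc}}]$ for non-linear $G$ as if available: it is th\'eor\`eme \ref{theo qis} of this very paper (proved via equivariant Picard complexes and lemme \ref{lem Pic T G}), not prior literature, so your write-up would owe a proof of that as well.
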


Remarquons au passage que l'hypoth\`ese $H^3(k, \Gm) = 0$ dans la premi\`ere partie est v\'erifi\'ee par exemple si $k$ est un corps de nombres, un corps $p$-adique, ou un corps de fonctions d'une courbe sur un corps de nombres ou sur un corps $p$-adique. On voit aussi qu'il suffit que le morphisme $H^3(k, \Gm) \to H^3(X, \Gm)$ soit injectif pour avoir un isomorphisme, ce qui est par exemple le cas lorsque $X$ a un z\'ero-cycle de degr\'e $1$.

Ce r\'esultat s'inscrit \`a la suite d'un ensemble de travaux sur le groupe de Brauer des groupes alg\'ebriques et de leurs espaces homog\`enes. En 1981, Sansuc d\'emontre (voir \cite{San}, lemme 6.9) une formule d\'ecrivant le groupe $\Br_a(X)$ quand $X$ est un tore ou un groupe semi-simple. Dans \cite{BvH}, Borovoi et van Hamel introduisent le complexe $\UPic(\ov{X})$ et g\'en\'eralisent les r\'esultats de Sansuc dans le cas o\`u $X$ est un groupe lin\'eaire connexe (voir \cite{BvH}, corollaire 7), reformulant par l\`a m\^eme un r\'esultat de Kottwitz (voir \cite{Kot}, 2.4). Puis les m\^emes auteurs calculent le groupe $\Br_a(X)$ pour un espace homog\`ene d'un groupe lin\'eaire connexe : voir \cite{BvH2}, corollaire 3.2 et \cite{BvHprep}, th\'eor\`eme 7.2. Mentionnons \'egalement que Harari et Szamuely construisent dans \cite{HSz2}, section 4, un morphisme canonique $\iota : \H^1(k, M^*) \to \Br_a(X)$ compatible avec l'obstruction de Brauer-Manin, o\`u $X$ est un torseur sous une vari\'et\'e semi-ab\'elienne $S$ et $M^*$ est le $1$-motif dual de $S$. Enfin, on peut citer un r\'esultat concernant le groupe de Brauer non ramifi\'e d'un espace homog\`ene : Colliot-Th\'el\`ene et Kunyavski\u{\i} ont obtenu une formule d\'ecrivant le groupe de Brauer alg\'ebrique d'une compactification lisse d'un espace homog\`ene \`a stabilisateurs connexes (voir \cite{CTK}, th\'eor\`eme A).

Le th\'eor\`eme principal de ce texte est donc \`a la fois une unification et une g\'en\'eralisation de tous ces r\'esultats (hormis celui concernant le groupe de Brauer non ramifi\'e). L'int\'er\^et principal de cette g\'en\'eralisation est la prise en compte de certains \'el\'ements transcendants du groupe de Brauer, contrairement aux r\'esultats mentionn\'es qui se limitaient \`a $\Br_a(X)$. Outre l'int\'er\^et th\'eorique de d\'ecrire l'ensemble du groupe $\Br(X)$, l'une des motivations pour l'introduction et l'\'etude du groupe $\Br_a(X,G)$ r\'eside dans les r\'esultats r\'ecents obtenus par Borovoi et l'auteur dans \cite{BD} \`a propos du d\'efaut d'approximation forte dans les espaces homog\`enes sur les corps de nombres (voir notamment le th\'eor\`eme 1.4 de \cite{BD}). Le groupe $\Br_a(X,G)$ y appara\^it en effet de fa\c con naturelle comme le sous-groupe minimal de $\Br(X)$ permettant de d\'ecrire l'adh\'erence des points rationnels dans l'ensemble des points ad\'eliques de $X$, \`a l'aide de l'obstruction de Brauer-Manin. En effet, si le groupe de Brauer alg\'ebrique est suffisant pour l'obstruction de Brauer-Manin au principe de Hasse et \`a l'approximation faible sur $X$, ce n'est pas le cas pour l'obstruction \`a l'approximation forte et au principe de Hasse pour les points entiers, o\`u une obstruction transcendante est en g\'en\'eral n\'ecessaire (voir \cite{BD} contre-exemple 1.6 et \cite{CTX} remarque 2.11). Ainsi le th\'eor\`eme \'enonc\'e dans cette introduction, en regard du r\'esultat principal de \cite{BD}, donne-t-il une formule explicite et calculable, en termes de l'hypercohomologie d'un complexe de modules galoisiens, pour le d\'efaut d'approximation forte dans un espace homog\`ene sur un corps de nombres. 

Mentionnons \'egalement que l'on obtient au passage dans ce texte des
r\'esultats g\'en\'eraux concernant le ``groupe de Brauer alg\'ebrique
des torseurs'' : si $\pi : Y \xrightarrow{H} X$ est un torseur sous un
groupe lin\'eaire connexe $H$, on donne une formule (voir section
\ref{section torseurs}, th\'eor\`eme \ref{theo torseurs k} et ses
corollaires) d\'ecrivant le groupe $\Br_1(X,Y) := \textup{Ker}(\Br(X)
\xrightarrow{\pi^*} \Br(Y) \to \Br(\ov{Y}))$. Cette formule est une
g\'en\'eralisation naturelle des r\'esultats classiques de Sansuc sur le
groupe de Brauer des torseurs.

\subsection*{Remerciements} 
Je remercie chaleureusement Mikhail Borovoi, Jean-Louis
Colliot-Th\'el\`ene et David Harari pour leurs pr\'ecieux commentaires
sur cet article.

\section{Pr\'eliminaires sur les groupes de Picard et de Brauer des
  torseurs}

Dans tout ce texte, sauf mention explicite du contraire, on entend par ``cat\'egorie d\'eriv\'ee'' la cat\'egorie d\'eriv\'ee associ\'ee \`a la cat\'egorie des complexes born\'es de modules galoisiens sur $k$. Dans tout le texte, \'etant donn\'e un sch\'ema $X$, on consid\`ere toujours le site \'etale sur $X$, les faisceaux consid\'er\'es sont des faisceaux \'etales sur $X$ et la cohomologie consid\'er\'ee est la cohomologie \'etale de $X$.

\subsection{Un complexe associ\'e \`a un morphisme}
\label{subsection morphisme}
On construit ici un complexe de modules galoisiens, de longueur $3$,
associ\'e \`a un morphisme de $k$-vari\'et\'es alg\'ebriques $\pi : Y \to X$. Ce complexe est crucial en vue
du calcul du groupe de Brauer dans la section \ref{section Brauer torseurs descente}.

Soit $\pi : Y \to X$ un morphisme de $k$-vari\'et\'es alg\'ebriques lisses et g\'eom\'etriquement int\`egres.
On consid\`ere la suite exacte habituelle de faisceaux (\'etales) sur $Y$ (voir \cite{GBr}, II.1, suite exacte (2)) : 
\begin{equation}
\label{sec Y}
0 \to \Gm_Y \to \mathcal{K}_Y^* \to \mathcal{D}iv_Y \to 0 \, .
\end{equation}
On applique le foncteur $\pi_*$ \`a cette suite exacte. On obtient la
suite exacte suivante :
$$0 \to {\pi}_* \Gm_Y \to {\pi}_* \mathcal{K}_Y^* \to \pi_* \mathcal{D}iv_Y \to
R^1 \pi_* \Gm_Y \to R^1 \pi_* \mathcal{K}_Y^* \, .$$
Or le th\'eor\`eme de Hilbert 90 assure que $R^1 {\pi}_* \mathcal{K}_Y^* = 0$ (voir \cite{GBr}, II, lemme 1.6), et donc on a une suite exacte
\begin{equation}
\label{sel base}
0 \to {\pi}_* \Gm_Y \to {\pi}_* \mathcal{K}_Y^* \to {\pi}_* \mathcal{D}iv_Y \to
R^1 \pi_* \Gm_Y \to 0 \, .
\end{equation}
Notons $\mathcal{Q}$ le conoyau du morphisme $\pi_* \Gm_Y \to \pi_*
\mathcal{K}_Y^*$, et appliquons le foncteur ${p_X}_*$ \`a cette suite exacte
(o\`u $p_X : X \to \textup{Spec}(k)$ est le morphisme structural de $X$). On obtient les suites exactes suivantes (en
notant que $p_Y = p_X \circ \pi$) :
$$0 \to {p_Y}_* \Gm_Y \to {p_Y}_* \mathcal{K}_Y^* \to {p_X}_*
\mathcal{Q} \to R^1 {p_X}_* \pi_* \Gm_Y \to  R^1 {p_X}_* \pi_*
\mathcal{K}_Y^*$$
$$R^1 {p_X}_* \pi_* \mathcal{K}_Y^* \to R^1 {p_X}_* \mathcal{Q} \to R^2
{p_X}_* \pi_* \Gm_Y \to R^2 {p_X}_* \pi_* \mathcal{K}_Y^*$$
$$0 \to {p_X}_* \mathcal{Q} \to {p_Y}_* \mathcal{D}iv_Y \to {p_X}_*
R^1 \pi_* \Gm_Y \to R^1 {p_X}_* \mathcal{Q} \, .$$
Or le faisceau $R^1 {p_X}_* \pi_* \mathcal{K}_Y^*$ est nul car il s'injecte dans $R^1 {p_Y}_*
\mathcal{K}_Y^* = 0$ par un argument de suite spectrale, donc les
suites pr\'ec\'edentes se r\'e\'ecrivent ainsi :
\begin{equation}
\label{sel1}
0 \to \overline{k}[Y]^* \to \overline{k}(Y)^* \to {p_X}_*
\mathcal{Q} \to R^1 {p_X}_* \pi_* \Gm_Y \to 0
\end{equation}
\begin{equation}
\label{sel2}
0 \to R^1 {p_X}_* \mathcal{Q} \to R^2 {p_X}_* \pi_* \Gm_Y \to R^2
{p_X}_* \pi_* \mathcal{K}_Y^*
\end{equation}
\begin{equation}
\label{sel3}
0 \to {p_X}_* \mathcal{Q} \to \Div(\overline{Y}) \to {p_X}_* R^1 \pi_*
\Gm_Y \to R^1 {p_X}_* \mathcal{Q} \, .
\end{equation}
Ces suites exactes permettent de d\'efinir le complexe suivant (o\`u $\overline{k}(Y)^*/\overline{k}^*$ est en degr\'e $0$) 
$$\textup{UPic}(\pi : Y \to X) := [\overline{k}(Y)^*/\overline{k}^* \xrightarrow{\Delta}
\Div(\overline{Y}) \xrightarrow{\partial} H^0(\overline{X}, R^1 \pi_*
\Gm_Y)] \, ,$$
o\`u le morphisme $\Delta$ est le morphisme $f \mapsto
\textup{div}(f)$, \'egal \`a la compos\'ee des morphismes suivants
apparaissant dans les suites exactes (\ref{sel1}) et (\ref{sel3}) : $\overline{k}(Y)^* \to {p_X}_* \mathcal{Q} \to \Div(\overline{Y})$,
et le morphisme $\partial$ apparait dans la suite exacte
(\ref{sel3}). Notons enfin que le groupe $H^0(\overline{X}, R^1 \pi_*
\Gm_Y)$ est appel\'e le groupe de Picard relatif de $\ov{Y}$ sur
$\ov{X}$, not\'e $\Pic'(\ov{Y}/\ov{X})$ (voir \cite{BLR}, chapitre 8 :
le groupe $\Pic'(\ov{Y}/\ov{X})$ est le groupe des sections globales sur
$\ov{X}$ du foncteur de Picard relatif $\Pic_{\ov{Y} / \ov{X}}$).

Le lemme suivant est alors \'evident (on rappelle que $\UPic(\ov{Y})$
est repr\'esent\'e par $[\ov{k}(Y)^*/\ov{k} \to \Div(\ov{Y})]$ : voir
\cite{BvH}, corollaire 2.5) :
\begin{lem}
On a un triangle exact canonique dans la cat\'egorie d\'eriv\'ee des
 modules galoisiens :
$$\Pic'(\ov{Y}/\ov{X})[-2] \to \UPic(\pi : Y \to X) \to \UPic(\ov{Y})
\to \Pic'(\ov{Y}/\ov{X})[-1] \, .$$
\end{lem}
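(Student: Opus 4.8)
The plan is to realize the triangle as the image in the derived category of a tautological short exact sequence of complexes, so that the statement becomes a formal consequence of the general principle that every short exact sequence of complexes of Galois modules induces a distinguished triangle. The starting observation is that, by the very construction preceding the lemma, the degree-$2$ term $H^0(\overline{X}, R^1\pi_*\Gm_Y)$ of $\UPic(\pi : Y \to X)$ is \emph{by definition} the relative Picard group $\Pic'(\overline{Y}/\overline{X})$, whereas the two lower terms $\overline{k}(Y)^*/\overline{k}^*$ in degree $0$ and $\Div(\overline{Y})$ in degree $1$, joined by the differential $\Delta : f \mapsto \mathrm{div}(f)$, are precisely the terms and the differential of the two-term complex $\UPic(\overline{Y})$ recalled just above.

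First I would record that the term-by-term comparison is compatible with the differentials, so that it is an identity of complexes and not merely of graded objects. The only point to check is that the differential $\Delta$ of $\UPic(\pi : Y \to X)$ coincides with the differential of $\UPic(\overline{Y})$; but $\Delta$ is by definition the divisor map, being the composite $\overline{k}(Y)^* \to {p_X}_* \mathcal{Q} \to \Div(\overline{Y})$ coming from the exact sequences (\ref{sel1}) and (\ref{sel3}), which is exactly the map $f \mapsto \mathrm{div}(f)$ defining $\UPic(\overline{Y})$. Hence $\UPic(\overline{Y})$ is the stupid truncation of $\UPic(\pi : Y \to X)$ in degrees $\leq 1$, while $\Pic'(\overline{Y}/\overline{X})[-2]$, the complex concentrated in degree $2$, is the complementary stupid truncation in degrees $\geq 2$, which is a subcomplex since it is the top term.

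This produces the canonical short exact sequence of complexes of Galois modules
$$0 \to \Pic'(\overline{Y}/\overline{X})[-2] \to \UPic(\pi : Y \to X) \to \UPic(\overline{Y}) \to 0,$$
in which the first map is the inclusion of the degree-$2$ term and the second is the projection onto the degrees $\leq 1$. Viewing it in the derived category, it is a distinguished triangle whose connecting morphism is a map $\UPic(\overline{Y}) \to \Pic'(\overline{Y}/\overline{X})[-2][1] = \Pic'(\overline{Y}/\overline{X})[-1]$; this is exactly the triangle of the statement, and all its morphisms are canonical because each of the identifications above is canonical. There is essentially no hard step here, which is why the lemma can be called immediate: the only substantive inputs are the identification $H^0(\overline{X}, R^1\pi_*\Gm_Y) = \Pic'(\overline{Y}/\overline{X})$ and the fact that $\Delta$ is the divisor map, both of which are already built into the construction, after which the triangle drops out formally.
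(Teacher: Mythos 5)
Your proof is correct and is exactly the argument the paper intends: the paper declares this lemma \emph{\'evident} after recalling that $\UPic(\ov{Y})$ is represented by $[\ov{k}(Y)^*/\ov{k}^* \to \Div(\ov{Y})]$, and the implicit justification is precisely your observation that $\UPic(\pi : Y \to X)$ has this complex as its quotient by the degree-$2$ subcomplex $\Pic'(\ov{Y}/\ov{X})[-2]$, the resulting short exact sequence of complexes (the stupid truncation) giving the distinguished triangle. Your verification that the differential $\Delta$ is the divisor map, hence matches that of $\UPic(\ov{Y})$, is the only point of substance, and you handle it as the paper does.
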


\subsection{Application au cas des torseurs}
\label{section donnee descente}
Par convention, et sauf mention explicite du contraire, les torseurs consid\'er\'es sont des \emph{torseurs \`a droite}.

Soit $k$ un corps de caract\'eristique nulle, $X$ une $k$-vari\'et\'e lisse et g\'eom\'etriquement int\`egre. Soit $\ov{H}$ un $\ov{k}$-groupe alg\'ebrique lin\'eaire connexe et $\ov{\pi} : \ov{Y}
\xrightarrow{\ov{H}} \ov{X}$ un $\ov{X}$-torseur sous $\ov{H}$. 

On rappelle ici la d\'efinition d'une donn\'ee de recollement sur un tel
torseur (voir par exemple \cite{HSk}, d\'efinition 2.1, o\`u cette notion est appel\'ee
``donn\'ee de descente''). On renvoie \`a \cite{HSk} pour les notations :
\begin{defi}
\label{defi rec}
Une donn\'ee de recollement sur $\ov{Y}/X$ est la donn\'ee d'un
 sous-groupe topologique $E$ de $\SAut(\ov{Y}/X)$ s'int\'egrant dans un
 diagramme commutatif exact de groupes topologiques
\begin{displaymath}
\xymatrix{
1 \ar[r] & \ov{H}(\ov{k}) \ar[r] \ar[d]^i & E \ar[r] \ar[d] & \Gamma_k
 \ar[r] \ar[d] & 1 \\
1 \ar[r] & \Aut(\ov{Y}/\ov{X}) \ar[r] & \SAut(\ov{Y}/X) \ar[r] &
 \Gamma_k &
}
\end{displaymath}
o\`u la ligne sup\'erieure est localement scind\'ee et le morphisme $i$
 est le morphisme naturel.
\end{defi}

\`A partir de maintenant, \emph{on suppose que $\ov{Y} / X$ est muni d'une donn\'ee de recollement.}

%Pour la notion de donn\'ee de descente, on renvoie notamment \`a \cite{HSk}, section 2. Par la remarque 2.3 de \cit%e{HSk}, cette notion de donn\'ee de descente coincide avec celle de l'appendice de \cite{DM}.

Dans cette section, on associe \`a la donn\'ee du torseur $\ov{Y} / X$, muni de sa donn\'ee de recollement, un complexe de modules galoisiens not\'e $C_{\ov{Y}/X}$, que l'on relie \`a un complexe de la forme $\UPic(\pi : Z \to X)$, o\`u $Z/X$ est un torseur (d\'efini sur $k$) associ\'e \`a $\ov{Y}/X$.

Notons $\ov{H}\uu$ le radical unipotent de $\ov{H}$, et $\ov{H}\red := \ov{H} / \ov{H}\uu$. D\'efinissons le groupe $\ov{H'} := \ov{H}\red / Z_{\ov{H}\red}$ et les quotients $\ov{Z'} := \ov{Y} / \ov{H}\uu$ et $\ov{Z} := \ov{Z'} / Z_{\ov{H}\red}$, de sorte que l'on ait un diagramme commutatif de torseurs sur $\ov{X}$ :
\begin{displaymath}
\xymatrix{
\ov{Y} \ar[r]^{\ov{H}\uu} \ar[ddr]_{\ov{H}} & \ov{Z'} \ar[dd]^{\ov{H}\red} \ar[rd]^{Z_{\ov{H}\red}} & \\
& & \ov{Z} \ar[ld]^{\ov{H'}} \\
& \ov{X} & \, .
}
\end{displaymath}

Remarquons au passage que les quotients $\ov{Z'}$ et $\ov{Z}$ sont repr\'esentables par des $\ov{k}$-vari\'et\'es (voir par exemple \cite{Mil}, th\'eor\`eme III.4.3.(a)).

Suivant \cite{HSk}, proposition 2.2, on peut associer \`a une telle donn\'ee de recollement un $k$-lien $L_{\ov{Y}/X}$ sur le groupe $\ov{H}$, ainsi qu'une classe $\eta_{\ov{Y}/X} \in H^2(k,L_{\ov{Y}/X})$, qui est l'obstruction \`a descendre le torseur $\ov{Y} \to \ov{X}$ en un torseur sur $X$ : cette classe est neutre si et seulement si le torseur $\ov{Y} / \ov{X}$ descend sur $X$.

Le sous-groupe $\ov{H}\uu$ de $\ov{H}$ est invariant par les semi-automorphismes de $\ov{H}$, donc le lien $L_{\ov{Y}/X}$ induit un lien
$L_{\ov{Z'}/X}$ sur $\ov{H}\red$. De m\^eme, $L_{\ov{Z'}/X}$ induit un lien $L_{\ov{Z}/X}$ sur $\ov{H'}$. Or $\ov{H'}$ est r\'eductif et de centre trivial, donc par la proposition 1.1 de \cite{Dou} ou la proposition 3.1 de \cite{BorDuke}, l'ensemble $H^2(k,L_{\ov{Z}/X})$ est r\'eduit \`a un seul \'el\'ement, qui est une classe neutre. Donc l'image $\eta'_{\ov{Y}/X} \in H^2(k, L_{\ov{Z}/X})$ de la classe $\eta_{\ov{Y}/X} \in H^2(k, L_{\ov{Y}/X})$ est neutre.

Or par fonctorialit\'e, la classe $\eta'_{\ov{Y}/X}$ co\"incide avec la
classe associ\'ee \`a la donn\'ee de recollement induite sur $\ov{Z} \to \ov{X}$. Par cons\'equent, puisque $\eta'_{\ov{Y}/X}$ est neutre, il existe une $k$-forme $H'$ de $\ov{H'}$ et un torseur $p : Z \xrightarrow{H'} X$ qui devient isomorphe \`a $\ov{p} : \ov{Z} \xrightarrow{\ov{H'}} \ov{X}$ quand on \'etend les scalaires \`a $\ov{k}$.

Gr\^ace au lemme 5.2.(ii) de \cite{BvH}, on dispose d'un morphisme canonique $\varphi : \UPic(\ov{Z}) \to \UPic(\ov{H'})$. Ce morphisme est repr\'esentable par le morphisme de complexes horizontaux :
\begin{displaymath}
\xymatrix{
\ov{k}(Z)^* / \ov{k}^* \ar[r] \ar[d] & \textup{Div}(\ov{Z}) \ar[d]^{\varphi'} \\
0 \ar[r] & \Pic(\ov{H'}) \, ,
}
\end{displaymath}
via le quasi-isomorphisme canonique $\UPic(\ov{H'}) \to \Pic(\ov{H'})[-1]$, 
o\`u $\varphi'$ est la compos\'ee
$$\textup{Div}(\ov{Z}) \to \Pic(\ov{Z}) \xrightarrow{\varphi} \Pic(\ov{H'}) \, .$$

D\'efinissons l'objet $C_{\ov{Y}/X}$ comme le complexe de modules galoisiens
$$C_{\ov{Y}/X} := [\ov{k}(Z)^* / \ov{k}^* \to \textup{Div}(\ov{Z})
\xrightarrow{\varphi'} \Pic(\ov{H'})] \, ,$$
o\`u $\ov{k}(Z)^* / \ov{k}^*$ est en degr\'e $0$ et $\Pic(\ov{H'})$ en degr\'e $2$.
Par construction, $C_{\ov{Y}/X}[1]$ est un c\^one du morphisme $\varphi : \UPic(\ov{Z}) \to \UPic(\ov{H'})$ dans la cat\'egorie d\'eriv\'ee : on a un triangle exact
$$ C_{\ov{Y}/X} \to \UPic(\ov{Z}) \to \UPic(\ov{H'}) \to C_{\ov{Y}/X}[1] \, .$$

On souhaite maintenant d\'efinir un morphisme naturel $C_{\ov{Y}/X} \to \UPic(\pi : Z \to X)$.

\subsubsection{Construction g\'en\'erale}
\label{section W/X}
Soit $f : V \xrightarrow{G} X$ un $X$-torseur sous un $k$-groupe lin\'eaire connexe $G$. Construisons un morphisme canonique 
$$s_{V/X} : \Pic(G) \to \Pic'(V/X) \, .$$
Soit une classe $p \in \Pic(G)$ repr\'esent\'ee par un torseur $f : P \to G$ sous $\Gm$. Par d\'efinition, on a un isomorphisme naturel de la forme $\phi : V \times G \to V
\times_X V$. On d\'efinit alors $W$ comme le tir\'e en arri\`ere
du torseur $f_V : V \times P \to V \times G$ sous $\Gm$ (d\'eduit de $P \to G$
par changement de base par la projection $V \times G \to G$) par le
morphisme $\phi^{-1} : V \times_X V \to V \times G$. On obtient ainsi
un torseur $W \to V \times_X V$ sous $\Gm$, qui d\'efinit bien une
classe dans $\Pic'(V/X)$ (voir \cite{BLR} p.202 pour
une description explicite des \'el\'ements de $\Pic'(V/X)$). On note alors par d\'efinition $s_{V/X}(p)$ la
classe de $W \to V \times_X V$ dans $\Pic'(V/X)$, et on v\'erifie que
cette classe ne d\'epend pas du repr\'esentant $P$ de $p$ choisi.

Le morphisme $s_{V/X}$ induit ainsi un diagramme canonique 
\begin{equation}
\label{diag UPic Pic'}
\xymatrix{
\Pic(V) \ar[r]^{\varphi} \ar[d]^= & \Pic(G) \ar[d]^{s_{V/X}} \\
\Pic(V) \ar[r]^{\partial} & \Pic'(V/X) \, .
}
\end{equation}

\begin{lem}
\label{lem diag UPic comm}
Le diagramme (\ref{diag UPic Pic'}) est commutatif.
\end{lem}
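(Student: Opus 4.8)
The plan is to compute all three maps of the square as explicit classes of line bundles on the fibre product $V \times_X V$, and to check that the two composites agree there. The common tool is the canonical trivialisation isomorphism of the torsor, $\phi : V \times G \isoto V \times_X V$, $(v,g) \mapsto (v, v\cdot g)$, together with the description of $\Pic'(V/X) = H^0(\ov X, R^1 f_* \Gm_V)$ recalled from \cite{BLR}, p.~202, under which a section of the relative Picard functor is represented by a line bundle on $V \times_X V$ (equipped with the usual cocycle datum on $V\times_X V\times_X V$). I record at once the compatibilities $\mathrm{pr}_1 \circ \phi = \mathrm{pr}_V$ and $\mathrm{pr}_2 \circ \phi = a$, where $a : V \times G \to V$ is the action and $\mathrm{pr}_V, \mathrm{pr}_G$ the two projections of $V\times G$; equivalently, writing $q := \mathrm{pr}_G \circ \phi^{-1} : V \times_X V \to G$ for the ``division'' morphism $(v_1,v_2)\mapsto v_1^{-1}v_2$, one gets $a \circ \phi^{-1} = \mathrm{pr}_2$ and $\mathrm{pr}_V \circ \phi^{-1} = \mathrm{pr}_1$.

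Next I would unwind the three arrows as follows. First, the natural map $\partial : \Pic(V) \to \Pic'(V/X)$ (the edge morphism of the Leray spectral sequence for $f$) sends $L$ to the class of $\mathrm{pr}_2^* L \otimes \mathrm{pr}_1^* L^{-1}$ on $V \times_X V$. Second, by its very construction $s_{V/X}$ sends a class represented by a $\Gm$-torsor $P \to G$ to the pullback along $\phi^{-1}$ of $\mathrm{pr}_G^* P$; at the level of line bundles this reads $s_{V/X}(M) = [\,q^* M\,] = [\,(\phi^{-1})^* \mathrm{pr}_G^* M\,]$ for $M \in \Pic(G)$. Third, the morphism $\varphi : \Pic(V) \to \Pic(G)$ coming from \cite{BvH}, lemme 5.2.(ii), is characterised on Picard groups by the identity $\mathrm{pr}_G^* \varphi(L) = a^* L \otimes \mathrm{pr}_V^* L^{-1}$ in $\Pic(V \times G)$: indeed $\varphi$ is induced by the difference $a^* - \mathrm{pr}_V^*$ followed by the Künneth splitting $\UPic(V \times G) \cong \UPic(V) \oplus \UPic(G)$, valid because $G$ is connected and linear, and the $V$-component of $a^* L \otimes \mathrm{pr}_V^* L^{-1}$ vanishes since this class restricts trivially to $V \times \{e\}$.

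The commutativity is then a one-line computation: applying $(\phi^{-1})^*$ to the defining identity for $\varphi$ gives
$$s_{V/X}(\varphi(L)) = (\phi^{-1})^* \mathrm{pr}_G^* \varphi(L) = (\phi^{-1})^*\bigl(a^* L \otimes \mathrm{pr}_V^* L^{-1}\bigr) = (a\circ\phi^{-1})^* L \otimes (\mathrm{pr}_V\circ\phi^{-1})^* L^{-1} = \mathrm{pr}_2^* L \otimes \mathrm{pr}_1^* L^{-1} = \partial(L),$$
using the compatibilities of $\phi$ with the two projections. Since the left vertical arrow is the identity, this shows that the square commutes.

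The part requiring genuine care — the rest being formal — is the bookkeeping of conventions that makes these three descriptions land in the \emph{same} model of $\Pic'(V/X)$ with matching orientations: one must check that the BLR realisation of $\Pic'(V/X)$ as line bundles on $V \times_X V$ is the one for which $\partial(L) = [\mathrm{pr}_2^* L \otimes \mathrm{pr}_1^* L^{-1}]$ and $s_{V/X}(M) = [q^* M]$ \emph{simultaneously} (a transpose or sign error here would flip both sides consistently, but must be ruled out), and that the identity $\mathrm{pr}_G^* \varphi(L) = a^* L \otimes \mathrm{pr}_V^* L^{-1}$ holds on the nose, i.e. that no mixed term survives in the Künneth decomposition for the connected linear group $G$. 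All of this is geometric, so no Galois-descent subtlety intervenes at this stage.
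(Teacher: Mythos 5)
Your proof is correct, and its two key ingredients are the same as in the paper's argument: the characterisation of $\varphi$ by the identity $\mathrm{pr}_G^*\varphi(L) = m^* L \otimes \mathrm{pr}_V^* L^{-1}$ in $\Pic(V\times G)$, obtained from Sansuc's splitting $\Pic(V\times G)\cong\Pic(V)\oplus\Pic(G)$ (\cite{San}, lemme 6.6) by restricting to $V\times\{e\}$, and the unwinding of $s_{V/X}$ as $(\phi^{-1})^*\circ\mathrm{pr}_G^*$. The difference lies in how the comparison in $\Pic'(V/X)$ is concluded. You work directly on $V\times_X V$, absorbing the discrepancy $\mathrm{pr}_1^* L^{-1}$ into a normalised representative of the edge map, so that $s_{V/X}(\varphi(L)) = \mathrm{pr}_2^* L \otimes \mathrm{pr}_1^* L^{-1} = \partial(L)$ holds on the nose; the cost, which you flag but do not discharge, is the verification that in the BLR model of $\Pic'(V/X)$ the edge map really is $L \mapsto [\mathrm{pr}_2^* L \otimes \mathrm{pr}_1^* L^{-1}]$ while \emph{simultaneously} $s_{V/X}(M) = [q^* M]$, with a consistent orientation of $\phi$. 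The paper avoids having to fix this normalisation: it represents $\partial(p)$ by the torsor $Z \to V$ itself and $s_{V/X}(\varphi(p))$ by $U \to V\times_X V$, then pulls both back to $Z\times_X V$, where the discrepancy $\mathrm{pr}_1^*[Z]$ dies because the pullback of $Z\to V$ along itself is trivial; a second application of Sansuc's splitting, now to $\Pic(Z\times G)\cong\Pic(Z)\oplus\Pic(G)$, shows that both pullbacks correspond to $(0,[W])$, i.e. the torsors $Z''$ and $U'$ are honestly isomorphic over $Z\times_X V$. So your route is shorter --- one fibre product and one application of the splitting instead of two --- but it concentrates the entire burden on the convention-matching step, whereas the paper's detour through $Z\times_X V$ is precisely what renders that bookkeeping unnecessary. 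Underneath, the computation is identical: both sides become $(0,[W])$ in $\Pic(V)\oplus\Pic(G)$.
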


\begin{proof}
On commence par rappeler la d\'efinition de $\varphi$. Si $m : V \times G \to V$ d\'esigne l'action de $G$, alors $\varphi$ est la compos\'ee des morphismes suivants :
$$\Pic(V) \xrightarrow{m^*} \Pic(V \times G) \xleftarrow{\cong}
\Pic(V) \oplus \Pic(G) \rightarrow \Pic(G) \, ,$$
(voir \cite{San}, lemme 6.6 pour l'isomorphisme central).
Soit $f : Z \to V$ un torseur sous $\Gm$ de classe $p \in
\Pic(V)$. Par d\'efinition de $\varphi$, il existe un torseur $W \to
G$ sous $\Gm$ (de classe $[W] = \varphi(p)$ dans $\Pic(G)$) et un
isomorphisme de $V \times G$-torseurs sous $\Gm$ entre $Z' := m^* Z$
et le quotient de $Z \times W$ par l'action diagonale de $\Gm$. Par
cons\'equent, $s_{V/X}(\varphi(p)) \in \Pic'(V/X)$ est repr\'esent\'e
par le torseur $U \xrightarrow{\Gm} V \times_X V$ d\'eduit de $V
\times W \to V \times G$ via l'isomorphisme $V \times G \cong V
\times_X V$. Quant \`a la classe $\partial(p)$, elle est repr\'esent\'ee
par le torseur $Z \xrightarrow{\Gm} V$.

On cherche donc \`a montrer que les classes des torseurs $Z
\xrightarrow{\Gm} V$ et $U \xrightarrow{\Gm} V \times_X V$
co\"incident dans $\Pic'(V/X)$. Pour cela, il suffit de montrer que
dans les diagrammes de carr\'es cart\'esiens suivants :
\begin{displaymath}
\xymatrix{
Z'' \ar[r] \ar[d] & Z' \ar[r] \ar[d] & Z \ar[d] & U' \ar[r] \ar[d] & U
\ar[d] & \\
Z \times_X V \ar[r] \ar[d] & V \times_X V \ar[r] \ar[d] & V \ar[d] &
Z \times_X V \ar[r] \ar[d] & V \times_X V \ar[d] \ar[r] & V \ar[d] \\
Z \ar[r] & V \ar[r] & X & Z \ar[r] & V \ar[r] & X
}
\end{displaymath}
les $Z \times_X V$-torseurs $Z''$ et $U'$ sont isomorphes. Il suffit
donc de montrer que $[Z''] = [U'] \in \Pic(Z \times_X V)$. Pour cela,
consid\'erons le diagramme commutatif suivant :
\begin{displaymath}
\xymatrix{
\Pic(V \times_X V) \ar[r]^{\cong} \ar[d] & \Pic(V \times G) \ar[d] &
\Pic(V) \oplus \Pic(G) \ar[l]_{\cong} \ar@<4ex>[d]^= \ar@<-4ex>[d] \\
\Pic(Z \times_X V) \ar[r]^{\cong} & \Pic(Z \times G) & \Pic(Z) \oplus
\Pic(G) \ar[l]_{\cong}  \, .
}
\end{displaymath}
Par construction, la classe $[Z'] \in \Pic(V \times G)$ correspond au
couple $([Z], [W]) \in \Pic(V) \oplus \Pic(G)$. Donc, par
fonctorialit\'e, son image $[Z'']$ dans $\Pic(Z \times G)$ correspond au
couple $(0, [W]) \in \Pic(Z) \oplus \Pic(G)$ (on rappelle que le
tir\'e en arri\`ere du torseur $Z \to V$ par lui-m\^eme est trivial
comme $Z$-torseur).

La classe $[U] \in \Pic(V \times_X V)$ correspond par construction \`a la classe $[V \times W] \in \Pic(V
\times G)$, qui elle-m\^eme correspond au couple $(0, [W]) \in \Pic(V)
\oplus \Pic(G)$. On en d\'eduit donc que la classe $[U'] \in \Pic(Z
\times_X V)$ correspond au couple $(0,[W]) \in \Pic(Z) \oplus
\Pic(G)$.

Par cons\'equent, on a montr\'e que $[Z''] = [U'] \in \Pic(Z
\times_X V)$, i.e. $s_{V/X}(\varphi(p)) = \partial(p)$.
\end{proof}

On d\'eduit imm\'ediatemment du lemme \ref{lem diag UPic comm} la construction suivante :
\begin{lem}
Soit $f : V \xrightarrow{G} X$ un torseur sous un groupe lin\'eaire connexe $G$ tel que $G\tor = 0$.
Alors le morphisme $s_{\ov{V}/\ov{X}} : \Pic(\ov{G}) \to \Pic'(\ov{V}/\ov{X})$ induit un
 morphisme naturel :
$$C_{\ov{V}/X} \to \UPic(f : V \to X) \, .$$
%o\`u $C_{V/X} := \textup{Fibre}(\UPic(\ov{V}) \xrightarrow{\varphi} \UPic(\ov{G}))$.
\end{lem}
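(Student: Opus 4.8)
The plan is to realize the desired arrow as a genuine morphism of three-term complexes of Galois modules that is the identity in degrees $0$ and $1$ and is $s_{\ov{V}/\ov{X}}$ in degree $2$, and then to reduce its only nontrivial commutativity square to Lemme \ref{lem diag UPic comm}.

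First I would make $C_{\ov{V}/X}$ explicit. Because $G$ is connected linear with $G\tor = 0$, it carries no nontrivial character, so $\widehat{G} = \ov{k}[\ov{G}]^*/\ov{k}^* = 0$; hence the complex $\UPic(\ov{G}) = [\ov{k}(G)^*/\ov{k}^* \to \Div(\ov{G})]$ has $H^0 = \widehat{G} = 0$ and $H^1 = \Pic(\ov{G})$, i.e. $\UPic(\ov{G}) \cong \Pic(\ov{G})[-1]$ in the derived category. Feeding this quasi-isomorphism into the cone construction of the preceding paragraphs, now applied to $V/X$ under $G$, shows that $C_{\ov{V}/X}$ is represented by the explicit complex
$$C_{\ov{V}/X} = [\ov{k}(V)^*/\ov{k}^* \xrightarrow{\Delta} \Div(\ov{V}) \xrightarrow{\varphi'} \Pic(\ov{G})]$$
with $\ov{k}(V)^*/\ov{k}^*$ in degree $0$, where $\Delta$ sends $f$ to $\textup{div}(f)$ and $\varphi'$ is the composite $\Div(\ov{V}) \xrightarrow{\textup{cl}} \Pic(\ov{V}) \xrightarrow{\varphi} \Pic(\ov{G})$ ($\textup{cl}$ the cycle class map, $\varphi$ Sansuc's map). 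Recall that $\UPic(f : V \to X) = [\ov{k}(V)^*/\ov{k}^* \xrightarrow{\Delta} \Div(\ov{V}) \xrightarrow{\partial} \Pic'(\ov{V}/\ov{X})]$, and that its differential $\partial$ on $\Div(\ov{V})$ factors as $\Div(\ov{V}) \xrightarrow{\textup{cl}} \Pic(\ov{V}) \xrightarrow{\partial} \Pic'(\ov{V}/\ov{X})$, the second arrow being the one appearing in diagramme (\ref{diag UPic Pic'}).

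Next I would set $\Psi : C_{\ov{V}/X} \to \UPic(f : V \to X)$ to be the identity in degrees $0$ and $1$ and $s_{\ov{V}/\ov{X}} : \Pic(\ov{G}) \to \Pic'(\ov{V}/\ov{X})$ in degree $2$; all three maps are Galois-equivariant, so $\Psi$ is a candidate morphism of complexes of Galois modules. The square in degrees $(0,1)$ commutes on the nose, both horizontal maps being $f \mapsto \textup{div}(f)$ and both vertical maps the identity. For the square in degrees $(1,2)$ I must check $s_{\ov{V}/\ov{X}} \circ \varphi' = \partial$ as maps $\Div(\ov{V}) \to \Pic'(\ov{V}/\ov{X})$. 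Both sides factor through $\textup{cl} : \Div(\ov{V}) \to \Pic(\ov{V})$, the left as $s_{\ov{V}/\ov{X}} \circ \varphi \circ \textup{cl}$ and the right as $\partial \circ \textup{cl}$, so the identity to prove reduces to $s_{\ov{V}/\ov{X}} \circ \varphi = \partial$ on $\Pic(\ov{V})$, which is precisely the commutativity of diagramme (\ref{diag UPic Pic'}), i.e. Lemme \ref{lem diag UPic comm}.

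The genuine content of the argument is thus entirely contained in Lemme \ref{lem diag UPic comm}; the only step carrying any subtlety is the opening identification of $C_{\ov{V}/X}$ with the displayed explicit complex, which is where the hypothesis $G\tor = 0$ enters, since without it the degree-$0$ term $\widehat{G}$ would not vanish and $\Pic(\ov{G})$ would not sit cleanly in degree $2$. Once both complexes are written in this form no further obstacle arises, and $\Psi$ is the required natural morphism.
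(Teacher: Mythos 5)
Your proof is correct and is exactly the paper's argument: the paper dispatches this lemma with the single remark that it follows immediately from Lemme \ref{lem diag UPic comm}, and your write-up simply makes explicit what ``immediately'' means --- identifying $C_{\ov{V}/X}$ with $[\ov{k}(V)^*/\ov{k}^* \to \Div(\ov{V}) \xrightarrow{\varphi'} \Pic(\ov{G})]$ (which is where $G\tor=0$ is used, via $\UPic(\ov{G})\cong\Pic(\ov{G})[-1]$), taking $\Psi=(\identity,\identity,s_{\ov{V}/\ov{X}})$, and reducing the one nontrivial square to the commutativity of diagram (\ref{diag UPic Pic'}) after factoring both differentials through $\Pic(\ov{V})$. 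The factorization of $\partial$ through the cycle class map, which you state explicitly, is the same compatibility the paper uses implicitly when it writes $\partial$ on $\Pic(V)$ in that diagram.
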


%Le point cl\'e pour la suite est la proposition suivante :
\begin{prop}
\label{prop qis caracteres}
Soit $f : V \xrightarrow{G} X$ un torseur sous un groupe lin\'eaire connexe $G$.
Supposons que $G\tor = 0$ (i.e. $\widehat{G} = 0$). Alors le morphisme naturel
$$\Gm_X \xrightarrow{\cong} f_* \Gm_V$$
est un isomorphisme, et le morphisme canonique 
$$C_{\ov{V}/X} \to \UPic(f : V \to X)$$
est un quasi-isomorphisme.
\end{prop}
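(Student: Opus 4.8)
Pour établir cet énoncé je traiterais séparément les deux assertions, la seconde reposant sur la première. Pour la première, le morphisme d'adjonction $\Gm_X \to f_* \Gm_V$ est injectif puisque $f$ est fidèlement plat (le tiré en arrière d'une fonction inversible reste inversible, donc non nul) ; pour la surjectivité, je vérifierais l'assertion sur les germes aux points géométriques. Comme $G$ est lisse, le torseur $V \to X$ se trivialise au-dessus de l'hensélisé strict $\mathcal{O}_{X,\ov x}^{\mathrm{sh}}$ en tout point géométrique $\ov x$ de $X$, de sorte que $V \times_X \Spec \mathcal{O}_{X,\ov x}^{\mathrm{sh}} \cong G \times \Spec \mathcal{O}_{X,\ov x}^{\mathrm{sh}}$. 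L'hypothèse $\widehat{G} = 0$ et le théorème de Rosenlicht (sous la forme de \cite{San}, lemme 6.5) assurent alors que les fonctions inversibles de ce produit se réduisent à celles de la base, d'où l'égalité des germes et l'isomorphisme voulu.

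Pour le quasi-isomorphisme, le plan est de calculer la cohomologie des deux complexes et de vérifier que le morphisme induit des isomorphismes sur $H^0$, $H^1$ et $H^2$. Notons $\ov{H'} := \ov{G}\red / Z_{\ov{G}\red}$ et $\ov Z := \ov V / (\ov{G}\uu \cdot Z_{\ov{G}\red})$ les objets attachés au torseur $\ov V/X$ sous $\ov G$ comme dans la section \ref{section donnee descente} ; puisque $\widehat{G} = 0$, le groupe $\ov{G}\red$ est semi-simple et $\ov{H'}$ est son groupe adjoint. D'une part, le triangle exact $\Pic'(\ov V/\ov X)[-2] \to \UPic(f : V \to X) \to \UPic(\ov V) \to \Pic'(\ov V/\ov X)[-1]$ et la suite spectrale de Leray de $f$ pour $\Gm$, dans laquelle la première partie donne $f_*\Gm_V = \Gm_X$ (donc $H^1(\ov X, f_*\Gm) = \Pic(\ov X)$ et $H^2(\ov X, f_*\Gm) = \Br(\ov X)$), fournissent la suite exacte $0 \to \Pic(\ov X) \to \Pic(\ov V) \to \Pic'(\ov V/\ov X) \to \Br(\ov X) \to \Br(\ov V)$, et l'on obtient $H^0(\UPic(f)) = \ov k[V]^*/\ov k^*$, $H^1(\UPic(f)) = \Pic(\ov X)$ et $H^2(\UPic(f)) = \ker(\Br(\ov X) \to \Br(\ov V))$. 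D'autre part, le triangle exact définissant $C_{\ov V/X}$ et la suite exacte de Sansuc du torseur $\ov Z \to \ov X$ sous $\ov{H'}$, qui s'écrit $0 \to \Pic(\ov X) \to \Pic(\ov Z) \xrightarrow{\varphi} \Pic(\ov{H'}) \to \Br(\ov X)$ car $\widehat{H'} = 0$, donnent $H^0(C_{\ov V/X}) = \ov k[X]^*/\ov k^*$, $H^1(C_{\ov V/X}) = \ker\varphi = \Pic(\ov X)$ et $H^2(C_{\ov V/X}) = \coker\varphi = \im(\delta')$, où $\delta' : \Pic(\ov{H'}) \to \Br(\ov X)$ est le morphisme de liaison.

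Reste à comparer. En degré $0$, les deux groupes valent $\ov k[X]^*/\ov k^*$ (grâce à $\widehat G = 0$, resp. $\widehat{H'} = 0$) et le morphisme, induit par le tiré en arrière le long de $V \to Z \to X$, est l'identité ; en degré $1$, ils s'identifient tous deux à $\Pic(\ov X)$ via des tirés en arrière injectifs, compatibles puisque $V \to Z \to X$. La difficulté principale est le degré $2$, et je procéderais en deux temps. D'abord, par fonctorialité du morphisme de liaison de Sansuc relativement au morphisme de torseurs $\ov V \to \ov Z$ (équivariant pour $\ov G \to \ov{H'}$), on a $\delta' = \delta_G \circ p$, où $\delta_G : \Pic(\ov G) \to \Br(\ov X)$ est le morphisme de liaison de $\ov V / \ov X$ et $p : \Pic(\ov{H'}) \to \Pic(\ov G)$ le tiré en arrière. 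Ensuite, ce morphisme $p$ est surjectif : en effet $\Pic(\ov G) = \Pic(\ov{G}\ss) = \widehat\mu$ avec $\mu = \ker(\ov{G}\sc \to \ov{G}\ss)$, tandis que $\Pic(\ov{H'}) = \widehat{Z_{\ov{G}\sc}}$ (\cite{San}, lemme 6.9), $p$ correspondant à la restriction des caractères le long de $\mu \hookrightarrow Z_{\ov{G}\sc}$, qui est surjective. On en déduit $\im(\delta') = \delta_G(\Pic(\ov G)) = \im(\delta_G) = \ker(\Br(\ov X) \to \Br(\ov V))$, soit l'égalité abstraite $H^2(C_{\ov V/X}) = H^2(\UPic(f))$.

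L'obstacle essentiel réside dans la dernière étape : montrer que le morphisme de complexes réalise effectivement cette identification, c'est-à-dire que sa composante de degré $2$, donnée par $\Pic(\ov{H'}) \xrightarrow{p} \Pic(\ov G) \xrightarrow{s_{\ov V/\ov X}} \Pic'(\ov V/\ov X)$, est compatible avec les isomorphismes ci-dessus. Pour cela il faut vérifier que la construction $s_{\ov V/\ov X}$ de la section \ref{section W/X}, suivie du bord de Leray $\Pic'(\ov V/\ov X) \to \Br(\ov X)$, coïncide avec le morphisme de liaison $\delta_G$ de Sansuc ; ce point découle de la définition même du morphisme de Sansuc via le groupe de Picard relatif, et le lemme \ref{lem diag UPic comm} (qui exprime $s_{\ov V/\ov X}$ comme relèvement du morphisme de restriction) en fournit la commutativité. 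Une fois cette compatibilité acquise, la surjectivité de $p$ entraîne que le morphisme induit sur $H^2$ est bijectif, ce qui achève la démonstration du quasi-isomorphisme.
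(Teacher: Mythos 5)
Your first part is sound and is essentially the paper's own argument: the paper proves the sheaf-level exact sequence $1 \to \Gm_X \to f_*\Gm_V \to \widehat{G} \to 1$ (Proposition \ref{prop Rosenlicht}, following \cite{CTS} and \cite{Ray}), and your stalkwise proof (local triviality of the torsor over strict henselizations, then Rosenlicht's unit theorem applied to the \'etale neighborhoods) is the standard way to get exactly that, specialized to $\widehat{G}=0$. For the second part your architecture is a legitimate variant of the paper's: where the paper reduces the quasi-isomorphism to the single statement that $s_{\ov{V}/\ov{X}} : \Pic(\ov{G}) \to \Pic'(\ov{V}/\ov{X})$ is an isomorphism and obtains it from the five lemma in the comparison diagram of Proposition \ref{prop Leray BDCTX} (Corollaire \ref{cor Leray 2}), you compute the cohomology of both complexes separately (Leray on one side, Sansuc's sequence on the other) and compare, using the surjectivity of $\Pic(\ov{H'}) \to \Pic(\ov{G})$ (the paper's Lemme \ref{lem surj Pic}, which your character-theoretic argument reproves correctly in this semisimple case) together with the functoriality of Sansuc's connecting map.

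The genuine gap is precisely at the point you yourself call ``l'obstacle essentiel'', and your proposed resolution of it is not valid. Your identification of $H^2(C_{\ov{V}/X}) = \coker\varphi$ with $H^2(\UPic(f)) = \coker\gamma$ through $s_{\ov{V}/\ov{X}} \circ p$ requires the equality $\delta_{V/X} \circ s_{V/X} = r_{V/X} \circ \Delta_{V/X}$, that is, the commutativity of the square relating Sansuc's connecting map $\Delta_{V/X} : \Pic(G) \to \Br(X)$ to the Leray boundary $\delta_{V/X} : \Pic'(V/X) \to H^2(X, f_*\Gm_V)$; without it, neither the injectivity nor the surjectivity of the induced map on $H^2$ follows from your abstract computation of the two sides. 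But Lemme \ref{lem diag UPic comm} does not give this: it is the commutativity of a \emph{different} square, namely $\gamma = s_{V/X} \circ \varphi_1$ (the third square of Proposition \ref{prop Leray BDCTX}), and says nothing about the boundary maps into the Brauer group. Moreover $\Delta_{V/X}$ is \emph{not} defined via the relative Picard group: it is defined by a torsor/gerbe-theoretic construction (see \cite{CTX}, section 2, or \cite{BD}, 2.5), so the claim that the compatibility ``d\'ecoule de la d\'efinition m\^eme du morphisme de Sansuc'' begs the question. In the paper this compatibility is exactly the fourth square of Proposition \ref{prop Leray BDCTX}, whose proof invokes Giraud, proposition V.3.2.9, together with the definition of $\Delta_{V/X}$; this nontrivial verification is the technical heart of the proof, and your argument, like the paper's, cannot bypass it.
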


\begin{proof}
Pour le premier point, il suffit d'\'etendre la proposition 14.2 de \cite{CTS} (lemme de Rosenlicht global) aux torseurs sous des groupes connexes quelconques :
\begin{prop}
\label{prop Rosenlicht}
Soit $X$ un sch\'ema r\'eduit et $G / X$ un sch\'ema en groupes plat
localement de pr\'esentation finie, \`a fibres maximales lisses
connexes. Soit $f : V \xrightarrow{G} X$ un torseur sous $G$. On a
alors une suite exacte de faisceaux \'etales sur $X$ :
$$1 \to \Gm_X \to f_*\Gm_V \to \widehat{G} \to 1$$
o\`u $\widehat{G} := \mathcal{H}om_{X\textup{-gr}}(G, \Gm_X)$.
\end{prop}

\begin{proof}
La preuve est exactement similaire \`a celle la proposition 1.4.2 de
\cite{CTS}, en utilisant le corollaire VII.1.2 de \cite{Ray}.
\end{proof}

On d\'eduit imm\'ediatement de cette proposition le premier point de la proposition \ref{prop qis caracteres}.

Montrons le second point : il suffit de montrer que le morphisme naturel $s_{\ov{V}/\ov{X}} : \Pic(\ov{G}) \to \Pic'(\ov{V}/\ov{X})$ est un isomorphisme. On commence par le r\'esultat suivant :
\begin{prop}
\label{prop Leray BDCTX}
Soit $k$ un corps de caract\'eristique nulle, $X$ une $k$-vari\'et\'e lisse g\'eom\'etriquement int\`egre. Soit $f : V \to X$ un
torseur sous un $k$-groupe lin\'eaire connexe $G$. On a alors un
diagramme commutatif naturel de suites exactes :
\begin{displaymath}
\xymatrix{
k[G]^* / k^* \ar[r]^{\Delta'_{V/X}} \ar[d] & \Pic(X) \ar[r]^{f^*}
\ar[d]^{t_{V/X}} & \Pic(V) \ar[r]^{\varphi_1} \ar[d]^{=} & \Pic(G)
\ar[r]^{\Delta_{V/X}} \ar[d]^{s_{V/X}} & \Br(X) \ar[r]^{f^*} \ar[d]^{r_{V/X}} &
\Br(V) \ar[d]^= \\
0 \ar[r] & H^1(X, f_*\Gm_V) \ar[r]^{\varphi} & \Pic(V)
\ar[r]^{\gamma} & \Pic'(V/X) \ar[r]^{\delta_{V/X}} & H^2(X, f_* \Gm_V)
\ar[r]^{\beta} & \Br(V) \, ,
}
\end{displaymath}
o\`u la suite sup\'erieure provient de \cite{BD}, th\'eor\`eme 2.8, et la suite inf\'erieure est la suite associ\'ee \`a la suite spectrale de Leray $E_2^{p,q} := H^p(X, R^q f_* \Gm_V) \Rightarrow H^{p+q}(V, \Gm_V)$.
\end{prop}

\begin{proof}
%On commence par d\'efinir les fl\`eches verticales dans ce
%diagramme. 
Les morphismes $t_{V/X}$ et $r_{V/X}$ sont induits par le morphisme canonique
$\Gm_X \to f_* \Gm_V$, et le morphisme $s_{V/X}$ est d\'efini au d\'ebut de la section \ref{section W/X}.

Montrons la commutativit\'e du diagramme ainsi construit.
\begin{itemize}
  \item Commutativit\'e du premier carr\'e : il suffit de montrer que
    la compos\'ee 
$$k[G]^* / k^* \xrightarrow{\Delta'_{V/X}} \Pic(X) \xrightarrow{t_{V/X}}
H^1(X, f_*\Gm_V)$$
est le morphisme nul. Ceci est une cons\'equence de la commutativit\'e
du deuxi\`eme carr\'e et de l'exactitude de la ligne inf\'erieure du
diagramme
\item Commutativit\'e du deuxi\`eme carr\'e : c'est un calcul
  classique : voir par exemple \cite{Gir}, proposition 3.1.3 et
  3.1.4.1.
  \item Commutativit\'e du troisi\`eme carr\'e : c'est le lemme \ref{lem diag UPic comm}.
  \item Commutativit\'e du quatri\`eme carr\'e : c'est une cons\'equence
  de la proposition V.3.2.9 de \cite{Gir} et de sa preuve, en
  utilisant la d\'efinition de $\Delta_{V/X}$ (voir \cite{CTX} section 2 p.314
  ou \cite{BD} 2.5).
  \item Commutativit\'e du dernier carr\'e : la preuve est similaire
    \`a celle du deuxi\`eme carr\'e.
\end{itemize}
\end{proof}

%D\'eduisons deux cas particuliers de cette proposition : 
%\begin{cor}
%Sous les hypoth\`eses de la proposition \ref{prop Leray BDCTX}, si on
%suppose de plus que $Y(k) \neq \emptyset$ et si pour $y_0 \in Y(k)$,
%on note $x_0 := \pi(y_0)$, alors le morphisme induit par $x_0$, not\'e
%$\textup{ev}_{x_0} : \Pic'(Y/X) \to \Pic(H)$ est une r\'etraction de $s_X$
%(i.e. $\textup{ev}_{x_0} \circ s_X = \textup{id}$). En particulier, le
%morphisme $s_X$ est injectif et $\textup{ev}_{x_0}$ est surjectif.
%\end{cor}
On d\'eduit de cette proposition, gr\^ace au lemme des cinq, le corollaire suivant :
\begin{cor}
\label{cor Leray 2}
Sous les hypoth\`eses de la proposition \ref{prop Leray BDCTX}, si on
suppose de plus que $\widehat{G}(k) = k[G]^* / k^*$ est trivial (i.e. $G\tor = 0$), alors
$f_* \Gm_V = \Gm_X$ et le diagramme de la proposition \ref{prop Leray BDCTX} devient :
\begin{displaymath}
\xymatrix{
0 \ar[r] & \Pic(X) \ar[r]^{f^*}
\ar[d]^{t_{V/X}}_{\cong} & \Pic(V) \ar[r]^{\varphi_1} \ar[d]^{=} & \Pic(G)
\ar[r]^{\Delta_{V/X}} \ar[d]^{s_{V/X}}_{\cong} & \Br(X) \ar[r]^{f^*} \ar[d]^{r_{V/X}}_{\cong} &
\Br(V) \ar[d]^= \\
0 \ar[r] & H^1(X, f_*\Gm_V) \ar[r]^{\varphi} & \Pic(V) \ar[r]^{\gamma} & \Pic'(V/X) \ar[r]^{\delta_{V/X}} & H^2(X, f_* \Gm_V)
\ar[r]^{\beta} & \Br(V) \, .
}
\end{displaymath}
\end{cor}

Revenons \`a la preuve de la proposition \ref{prop qis caracteres} : le corollaire \ref{cor Leray 2} assure que le morphisme $s_{\ov{V}/\ov{X}} : \Pic(\ov{G}) \to \Pic'(\ov{V}/\ov{X})$ est un isomorphisme, ce qui conclut la preuve.
%
%Montrons d\'esormais la r\'eciproque : on suppose que le morphisme $C_{Y/X} \to \UPic(\pi)$ est un quasi-isomorphisme. En d%egr\'e $0$, cela implique que le morphisme $\varphi_0 : U(\ov{Y}) \to U(\ov{H})$ est le morphisme nul. Par cons\'equent, on% a une suite exacte :
%$$0 \to U(\ov{H}) \xrightarrow{\textup{type}} \Pic(\ov{X}) \to \Pic(\ov{Y}) \, .$$
%En degr\'e $1$, le morphisme naturel $\Pic(\ov{X}) \to H^1(\ov{X}, \pi_* \Gm_Y)$ est un isomorphisme. Le diagramme commutat%if de la proposition \ref{prop Leray BDCTX} assure alors que le morphisme $\Pic(\ov{X}) \to \Pic(\ov{Y})$ est injectif, et %donc le groupe $U(\ov{H})$ est trivial, ce qui conclut la preuve de l'\'equivalence entre (1) et (4), et donc la preuve de %la proposition \ref{prop qis caracteres}.
\end{proof}

\subsubsection{Application au cas d'un torseur avec donn\'ee de recollement}
\label{section Z X torseurs}
Revenons \`a la situation et aux notations introduites au d\'ebut de la section \ref{section donnee descente}.

On applique la proposition \ref{prop qis caracteres} au torseur $p : Z
\xrightarrow{H'} X$. Les hypoth\`eses de cette proposition sont
v\'erifi\'ees, puisque ${H'}\tor = 0$. Par cons\'equent, on a
l'\'enonc\'e suivant :
\begin{prop}
\label{prop Z X torseurs}
Soit $\ov{Y}/X$ un torseur sous $\ov{H}$ lin\'eaire connexe, muni d'une donn\'ee de
 recollement. On conserve les notations de la section \ref{section donnee descente}.
Alors on a un quasi-isomorphisme 
$$C_{\ov{Y}/X} = C_{\ov{Z}/X} \to \UPic(p : Z \to X) \, ,$$
et un isomorphisme
$$\Gm_X \cong p_* \Gm_Z \, .$$
\end{prop}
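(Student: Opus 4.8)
The plan is to apply Proposition \ref{prop qis caracteres} directly to the descended torseur $p : Z \xrightarrow{H'} X$, so that the whole task reduces to checking that the hypothesis of that proposition holds for the structural group $H'$. The point of the construction carried out at the beginning of Section \ref{section donnee descente} was exactly to produce, out of the gluing data on $\ov{Y}/X$, a torseur $p : Z \xrightarrow{H'} X$ \emph{defined over $k$}, and the assertion to prove is that this $k$-torseur behaves, at the level of $\UPic$, like its geometric fibre up to quasi-isomorphism.

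First I would verify that ${H'}\tor = 0$, equivalently $\widehat{H'} = 0$. The group $H'$ is a $k$-form of $\ov{H'} = \ov{H}\red / Z_{\ov{H}\red}$, the quotient of the reductive group $\ov{H}\red$ by its center. This adjoint quotient is semisimple with trivial center, hence carries no nontrivial character; thus $\widehat{H'} = 0$, and the hypothesis of Proposition \ref{prop qis caracteres} is satisfied with $f = p$, $V = Z$ and $G = H'$. Applying that proposition then yields at once both assertions of the statement: the isomorphism $\Gm_X \cong p_* \Gm_Z$, and the quasi-isomorphism $C_{\ov{Z}/X} \to \UPic(p : Z \to X)$.

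The only remaining point is the identification $C_{\ov{Y}/X} = C_{\ov{Z}/X}$, which I expect to be tautological. Indeed, by its very construction in Section \ref{section donnee descente} the complex $C_{\ov{Y}/X}$ was defined to be $[\ov{k}(Z)^* / \ov{k}^* \to \Div(\ov{Z}) \xrightarrow{\varphi'} \Pic(\ov{H'})]$, i.e. precisely the complex attached to the descended torseur $\ov{Z}/\ov{X}$ and its structural group $\ov{H'}$, so no further argument is needed to match the two. I do not anticipate any genuine obstacle here: the single substantive step is the verification $\widehat{H'} = 0$, which relies only on the semisimplicity of the adjoint group $\ov{H}\red / Z_{\ov{H}\red}$, everything else being an immediate invocation of Proposition \ref{prop qis caracteres} and an unwinding of definitions.
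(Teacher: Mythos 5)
Your proof is correct and follows exactly the paper's argument: the paper likewise obtains the proposition by applying Proposition \ref{prop qis caracteres} to the descended torsor $p : Z \xrightarrow{H'} X$, noting that ${H'}\tor = 0$ (the paper states this without justification, whereas you correctly derive it from the fact that $\ov{H'} = \ov{H}\red / Z_{\ov{H}\red}$ is reductive with trivial center, hence semisimple with $\widehat{H'} = 0$). Your observation that $C_{\ov{Y}/X} = C_{\ov{Z}/X}$ is tautological from the definition in Section \ref{section donnee descente} is also exactly how the paper treats this identification.
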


\section{Groupe de Brauer d'un torseur avec donn\'ee de recollement}
\label{section Brauer torseurs descente}
Dans cette section, le cadre est le m\^eme que dans la pr\'ec\'edente : $k$
est un corps de caract\'eristique nulle, $X$ est une $k$-vari\'et\'e
lisse g\'eom\'etriquement int\`egre, et $\ov{H}$ est un $\ov{k}$-groupe alg\'ebrique. Soit $\ov{\pi} : \ov{Y}
\xrightarrow{\ov{H}} \ov{X}$ un $\ov{X}$-torseur sous $\ov{H}$ muni d'une donn\'ee de recollement. On d\'efinit le groupe
$$\Br_1(X,\ov{Y}) := \textup{Ker}(\Br(X) \to \Br(\ov{X}) \xrightarrow{\ov{\pi^*}} \Br(\ov{Y}))$$ 
comme dans \cite{BD}, et $\Br_a(X,\ov{Y}) := \Br_1(X,\ov{Y})/\Br(k)$. On l'appelle groupe de Brauer alg\'ebrique du ``torseur'' $\ov{Y}/X$. Le r\'esultat principal de cette section est le suivant :

\begin{theo}
\label{theo torseurs}
Avec les notations pr\'ec\'edentes, on dispose d'un isomorphisme canonique, fonctoriel en
$(X,\ov{Y},\ov{H},\ov{\pi})$ :
$$U(X) \xrightarrow{\cong} \H^0(k, C_{\ov{Y}/X}) \, ,$$
et d'une suite exacte de groupes commutatifs, fonctorielle en $(X,\ov{Y},\ov{H},\ov{\pi})$ :
%\begin{changemargin}{-4pt}{-4pt}
\begin{equation*}
0 \to \Pic(X) \to \H^1(k, C_{\ov{Y}/X}) \to \Br(k) \to \Br_1(X,\ov{Y})
 \to \H^2(k,C_{\ov{Y}/X}) \to N^3(k, \Gm) \, ,
\end{equation*}
%\end{changemargin}
o\`u $N^3(k, \Gm) := \textup{Ker}(H^3(k, \Gm) \to H^3(X, \Gm))$.
\end{theo}

Avant de commencer la preuve, rappelons la suite exacte (\ref{sel base}) dans le contexte du morphisme $p : Z \to X$ induit par la donn\'ee de recollement :
\begin{equation}
\label{sel base Z}
0 \to \Gm_X \to {p}_* \mathcal{K}_Z^* \to {p}_* \mathcal{D}iv_Z \to R^1 p_* \Gm_Z \to 0 \, ,
\end{equation}
et d\'efinissons le morphisme $\partial_2 : {p_X}_* R^1 p_* \Gm_Z \to R^2 {p_X}_* \Gm_X$ comme le cobord it\'er\'e associ\'e au foncteur ${p_X}_*$ et \`a cette suite exacte (rappelons que la proposition \ref{prop qis caracteres} permet d'identifier $\Gm_X = {p}_* \Gm_Z$).

\begin{proof}
%Gr\^ace \`a la section \ref{section Z X torseurs}, on a un quasi-isomorphisme canonique
%$$C_{\ov{Y}/X} \to \UPic(p : Z \to X) \, .$$
%On cherche maintenant \`a expliciter l'objet $C_{Z/X}$ :
%\begin{lem}
%\label{lem qis Z}
%On a des quasi-isomorphismes canoniques :
%$$C_{Z/X} \to [\ov{k}(Z)^*/\ov{k}^* \to \textup{Div}(\ov{Z})
%\xrightarrow{\varphi'} \Pic(\ov{H'})] \to [\ov{k}(Z)^*/\ov{k}^* \to \textup{Div}(\ov{Z})
%\xrightarrow{\varphi'} \Pic'(\ov{Z}/\ov{X})] =: C'_{Z/X} \, .$$
%\end{lem}
%
%\begin{proof}
%Puisque $H'$ n'a pas de caract\`eres non triviaux, on a $U(\ov{H'}) =
%0$, et donc ce lemme est une cons\'equence de la proposition  \ref{prop qis caracteres} et de sa preuve.
%\end{proof}
%
D\'efinissons $\UPic'(p : Z \to X) := [\ov{k}(Z)^* \to \textup{Div}(\ov{Z}) \xrightarrow{\varphi'} \Pic'(\ov{Z}/\ov{X})]$. 
%Comparons $\UPic'(p : Z \to X)$ avec $\R {p_X}_* \Gm_X$ :
\begin{prop}
\label{prop qis Brauer}
Il existe un triangle exact naturel
$$\UPic'(p) \to \tau_{\leq 2} \R {p_X}_* \Gm_X \to \left( R^2 {p_X}_* \Gm_X / \partial_2({p_X}_* R^1 p_* \Gm_Z) \right)[-2] \to \UPic'(p)[1] \, ,$$
et une suite exacte canonique
$$0 \to \partial_2({p_X}_* R^1 p_* \Gm_Z) \to R^2 {p_X}_* \Gm_X \to R^2 {p_Z}_* \Gm_Z \, .$$
%$$\alpha_X : \UPic'(p : Z \to X) \to \textup{Fibre}(\tau_{\leq 2} \R {p_X}_* \Gm_X \xrightarrow{p^*} R^2 {p_Z}_* \Gm_Z[-2])\, .$$
\end{prop}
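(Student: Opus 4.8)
The plan is to realize $\UPic'(p)$ as a \emph{naive} (termwise) pushforward of a sheaf resolution of $\Gm_X$ on $X$, and then to compare this naive pushforward with the genuine derived pushforward $\R {p_X}_* \Gm_X$. First I would observe that the four-term exact sequence (\ref{sel base Z}) exhibits $\Gm_X$ as quasi-isomorphic, as a complex of \'etale sheaves on $X$, to
$$K := \left[ p_* \mathcal{K}_Z^* \to p_* \mathcal{D}iv_Z \to R^1 p_* \Gm_Z \right]$$
placed in degrees $0,1,2$: one has $H^0(K) = \Gm_X$ and $H^1(K) = H^2(K) = 0$ precisely because (\ref{sel base Z}) is exact. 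Applying ${p_X}_*$ (global sections over $\ov X$ with their Galois action) termwise, one gets $\ov k(Z)^* = H^0(\ov X, p_* \mathcal{K}_Z^*)$, $\Div(\ov Z) = H^0(\ov X, p_* \mathcal{D}iv_Z)$ and $\Pic'(\ov Z/\ov X) = H^0(\ov X, R^1 p_* \Gm_Z)$, with differentials the divisor map and $\varphi' = \partial$; hence $\UPic'(p)$ is exactly the termwise pushforward ${p_X}_* K$. The canonical morphism from the naive to the derived pushforward, composed with $K \cong \Gm_X$ and with $\tau_{\leq 2}$ (legitimate since ${p_X}_* K$ lies in $D^{\leq 2}$), then furnishes a natural morphism $\Phi : \UPic'(p) = {p_X}_* K \to \tau_{\leq 2} \R {p_X}_* \Gm_X$.

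Next I would compute $H^i(\Phi)$ using the exact sequences (\ref{sel1}), (\ref{sel2}), (\ref{sel3}) from the construction of section \ref{subsection morphisme} applied to $p : Z \to X$, where $\mathcal{Q} = \coker(\Gm_X \to p_* \mathcal{K}_Z^*)$. In degree $0$ both sides are $\ov k[Z]^* = \ov k[X]^*$ by Proposition \ref{prop qis caracteres}; in degree $1$ a diagram chase through (\ref{sel1}) and (\ref{sel3}) identifies $H^1(\UPic'(p))$ with $R^1 {p_X}_* \Gm_X = \Pic(\ov X)$. In degree $2$ one has $H^2(\UPic'(p)) = \coker(\partial : \Div(\ov Z) \to \Pic'(\ov Z/\ov X))$, and I would show that $H^2(\Phi)$ sends it injectively onto $\partial_2({p_X}_* R^1 p_* \Gm_Z) \subseteq R^2 {p_X}_* \Gm_X$. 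This is where the definition of $\partial_2$ enters: splitting (\ref{sel base Z}) as $0 \to \Gm_X \to p_* \mathcal{K}_Z^* \to \mathcal{Q} \to 0$ and $0 \to \mathcal{Q} \to p_* \mathcal{D}iv_Z \to R^1 p_* \Gm_Z \to 0$, one has $\partial_2 = \delta_2 \circ \delta_1$, where by (\ref{sel3}) the coboundary $\delta_1 : \Pic'(\ov Z/\ov X) \to R^1 {p_X}_* \mathcal{Q}$ has image $\coker(\partial)$ and by (\ref{sel2}) the coboundary $\delta_2 : R^1 {p_X}_* \mathcal{Q} \to R^2 {p_X}_* \Gm_X$ is injective; tracking $H^2(\Phi)$ through the very sequences (\ref{sel2}), (\ref{sel3}) that define it gives $\partial_2({p_X}_* R^1 p_* \Gm_Z) = \delta_2(\coker \partial) \cong \coker(\partial) = H^2(\UPic'(p))$. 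Granting this, the triangle follows formally: $\Phi$ is an isomorphism on $H^0$ and $H^1$ and injective on $H^2$ with cokernel $R^2 {p_X}_* \Gm_X / \partial_2({p_X}_* R^1 p_* \Gm_Z)$, and both complexes lie in $D^{\leq 2}$, so the long exact cohomology sequence of the cone of $\Phi$ shows this cone is concentrated in degree $2$ and equal to $(R^2 {p_X}_* \Gm_X / \partial_2({p_X}_* R^1 p_* \Gm_Z))[-2]$, which is the desired exact triangle.

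For the second assertion I would use the Leray spectral sequence of the composite $p_Z = p_X \circ p$, namely $E_2^{s,t} = R^s {p_X}_* (R^t p_* \Gm_Z) \Rightarrow R^{s+t} {p_Z}_* \Gm_Z$. Since $R^0 p_* \Gm_Z = \Gm_X$ (Proposition \ref{prop qis caracteres}), the edge map in total degree $2$ reads $R^2 {p_X}_* \Gm_X = E_2^{2,0} \twoheadrightarrow E_\infty^{2,0} \hookrightarrow R^2 {p_Z}_* \Gm_Z$; its kernel is the image of $d_2 : E_2^{0,1} \to E_2^{2,0}$, and $E_\infty^{2,0}$, being the bottom step of the filtration, injects into $R^2 {p_Z}_* \Gm_Z$. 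This yields exactly the exact sequence $0 \to d_2({p_X}_* R^1 p_* \Gm_Z) \to R^2 {p_X}_* \Gm_X \to R^2 {p_Z}_* \Gm_Z$ once one identifies $d_2$ with $\partial_2$.

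\emph{The main obstacle} is precisely this identification $d_2 = \partial_2$, together with the analogous compatibility of $H^2(\Phi)$ with $\delta_2 \circ \delta_1$ invoked above: in both places one must check that the second differential of a Grothendieck-type spectral sequence, restricted to $E_2^{0,1} = {p_X}_* R^1 p_* \Gm_Z$, coincides with the iterated coboundary defining $\partial_2$. I would deduce this from the fact that (\ref{sel base Z}) is obtained by applying $p_*$ to the resolution $\Gm_Z \cong [\mathcal{K}_Z^* \to \mathcal{D}iv_Z]$ (using $R^1 p_* \mathcal{K}_Z^* = 0$ by Hilbert 90), so that $\tau_{\leq 1} \R p_* \Gm_Z \cong [p_* \mathcal{K}_Z^* \to p_* \mathcal{D}iv_Z]$; the $d_2$ of the composite-functor spectral sequence on the relevant terms is then, by the standard compatibility between the second differential of such a spectral sequence and the connecting maps of a two-step truncation, exactly $\partial_2$. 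The bookkeeping needed to make these two naturality statements precise is the only genuinely delicate part of the argument; everything else is the formal extraction of the triangle and the exact sequence from the sequences (\ref{sel1})--(\ref{sel3}).
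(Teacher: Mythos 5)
Your proposal is correct, and its core mechanism --- comparing the termwise pushforward $\Phi : {p_X}_* K \to \tau_{\leq 2} \R {p_X}_* \Gm_X$ of the resolution (\ref{sel base Z}) with the derived pushforward, then showing the cone of $\Phi$ is concentrated in degree $2$ --- is the same as in the paper's proof; but the packaging and, above all, the derivation of the second exact sequence genuinely differ. The paper abstracts the first part into Lemme \ref{lem categ degre 2}, valid for any left exact functor and any $n$-term resolution, proved via a Cartan--Eilenberg resolution of $B_\bullet$ and an explicit chase showing the cokernel complex $Q_\bullet$ is exact in degrees $<n$; you instead compute $H^i(\Phi)$ directly in the case at hand, which works and can even be made chase-free: $H^0(\Phi)$ is an isomorphism by left exactness, and the single vanishing $R^1 {p_X}_* p_* \mathcal{K}_Z^* = 0$ (Hilbert 90) forces, via the hypercohomology spectral sequence of the naive-to-derived comparison, that $H^1(\Phi)$ is an isomorphism and $H^2(\Phi)$ is injective. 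For the second sequence the routes diverge: the paper's lemma produces $0 \to \partial_2({p_X}_* R^1 p_* \Gm_Z) \to R^2 {p_X}_* \Gm_X \to R^2 {p_X}_* p_* \mathcal{K}_Z^*$ and then replaces the last term by $R^2 {p_Z}_* \Gm_Z$ through a commutative square whose bottom and right arrows are injective (using $(R^1 {p_Z}_*) \mathcal{D}iv_Z = 0$ and ${p_X}_* (R^1 p_*) \mathcal{K}_Z^* = 0$), whereas you read the sequence off the Leray spectral sequence of $p_Z = p_X \circ p$, the kernel of the edge map $E_2^{2,0} \to R^2 {p_Z}_* \Gm_Z$ being $\mathrm{im}\, d_2$; this is cleaner but shifts all the weight onto the identification $d_2 = \partial_2$. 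That identification, together with the matching statement for the image of $H^2(\Phi)$, is exactly the delicate point you flag; it is worth noting that the paper faces the same issue and also discharges it only by reference (identifying $\epsilon_n(B_n)$ with $\partial_n(B_n)$ ``en s'inspirant de'' \cite{CE}, proposition 7.1), so your plan --- expressing $\partial_2$ as the map induced by the connecting morphism of the truncation triangle $\Gm_X \to \tau_{\leq 1} \R p_* \Gm_Z \to R^1 p_* \Gm_Z[-1] \to \Gm_X[1]$, which is the Yoneda class of (\ref{sel base Z}), and invoking the standard description of $d_2$ on $E_2^{0,1}$ in these terms --- is an acceptable, and arguably more transparent, way to make both compatibilities precise. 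In short: no gap beyond what the paper itself delegates to the literature, and a tidier treatment of the second assertion, at the cost of losing the general-purpose lemma.
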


\begin{proof}
On montre en fait le r\'esultat plus g\'en\'eral suivant :
\begin{lem}
\label{lem categ degre 2}
Soient $\mathcal{A}$, $\mathcal{B}$ deux cat\'egories ab\'eliennes, et
$F : \mathcal{A} \to \mathcal{B}$ un foncteur exact \`a gauche.
Soit
\begin{equation}
\label{suite longue derivee}
0 \to A \to B_0 \to B_1 \to \dots \to B_n \to 0
\end{equation}
une suite exacte dans $\mathcal{A}$. On suppose que $\mathcal{A}$ a
suffisamment d'injectifs et que $(R^i F) B_k = 0$ pour tout $0 \leq k \leq n-2$ et tout $1 \leq i \leq n-1-k$. Alors on a un triangle exact canonique (dans la cat\'egorie d\'eriv\'ee associ\'ee \`a la cat\'egorie des complexes born\'es de $\mathcal{A}$) :
$$R^0 F [B_0 \to \dots \to B_n] \to \tau_{\leq n} (\R F) A \to \left((R^n F) A / \partial_n(B_n)\right)[-n] \to R^0 F [B_0 \to \dots \to B_n][1] \, ,$$
o\`u $\partial_n : B_n \to (R^n F) A$ est le cobord it\'er\'e associ\'e \`a la suite exacte (\ref{suite longue derivee}).
Supposons en outre que $(R^{n-1} F) B_1 = (R^{n-2} F) B_2 = \dots = (R^1 F) B_{n-1} = 0$, alors on a une suite exacte canonique
$$0 \to \partial_n(B_n) \to (R^n F) A \to (R^n F) B_0 \, .$$
\end{lem}

\begin{proof}
On note $B_{\bullet}$ le complexe
$$B_{\bullet} := [B_0 \to B_1 \to \dots \to B_n]\, .$$
Soit $B_{\bullet} \xrightarrow{\epsilon_{\bullet}} I_{\bullet, \bullet}$
 une r\'esolution injective de Cartan-Eilenberg de $B_{\bullet}$ (voir
 \cite{Wei}, 5.7.9). Consid\'erons le complexe total
 $\textup{Tot}(F(I_{\bullet, \bullet}))$ associ\'e au complexe double
 $F(I_{\bullet, \bullet})$. On a un morphisme injectif naturel de complexes de longueur $n+1$ 
$$F(\epsilon_{\bullet}) : F(B_{\bullet}) \to \tau_{\leq n} \textup{Tot}(F(I_{\bullet, \bullet})) \, .$$
Montrons que son conoyau $Q_{\bullet}$ est quasi-isomorphe \`a $\left(\mathcal{H}^n(\textup{Tot}(F(I_{\bullet, \bullet}))) / \epsilon_n(B_n)\right)[-n]$.
%(vu comme un complexe concentr\'e en degr\'e $n$).
Par d\'efinition, pour tout $0 \leq k \leq n-1$, on a 
$$Q_k = F(I_{0,k}) \oplus \dots \oplus F(I_{k-1,1}) \oplus \left( F(I_{k,0}) / \epsilon_k(F(B_k)) \right) \, .$$
Fixons $0 \leq k \leq n-1$. Puisque $(R^k F) B_0 = (R^{k-1} F) B_1 = \dots = (R^1 F) B_{k-1} = 0$, une chasse au diagramme simple dans $F(I_{\bullet, \bullet})$ assure que $Q_{\bullet}$ est exact en degr\'e $k$. 
Par cons\'equent, on a un quasi-isomorphisme canonique
$$Q_{\bullet} \to \mathcal{H}^n(Q_{\bullet})[-n] = \left( Q_n / Q_{n-1} \right)[-n] \, .$$
Or $Q_n$ est le quotient du noyau de
$$F(I_{0,n}) \oplus \dots \oplus F(I_{n,0}) \to F(I_{0,n+1}) \oplus \dots \oplus F(I_{n+1,0})$$
par $\epsilon_n(B_n) \subset F(I_{n,0})$, donc 
$$Q_n / Q_{n-1} = \mathcal{H}^n(\textup{Tot}(F(I_{\bullet, \bullet}))) / \epsilon_n(B_n) \, .$$
Finalement, la suite exacte de complexes
$$0 \to F(B_{\bullet}) \xrightarrow{\epsilon_{\bullet}} \tau_{\leq n} \textup{Tot}(F(I_{\bullet, \bullet})) \to Q_{\bullet} \to 0$$
induit un triangle exact canonique (qui ne d\'epend pas de la r\'esolution $I_{\bullet, \bullet}$ choisie) dans la cat\'egorie d\'eriv\'ee
\begin{equation}
\label{triangle derive resolution}
F(B_{\bullet}) \to \tau_{\leq n} (\R F) B_{\bullet} \to \left( (R^n F) B_{\bullet} / \epsilon_n(B_n) \right)[-n] \to F(B_{\bullet})[1] \, .
\end{equation}
Enfin la suite exacte (\ref{suite longue derivee}) induit un isomorphisme naturel dans la cat\'egorie d\'eriv\'ee
$$A \cong B_{\bullet} $$
qui permet d'identifier le triangle exact (\ref{triangle derive resolution}) au triangle exact du lemme (apr\`es avoir identifi\'e $\epsilon_n(B_n) \subset (R^n F) B_{\bullet}$ avec $\partial_n(B_n) \subset (R^n F) A$, en s'inspirant de \cite{CE}, proposition 7.1).

Enfin, la derni\`ere partie du lemme est \'evidente en d\'ecomposant la suite (\ref{suite longue derivee}) en suites exactes courtes successives et \'ecrivant les suites exactes longues associ\'ees.
\end{proof}

%\begin{rem}
%Dans le cas particulier o\'u $n=1$, ce lemme est en quelque sorte une g\'en\'eralisation de la construction 2.4 et du lemme 2.3 de \cite{BvH}.
%\end{rem}

Appliquons le lemme \ref{lem categ degre 2} \`a la
situation consid\'er\'ee dans la proposition \ref{prop qis Brauer} et \`a la suite exacte (\ref{sel base Z}) dans la cat\'egorie $\mathcal{A}$ des
faisceaux ab\'eliens fppf sur $X$, $\mathcal{B}$ \'etant la cat\'egorie des faisceaux ab\'eliens fppf sur $\textup{Spec}(k)$ et le foncteur $F$ \'etant l'image directe $(p_X)_*$ par le morphisme structural $p_X : X \to \textup{Spec}(k)$.
Il faut v\'erifier les hypoth\`eses suivantes pour appliquer le lemme : $(R^1 {p_X}_*) p_* \mathcal{K}_Z^* = 0$ et $(R^1 {p_X}_*) p_*\mathcal{D}iv_Z = 0$. La premi\`ere annulation est une cons\'equence de Hilbert 90, puisque $(R^1 {p_X}_*) p_* \mathcal{K}_Z^*$ s'injecte dans $(R^1 {p_Z}_*) \mathcal{K}_Z^*$, qui est nul par Hilbert 90 (voir \cite{GBr} II, lemme 1.6). 
La seconde annulation provient de l'injection naturelle $(R^1 {p_X}_*) p_* \mathcal{D}iv_Z \to (R^1 {p_Z}_*)\mathcal{D}iv_Z = 0$ (puisque $Z$ est lisse, le faisceau $\mathcal{D}iv_Z$ s'identifie au faisceau des diviseurs de Weil : voir \cite{GBr} II).

Ainsi le lemme \ref{lem categ degre 2} fournit-il un triangle exact naturel dans la cat\'egorie d\'eriv\'ee
$$R^0 {p_X}_* [p_* \mathcal{K}_Z^* \to p_* \mathcal{D}iv_Z \to R^1 p_* \Gm_Z] \to \tau_{\leq 2} \R {p_X}_* \Gm_X \to $$
$$\to \left( R^2 {p_X}_* \Gm_X / \partial_2({p_X}_* R^1 p_* \Gm_Z) \right)[-2] \to R^0 {p_X}_* [p_* \mathcal{K}_Z^* \to p_* \mathcal{D}iv_Z \to R^1 p_* \Gm_Z][1] \, ,$$
et une suite exacte 
\begin{equation}
\label{sec version 1}
0 \to \partial_2({p_X}_* R^1 p_* \Gm_Z) \to R^2 {p_X}_* \Gm_X \to R^2 {p_X}_* p_* \mathcal{K}_Z^* \, .
\end{equation}

Or $R^0 {p_X}_* [p_* \mathcal{K}_Z^* \to p_* \mathcal{D}iv_Z \to
R^1 p_* \Gm_Z] = \UPic'(p : Z \to X)$, donc le triangle exact devient
\begin{equation}
\label{triangle exact final}
\UPic'(p) \to \tau_{\leq 2} \R {p_X}_* \Gm_X \to \left( R^2 {p_X}_* \Gm_X / \partial_2({p_X}_* R^1 p_* \Gm_Z) \right)[-2] \to \UPic'(p)[1] \, .
\end{equation}
%Or le cobord it\'er\'e $\partial_2 : {p_X}_* R^1 p_* \Gm_Z \to R^2 {p_X}_* \Gm_X$ est \'egal \`a la compos\'ee des cobords 
%$${p_X}_* R^1 p_* \Gm_Z \to R^1 {p_X}_* \mathcal{Q} \to R^2 {p_X}_* \Gm_X$$
%(voir section \ref{subsection morphisme} pour la d\'efinition de $\mathcal{Q}$). Or $(R^1 {p_X}_*) p_*\mathcal{D}iv_Z = 0$ puisque l'on a une injection naturelle $(R^1 {p_X}_*) p_* \mathcal{D}iv_Z \to (R^1 {p_Z}_*)\mathcal{D}iv_Z$ et ce dernier est nul puisque $Z$ est lisse (le faisceau $\mathcal{D}iv_Z$ s'identifie au faisceau des diviseurs de Weil : voir \cite{GBr} II). Donc le premier cobord ${p_X}_* R^1 p_* \Gm_Z \to R^1 {p_X}_* \mathcal{Q}$ est surjectif. En outre, l'annulation de $(R^1 {p_X}_*) p_* \mathcal{K}_Z^*$ assure que l'on a une suite exacte
%$$0 \to \partial_2({p_X}_* R^1 p_* \Gm_Z) \to R^2 {p_X}_* \Gm_X \to R^2 {p_X}_* p_* \mathcal{K}_Z^* \, ,$$
% il suffit de construire un
%isomorphisme naturel
%$$\textup{Fibre}(\tau_{\leq 2} \R {p_X}_*
%\Gm_X \rightarrow R^2 {p_X}_* p_* \mathcal{K}_Z^* [-2]) \cong \textup{Fibre}(\tau_{\leq 2} \R {p_X}_*
%\Gm_X \rightarrow R^2 {p_Z}_* \Gm_Z [-2]) \, .$$

On consid\`ere alors le diagramme commutatif naturel suivant :
\begin{displaymath}
\xymatrix{
(R^2 {p_X}_*) p_* \Gm_Z \ar[r] \ar[d] & (R^2 {p_X}_*) p_*
\mathcal{K}_Z^* \ar[d] \\
(R^2 {p_Z}_*) \Gm_Z \ar[r] & (R^2 {p_Z}_*) \mathcal{K}_Z^* \, ,
}
\end{displaymath}
o\`u les fl\`eches verticales proviennent des suites spectrales
\'evidentes. La trivialit\'e des faisceaux $(R^1 {p_Z}_*)
\mathcal{D}iv_Z$ et ${p_X}_* (R^1 p_*) \mathcal{K}_Z^*$ assure que le
morphisme du bas et celui de droite sont injectifs, ce qui implique
que l'on a une \'egalit\'e
$$\textup{Ker}((R^2 {p_X}_*) p_* \Gm_Z \to (R^2 {p_X}_*) p_*
\mathcal{K}_Z^*) = \textup{Ker}((R^2 {p_X}_*) p_* \Gm_Z \to (R^2
{p_Z}_*) \Gm_Z) \, ,$$
donc la suite exacte (\ref{sec version 1}) se r\'e\'ecrit
\begin{equation}
\label{sec version 2}
0 \to \partial_2({p_X}_* R^1 p_* \Gm_Z) \to R^2 {p_X}_* \Gm_X \to R^2 {p_Z}_* \Gm_Z \, .
\end{equation}
%$$\textup{Fibre}(\tau_{\leq 2} \R {p_X}_*
%\Gm_X \rightarrow R^2 {p_X}_* p_* \mathcal{K}_Z^* [-2]) \cong \textup{Fibre}(\tau_{\leq 2} \R {p_X}_*
%\Gm_X \rightarrow R^2 {p_Z}_* \Gm_Z [-2]) \, ,$$
Finalement, le triangle (\ref{triangle exact final}) et la suite (\ref{sec version 2}) terminent la preuve de la proposition \ref{prop qis Brauer}.
\end{proof}

Poursuivons la preuve du th\'eor\`eme \ref{theo torseurs}. On consid\`ere le triangle exact \'evident :
\begin{equation}
\label{triangle exact evident}
\Gm \to \UPic'(p : Z \to X) \to \UPic(p : Z \to X) \to \Gm[1] \, .
\end{equation}
La proposition \ref{prop Z X torseurs} assure que l'on a un isomorphisme canonique dans la cat\'egorie d\'eriv\'ee :
$$C_{\ov{Y}/X} \cong \UPic(p : Z \to X) \, .$$
Donc la suite exacte longue associ\'ee au triangle (\ref{triangle exact evident}) fournit les suites exactes suivantes :
\begin{equation}
\label{sel U}
0 \to H^0(k, \Gm) \to \H^0(k, \UPic'(p)) \to \H^0(k, C_{\ov{Y}/X}) \to H^1(k, \Gm) = 0
\end{equation}
et
\begin{align}
\label{sel PicBr}
& 0 \to \H^1(k, \UPic'(p)) \to \H^1(k, C_{\ov{Y}/X}) \to H^2(k, \Gm)
\to \\
& \to \H^2(k, \UPic'(p)) \to \H^2(k, C_{\ov{Y}/X}) \to H^3(k, \Gm) \to
\H^3(k, \UPic'(p)) \nonumber \, .
\end{align}

%Le th\'eor\`eme \ref{theo qis} assure que l'on a un isomorphisme
%canonique dans la cat\'egorie d\'eriv\'ee :
%$$\kappa_X : \widehat{C}_X \xrightarrow{\cong} \UPic(X,G) \, .$$
Or la proposition \ref{prop qis Brauer} assure que l'on a un triangle exact canonique :
\begin{equation}
\label{triangle exact prime}
\UPic'(p) \to \tau_{\leq 2} \R {p_X}_* \Gm_X \to \left( R^2 {p_X}_*
						  \Gm_X /
						  \partial_2({p_X}_*
						  R^1p_* \Gm_Z)
						 \right)[-2] \to \UPic'(p)[1] \, .
\end{equation}
Ce triangle induit des isomorphismes canoniques :
$$\H^0(k, \UPic'(p)) = k[X]^* \, , \, \, \, \H^1(k, \UPic'(p)) = \Pic(X) \, ,$$
%$$\H^1(k, \UPic'(p)) = \Pic(X) \, ,$$
ainsi que l'isomorphisme suivant (en utilisant la seconde partie de la proposition \ref{prop qis Brauer}) :
$$\H^2(k, \UPic'(p)) = \textup{Ker}(\Br(X) \xrightarrow{p^*} \Br(\ov{Z})) \, .$$

En combinant ces isomorphismes avec les suites exactes (\ref{sel U}) et (\ref{sel PicBr}), on obtient le th\'eor\`eme \ref{theo torseurs}, \`a ceci pr\`es qu'il reste \`a 
%
%
%
%Donc le triangle exact pr\'ec\'edent se r\'e\'ecrit ainsi :
%$C_Z := \textup{C\^one}(\tau_{\leq 2} \R {p_X}_* \Gm_X \rightarrow R^2
%{p_Z}_* \Gm_Z [-2])$ :
%\begin{equation}
%\label{triangle exact 1}
%\Gm \to C_Z \to C_{\ov{Y}/X} \to \Gm[1] \, .
%\end{equation}
%
%
%\begin{lem}
%Le triangle exact (\ref{triangle exact}) est scind\'e.
%\end{lem}
%
%\begin{proof}
%Un point rationnel $x \in X(k)$ d\'efinit une section de ce
%triangle.
%\end{proof}
%
%La suite exacte longue d'hypercohomologie associ\'ee au triangle
%(\ref{triangle exact 1}) fournit les suites exactes suivantes :
%\begin{equation}
%\label{sel U}
%0 \to H^0(k, \Gm) \to \H^0(k, C_Z) \to \H^0(k, C_{\ov{Y}/X}) \to H^1(k, \Gm) = 0
%\end{equation}
%et
%\begin{equation}
%\label{sel PicBr}
%0 \to \H^1(k, C_Z) \to \H^1(k, C_{\ov{Y}/X}) \to H^2(k, \Gm) \to \H^2(k, C_Z) \to \H^2(k, C_{\ov{Y}/X}) \to H^3(k, \Gm) \, .
%\end{equation}
%Or par d\'efinition de $C_Z$, on a les isomorphismes suivants :
%$$\H^0(k, C_Z) = k[X]^* \, ,$$
%$$\H^1(k, C_Z) = \Pic(X) \, ,$$
%$$\H^2(k, C_Z) = \textup{Ker}(\Br(X) \xrightarrow{p^*} \Br(\ov{Z})) \, .$$
%Puisque $X(k) \neq \emptyset$, le triangle (\ref{triangle exact}) est scind\'e (voir
%lemme \ref{lem scinde}), donc les isomorphismes pr\'ec\'edents et les
%suites exacte (\ref{sel U}) et (\ref{sel PicBr}) impliquent les suivants :
%$$\H^0(k, \widehat{C}_X) \cong k[X]^*/k^* = U(X) \, ,$$
%$$\H^1(k, \widehat{C}_X) \cong \Pic(X) \, ,$$
%$$\H^2(k, \widehat{C}_X) \cong \textup{Ker}(\Br(X)/\Br(k)
%\xrightarrow{p^*} \Br(\ov{Z})) \, .$$
identifier le groupe $\textup{Ker}(\Br(X) \xrightarrow{p^*}
 \Br(\ov{Z}))$ au groupe $\Br_1(X,\ov{Y}) = \textup{Ker}(\Br(X) \to
 \Br(\ov{X}) \xrightarrow{\ov{\pi}^*} \Br(\ov{Y}))$, et \`a comparer le groupe
 $\textup{Ker}(H^3(k, \Gm) \to \H^3(k, \UPic'(p)))$ au groupe
 $\textup{Ker}(H^3(k, \Gm) \to H^3(X, \Gm))$. 

On dispose d'abord du fait suivant :
\begin{lem}
\label{lem surj Pic}
Le morphisme $\Pic(\ov{H'}) \rightarrow \Pic(\ov{H})$ est surjectif.
\end{lem}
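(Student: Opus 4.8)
The plan is to reduce the statement to reductive groups and then to an elementary surjectivity of cokernels of character modules, using the description of the Picard group of a connected reductive group. First I would replace $\ov H$ by $\ov H\red$. In characteristic zero the unipotent radical $\ov H\uu$ is isomorphic, as a variety, to an affine space, and the quotient $\ov H \to \ov H\red = \ov H/\ov H\uu$ is a torsor under $\ov H\uu$; since $\ov H\red$ is affine this torsor is Zariski-locally trivial, so $\ov H \cong \ov H\red \times \A^n$ as $\ov k$-varieties, and the pullback $\Pic(\ov H\red) \to \Pic(\ov H)$ is an isomorphism. As the morphism $\ov H \to \ov{H'}$ factors through $\ov H \to \ov H\red \to \ov{H'}$, the map $\Pic(\ov{H'}) \to \Pic(\ov H)$ is the composite of $\Pic(\ov{H'}) \to \Pic(\ov H\red)$ with this isomorphism, so it suffices to prove that $\Pic(\ov{H'}) \to \Pic(\ov H\red)$ is surjective.

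Next I would compute both Picard groups via the description of $\UPic$ of a connected reductive group due to Borovoi and van Hamel (\cite{BvH}): for a connected reductive $\ov k$-group with maximal torus $T$ and $T\sc$ the preimage of $T$ in the simply connected cover of its derived group, one has a canonical isomorphism $\Pic \cong \coker(\widehat T \to \widehat{T\sc})$, the map being restriction of characters along $T\sc \to T$. The key point is that $\ov H\red$ and its adjoint quotient $\ov{H'} = \ov H\red/Z_{\ov H\red}$ share the same simply connected cover $\ov H\sc$, hence the same torus $T\sc$, while a maximal torus of $\ov{H'}$ is $T' := T/Z_{\ov H\red}$. This yields $\Pic(\ov H\red) \cong \coker(\widehat T \to \widehat{T\sc})$ and $\Pic(\ov{H'}) \cong \coker(\widehat{T'} \to \widehat{T\sc})$.

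Finally, the quotient morphism $\ov H\red \to \ov{H'}$ restricts to the surjection $T \to T'$, which induces the inclusion $\widehat{T'} \hookrightarrow \widehat T$; moreover the two restriction maps into $\widehat{T\sc}$ agree on $\widehat{T'}$, since the composite $T\sc \to T \to T'$ computes both. Consequently the image of $\widehat{T'}$ in $\widehat{T\sc}$ is contained in that of $\widehat T$, so the induced map on cokernels $\Pic(\ov{H'}) \to \Pic(\ov H\red)$ is surjective, as wanted. I expect the only delicate point to be the bookkeeping in the middle step, namely checking the functoriality of the Borovoi--van Hamel formula with respect to the central quotient $\ov H\red \to \ov{H'}$ and verifying that the relevant maximal tori of $\ov H\sc$ genuinely coincide; once this is in place the surjectivity is purely formal.
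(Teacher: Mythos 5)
Your proof is correct, but it takes a genuinely different route from the paper's, which is only two citations long: $\Pic(\ov{H}\red) \to \Pic(\ov{H})$ is surjective because the Picard group of the unipotent radical is trivial, and then proposition 4.2 de \cite{FI} is applied to the surjection $\ov{H}\red \to \ov{H'}$, which has diagonalizable kernel $Z_{\ov{H}\red}$ and connected source and target. Your first step is the same reduction (you even obtain an isomorphism, via triviality of unipotent torsors over an affine base in characteristic zero and homotopy invariance of $\Pic$, which is more than needed). For the second step, instead of invoking \cite{FI} you reprove the special case at hand through the character-theoretic description $\Pic(\ov{H}\red) \cong \coker(\widehat{T} \to \widehat{T\sc})$ and $\Pic(\ov{H'}) \cong \coker(\widehat{T'} \to \widehat{T\sc})$; your key observations --- that $\ov{H}\red$ and its adjoint quotient $\ov{H'}$ share the simply connected cover $\ov{H}\sc$, that the preimage of $T' = T/Z_{\ov{H}\red}$ in $\ov{H}\sc$ is again $T\sc$, and that $\textup{im}(\widehat{T'}) \subset \textup{im}(\widehat{T})$ inside $\widehat{T\sc}$ --- are all correct. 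The functoriality you flag is a genuine point but is available: the identification arises from the functorial exact sequence $0 \to \widehat{G} \to \widehat{B} \to \Pic(G/B) \to \Pic(G) \to 0$ (\cite{San}, proposition 6.10, applied to the torsor $G \to G/B$) together with $\Pic(G/B) \cong \widehat{T\sc}$, and $\ov{H}\red$ and $\ov{H'}$ have canonically the same flag variety, so the two sequences compare along the identity; alternatively, the functoriality of \cite{BvH}, th\'eor\`eme 4.8, for torus-compatible morphisms suffices. What the paper's citation buys is brevity and generality (it covers any quotient by a normal diagonalizable subgroup, with no torus bookkeeping); what your argument buys is self-containedness --- it uses only the character-theoretic description of $\Pic$, a tool central to the paper anyway --- and an explicit description of the kernel of the surjection, namely the image of $\widehat{T}/\widehat{T'} \cong \widehat{Z_{\ov{H}\red}}$ in $\Pic(\ov{H'})$.
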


\begin{proof}
On note $\ov{H}\red$ le quotient de $\ov{H}$ par son radical unipotent. 
Le morphisme $\Pic(\ov{H}\red) \to \Pic(\ov{H})$ est surjectif car le groupe de Picard du radical unipotent est trivial. Le morphisme $\ov{H}\red \to \ov{H}'$ est un morphisme surjectif, \`a noyau diagonalisable, entre groupes connexes, par cons\'equent la proposition 4.2 de \cite{FI} assure que le morphisme $\Pic(\ov{H}') \to \Pic(\ov{H}\red)$ est surjectif. D'o\`u le lemme.
%
%
%On a alors un diagramme commutatif de suites exactes courtes :
%\begin{displaymath}
%\xymatrix{
%1 \ar[r] & Z_{\ov{H}} \ar[r] \ar[d] & \ov{H} \ar[r] \ar[d] & \ov{H}' \ar[r] \ar[d] & 1 \\
%1 \ar[r] & Z_{\ov{H}\red} \ar[r] & \ov{H}\red \ar[r] & \ov{H}'' \ar[r] & 1 \, ,
%}
%\end{displaymath}
%qui induit le carr\'e commutatif suivant :
%\begin{displaymath}
%\xymatrix{
%\Pic(\ov{H}'') \ar[r] \ar[d] & \Pic(\ov{H}\red) \ar[d] \\
%\Pic(\ov{H}') \ar[r] & \Pic(\ov{H}) \, .
%}
%\end{displaymath}
%Dans ce diagramme, le morphisme $\Pic(\ov{H}\red) \to \Pic(\ov{H})$ est surjectif car le groupe de Picard du radical unipotent est trivial, et le morphisme $\Pic(\ov{H}'') \to \Pic(\ov{H}\red)$ est surjectif par la proposition 4.2 de \cite{FI} (le morphisme $\ov{H}\red \to \ov{H}''$ est un morphisme surjectif, \`a noyau diagonalisable, entre groupes connexes). D'o\`u le lemme.
\end{proof}

Consid\'erons le diagramme commutatif exact suivant :
\begin{displaymath}
\xymatrix{
& \Pic(\ov{H'}) \ar[d]^{\Delta_{\ov{Z}/\ov{X}}} \ar[ld] & \\
\Pic(\ov{H}) \ar[r]_{\Delta_{\ov{Y}/\ov{X}}} & \Br(\ov{X}) \ar[r]^{\pi^*} \ar[d]^{p^*} & \Br(\ov{Y}) \ar[d]^= \\
 & \Br(\ov{Z}) \ar[r]^{{\pi'}^*} & \Br(\ov{Y}) \, .
}
\end{displaymath}
On d\'eduit alors du lemme \ref{lem surj Pic}, par une chasse au diagramme facile, que l'inclusion naturelle 
$$\textup{Ker}(\Br(X) \xrightarrow{p^*} \Br(\ov{Z})) \subset
\Br_1(X,\ov{Y})$$
est en fait une \'egalit\'e. 

Pour conclure la preuve du th\'eor\`eme \ref{theo torseurs}, montrons
 que l'on a un morphisme injectif 
$$\textup{Ker}(H^3(k,
 \Gm) \to \H^3(k, \UPic'(p))) \hookrightarrow \textup{Ker}(H^3(k, \Gm)
 \to H^3(X, \Gm)) \, .$$
Pour cela, on consid\`ere le carr\'e commutatif suivant, o\`u le
 morphisme horizontal inf\'erieur est extrait du triangle exact (\ref{triangle exact prime}) et les
 morphismes verticaux sont les morphismes \'evidents :
\begin{displaymath}
\xymatrix{
\Gm \ar[r]^= \ar[d] & \Gm \ar[d] \\
\UPic'(p) \ar[r] & \tau_{\leq 2} \R {p_X}_* \Gm_X \, .
}
\end{displaymath}
Or $\H^3(k, \tau_{\leq 2} \R {p_X}_* \Gm_X) \cong \textup{Ker}(H^3(X,
 \Gm) \to H^3(\ov{X}, \Gm))$ (le calcul est le m\^eme que celui de
 \cite{BvH}, 2.18), donc on d\'eduit imm\'ediatement de ce diagramme une inclusion
$$\textup{Ker}(H^3(k, \Gm) \to \H^3(k, \UPic'(p))) \subset \textup{Ker}(H^3(k, \Gm) \to H^3(X, \Gm)) \, ,$$
ce qui termine la preuve du th\'eor\`eme \ref{theo torseurs}.
\end{proof}

%On d\'eduit du th\'eor\`eme \ref{theo torseurs} le corollaire suivant :
\begin{cor}
\label{cor torseurs}
On conserve les hypoth\`eses du th\'eor\`eme \ref{theo torseurs}.
\begin{itemize} 
\item Supposons que $Z(k) \neq \emptyset$. Alors on a trois isomorphismes canoniques
$$U(X) \cong \H^0(k, C_{\ov{Y}/X}) \, , \, \, \, \Pic(X) \cong \H^1(k, C_{\ov{Y}/X}) \, , \, \, \, \Br_a(X,Y) \cong \H^2(k, C_{\ov{Y}/X}) \, .$$
%$$\Pic(X) \cong \H^1(k, C_{\ov{Y}/X}) \, ,$$
%$$\Br_a(X,Y) \cong \H^2(k, C_{\ov{Y}/X}) \, .$$
\item Supposons que $\Br(k)$ s'injecte dans $\Br(X)$. Alors on a des isomorphismes
$$U(X) \cong \H^0(k, C_{\ov{Y}/X}) \, , \, \, \, \Pic(X) \cong \H^1(k, C_{\ov{Y}/X}) \, , $$
%$$ \Br_a(X,\ov{Y}) \cong \H^2(k, C_{\ov{Y}/X}) \, ,$$
et une suite exacte 
$$0 \to \Br_a(X, \ov{Y}) \to \H^2(k, C_{\ov{Y}/X}) \to
      \textup{Ker}(H^3(k, \Gm) \to H^3(X, \Gm)) \, .$$
\item Supposons que $H^3(k, \Gm)$ s'injecte dans $H^3(X, \Gm)$. Alors on a des isomorphismes
$$U(X) \cong \H^0(k, C_{\ov{Y}/X}) \, , \, \, \, \Br_a(X,\ov{Y}) \cong \H^2(k, C_{\ov{Y}/X}) \, , $$
%$$ \Br_a(X,\ov{Y}) \cong \H^2(k, C_{\ov{Y}/X}) \, ,$$
et une suite exacte 
$$0 \to \Pic(X) \to \H^1(k, C_{\ov{Y}/X}) \to \textup{Ker}(\Br(k) \to \Br(X)) \, .$$
\end{itemize}
\end{cor}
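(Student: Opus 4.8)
The plan is to derive all three parts directly from the six-term exact sequence of Th\'eor\`eme \ref{theo torseurs}, specialising it under each hypothesis; the isomorphism $U(X) \cong \H^0(k, C_{\ov{Y}/X})$ is unconditional and is exactly the first assertion of that theorem, so only the terms involving $\Pic(X)$, $\Br_a(X,\ov{Y})$ and $N^3(k,\Gm)$ require attention. The one preliminary observation needed is that the map $\Br(k) \to \Br_1(X,\ov{Y})$ appearing in the theorem is induced by the structural morphism, i.e. it is the pullback $\Br(k) \to \Br(X)$: its image lands in $\Br_1(X,\ov{Y}) = \textup{Ker}(\Br(X) \to \Br(\ov{Y}))$ because $\Br(k) \to \Br(\ov{X})$ factors through $\Br(\ov{k}) = 0$. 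This identification follows from the naturality of the triangle $\Gm \to \UPic'(p) \to \UPic(p)$ and its comparison with $\tau_{\leq 2} \R {p_X}_* \Gm_X$ used in the proof of the theorem. Similarly, the end of that proof already identifies $\textup{Ker}(H^3(k,\Gm) \to \H^3(k,\UPic'(p)))$ with $N^3(k,\Gm) = \textup{Ker}(H^3(k,\Gm) \to H^3(X,\Gm))$, so no new cohomological computation is required.

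\emph{First part.} If $Z(k) \neq \emptyset$ then $X(k) \neq \emptyset$, obtained by pushing a $k$-point of $Z$ forward along $p : Z \to X$. A rational point splits the structural morphism $X \to \Spec(k)$, hence splits $\Br(k) \to \Br(X)$ and $H^i(k, \Gm) \to H^i(X,\Gm)$ for every $i$; in particular $\Br(k) \into \Br_1(X,\ov{Y})$ and $N^3(k,\Gm) = 0$. Feeding these into the six-term sequence, the arrow $\H^1(k, C_{\ov{Y}/X}) \to \Br(k)$ is zero, which yields $\Pic(X) \cong \H^1(k, C_{\ov{Y}/X})$; and the vanishing $N^3(k,\Gm) = 0$ forces the surjection $\Br_1(X,\ov{Y}) \onto \H^2(k, C_{\ov{Y}/X})$, whose kernel is the image of $\Br(k)$, giving $\Br_a(X,\ov{Y}) \cong \H^2(k, C_{\ov{Y}/X})$.

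\emph{Second part.} Now $\Br(k) \into \Br(X)$ is the standing hypothesis; since this map factors through $\Br_1(X,\ov{Y})$, the composite $\Br(k) \to \Br_1(X,\ov{Y})$ is again injective. As before this makes $\H^1(k, C_{\ov{Y}/X}) \to \Br(k)$ the zero map, whence $\Pic(X) \cong \H^1(k, C_{\ov{Y}/X})$, and the exact sequence of the theorem truncates to the asserted four-term sequence $0 \to \Br_a(X,\ov{Y}) \to \H^2(k, C_{\ov{Y}/X}) \to N^3(k,\Gm)$. Here one cannot pass to an isomorphism because $N^3(k,\Gm)$ need not vanish.

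\emph{Third part.} The hypothesis is exactly $N^3(k,\Gm) = 0$. Then $\H^2(k, C_{\ov{Y}/X}) \to N^3(k,\Gm) = 0$ shows $\Br_1(X,\ov{Y}) \onto \H^2(k, C_{\ov{Y}/X})$ with kernel the image of $\Br(k)$, so by definition $\Br_a(X,\ov{Y}) \cong \H^2(k, C_{\ov{Y}/X})$, this time without assuming $\Br(k) \to \Br_1(X,\ov{Y})$ injective. For the Picard term, the image of $\H^1(k, C_{\ov{Y}/X}) \to \Br(k)$ equals $\textup{Ker}(\Br(k) \to \Br_1(X,\ov{Y}))$, and since $\Br_1(X,\ov{Y}) \into \Br(X)$ this kernel coincides with $\textup{Ker}(\Br(k) \to \Br(X))$; this produces the exact sequence $0 \to \Pic(X) \to \H^1(k, C_{\ov{Y}/X}) \to \textup{Ker}(\Br(k) \to \Br(X))$. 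The only step that is not pure diagram chasing is the identification of the two edge maps discussed in the first paragraph---that $\Br(k) \to \Br_1(X,\ov{Y})$ is the structural pullback and that the $H^3$-kernel is $N^3(k,\Gm)$---and both are already supplied by the proof of Th\'eor\`eme \ref{theo torseurs}; thus I expect no genuine obstacle beyond bookkeeping in the long exact sequence.
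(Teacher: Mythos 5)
Your proof is correct. For the second and third items your argument coincides with the paper's: both are immediate diagram chases in the six-term sequence of Th\'eor\`eme \ref{theo torseurs}, and you rightly make explicit the one point the paper leaves implicit, namely that the map $\Br(k) \to \Br_1(X,\ov{Y})$ in that sequence is the structural pullback, so that it is injective under the hypothesis $\Br(k) \hookrightarrow \Br(X)$ and its kernel equals $\textup{Ker}(\Br(k) \to \Br(X))$ in general; as you say, this follows from the commutative square comparing $\Gm \to \UPic'(p)$ with $\Gm \to \tau_{\leq 2} \R {p_X}_* \Gm_X$ at the end of the proof of the theorem, read in degree $2$ rather than degree $3$. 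Where you genuinely diverge is the first item. The paper follows \cite{BvH}, 2.8: a point $z \in Z(k)$ splits the exact triangle $\Gm \to \UPic'(p) \to \UPic(p) \to \Gm[1]$ in the derived category, so the two connecting maps $\H^1(k, C_{\ov{Y}/X}) \to \Br(k)$ and $\H^2(k, C_{\ov{Y}/X}) \to H^3(k,\Gm)$ vanish outright. You instead push $z$ forward to a point of $X(k)$ and use it to verify simultaneously the hypotheses of the other two items ($\Br(k) \hookrightarrow \Br(X)$ and $N^3(k,\Gm) = 0$), then conclude by the same chase. Both routes are valid: the paper's splitting argument is self-contained at the level of the triangle and does not require identifying the edge maps, whereas your reduction exhibits item one as a formal consequence of items two and three, at the price of leaning on that identification --- which, since the other two items need it anyway, costs nothing extra.
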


\begin{proof}
\begin{itemize}
  \item On suppose que $Z(k) \neq \emptyset$. Alors comme dans la section 2.8 de \cite{BvH}, un point $z \in Z(k)$ d\'efinit une section du triangle exact (\ref{triangle exact evident}) :
$$ \Gm \to \UPic'(p : Z \to X) \to \UPic(p : Z \to X) \to \Gm[1] \, .$$
L'existence de cette section assure que les morphismes $\H^1(k, C_{\ov{Y}/X}) \to \Br(k)$ et $\H^2(k, C_{\ov{Y}/X}) \to H^3(k, \Gm)$ dans la suite exacte du th\'eor\`eme \ref{theo torseurs} sont des morphismes nuls, d'o\`u le premier point du corollaire.
\item Le th\'eor\`eme \ref{theo torseurs} assure
      imm\'ediatement les deux autres points du corollaire.
\end{itemize}
\end{proof}

%\end{proof}

%On conclut alors la preuve du th\'eor\`eme en combinant les lemmes
%\ref{}, \ref{}, \ref{} et \ref{}.
%\end{proof}

\section{Groupe de Brauer alg\'ebrique d'un torseur}
\label{section torseurs}
Dans cette section, on applique les r\'esultats g\'en\'eraux de la section \ref{section Brauer torseurs descente} \`a la situation d'un torseur $\pi : Y \xrightarrow{H} X$ d\'efini sur $k$. Ceci est clairement un cas particulier de torseur avec donn\'ee de recollement. On d\'eduit donc des sections pr\'ec\'edentes les r\'esultats suivants :
\begin{theo}
\label{theo torseurs k}
Soit $k$ un corps de caract\'eristique nulle, $X$ une $k$-vari\'et\'e lisse g\'eom\'etriquement int\`egre, $H$ un $k$-groupe lin\'eaire connexe et $\pi : Y \xrightarrow{H} X$ un $X$-torseur sous $H$. On a alors un isomorphisme canonique, fonctoriel en
$(X,Y,H,\pi)$ :
$$U(X) \xrightarrow{\cong} \H^0(k, C_{\ov{Y}/X}) \, ,$$
et une suite exacte de groupes commutatifs, fonctorielle en $(X, Y, H, \pi)$ :
$$0 \to \Pic(X) \to \H^1(k, C_{\ov{Y}/X}) \to \Br(k) \to \Br_1(X,Y) \to
 \H^2(k,C_{\ov{Y}/X}) \to N^3(k, \Gm) \, ,$$
o\`u $N^3(k, \Gm) := \textup{Ker}(H^3(k, \Gm) \to H^3(X, \Gm))$.
\end{theo}

On en d\'eduit aussi les corollaires suivants. Le premier est \'evident :
\begin{cor}
Avec les notations du th\'eor\`eme \ref{theo torseurs k} :
\begin{itemize} 
\item
supposons que $\Br(k)$ s'injecte dans $\Br(X)$. Alors on a des isomorphismes
$$U(X) \xrightarrow{\cong} \H^0(k, C_{\ov{Y}/X}) \, , \, \, \, \Pic(X) \xrightarrow{\cong} \H^1(k, C_{\ov{Y}/X}) \, ,$$
%$$\Br_a(X,Y) \xrightarrow{\cong} \H^1(k, C_{\ov{Y}/X}) \, ,$$
et une suite exacte
$$0 \to \Br_a(X,Y) \to \H^2(k, C_{\ov{Y}/X}) \to \textup{Ker}(H^3(k,\Gm)
     \to \H^3(X, \Gm)) \, .$$

\item supposons que $H^3(k, \Gm)$ s'injecte dans $H^3(X, \Gm)$. Alors on a des isomorphismes
$$U(X) \xrightarrow{\cong} \H^0(k, C_{\ov{Y}/X}) \, , \, \, \, \Br_a(X,Y) \xrightarrow{\cong} \H^2(k, C_{\ov{Y}/X}) \, ,$$
%$$\Br_a(X,Y) \xrightarrow{\cong} \H^1(k, C_{\ov{Y}/X}) \, ,$$
et une suite exacte
$$0 \to \Pic(X) \to \H^1(k, C_{\ov{Y}/X}) \to \textup{Ker}(\Br(k) \to \Br(X)) \, .$$
\end{itemize}
\end{cor}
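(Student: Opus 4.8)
The plan is to read both assertions off the six-term exact sequence and the isomorphism $U(X) \cong \H^0(k, C_{\ov{Y}/X})$ furnished by Theorem~\ref{theo torseurs k}; no new geometric input is needed, only a bookkeeping of which maps the two injectivity hypotheses annihilate. I keep in mind throughout that $\Br_a(X,Y)$ is \emph{by definition} the quotient $\Br_1(X,Y)/\Br(k)$, i.e. the quotient of $\Br_1(X,Y)$ by the image of $\Br(k)$, and that $\Br_1(X,Y)$ sits inside $\Br(X)$, so that the arrow $\Br(k) \to \Br_1(X,Y)$ appearing in the theorem is simply the corestriction of $\Br(k) \to \Br(X)$.

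For the first point I would assume $\Br(k) \hookrightarrow \Br(X)$. Since $\Br_1(X,Y) \subset \Br(X)$, this forces $\Br(k) \to \Br_1(X,Y)$ to be injective. Exactness of the theorem's sequence at $\Br(k)$ then shows that the preceding arrow $\H^1(k, C_{\ov{Y}/X}) \to \Br(k)$ vanishes, whence $\Pic(X) \to \H^1(k, C_{\ov{Y}/X})$ is onto; combined with its injectivity (the sequence starts with $0 \to \Pic(X)$) this yields the isomorphism $\Pic(X) \cong \H^1(k, C_{\ov{Y}/X})$. The remaining terms now form the exact sequence $0 \to \Br(k) \to \Br_1(X,Y) \to \H^2(k, C_{\ov{Y}/X}) \to N^3(k, \Gm)$, and dividing the first two groups by $\Br(k)$ produces exactly $0 \to \Br_a(X,Y) \to \H^2(k, C_{\ov{Y}/X}) \to N^3(k, \Gm)$, as claimed.

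For the second point I would assume $H^3(k, \Gm) \hookrightarrow H^3(X, \Gm)$, so that $N^3(k, \Gm) = 0$. Exactness at $\H^2(k, C_{\ov{Y}/X})$ then makes $\Br_1(X,Y) \to \H^2(k, C_{\ov{Y}/X})$ surjective with kernel the image of $\Br(k)$; since $\Br_a(X,Y)$ is precisely $\Br_1(X,Y)$ modulo that image, this gives $\Br_a(X,Y) \cong \H^2(k, C_{\ov{Y}/X})$. For the low-degree part, exactness at $\Br(k)$ identifies the image of $\H^1(k, C_{\ov{Y}/X}) \to \Br(k)$ with $\textup{Ker}(\Br(k) \to \Br_1(X,Y))$, which equals $\textup{Ker}(\Br(k) \to \Br(X))$ because $\Br_1(X,Y) \hookrightarrow \Br(X)$; together with the injectivity of $\Pic(X) \to \H^1(k, C_{\ov{Y}/X})$ this is exactly the asserted exact sequence. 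In both cases the identification $U(X) \cong \H^0(k, C_{\ov{Y}/X})$ is copied verbatim from the theorem.

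Since everything is extracted from an exact sequence already proved, there is no genuine obstacle; the only point demanding care --- and presumably the reason the corollary is flagged as \'evident rather than trivial --- is the consistent use of the definition $\Br_a = \Br_1/\Br(k)$ as a quotient by a possibly non-injective image of $\Br(k)$, together with the compatibility of $\Br(k) \to \Br_1(X,Y)$ with $\Br(k) \to \Br(X)$ that lets the two kernels in the second point be identified.
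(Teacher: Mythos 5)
Your proposal is correct and follows exactly the route the paper intends: the corollary is stated there as an immediate consequence (``\'evident'') of the six-term exact sequence and the isomorphism $U(X) \cong \H^0(k, C_{\ov{Y}/X})$ of Theorem \ref{theo torseurs k}, and your diagram chase spells out precisely that deduction, including the needed compatibility of $\Br(k) \to \Br_1(X,Y)$ with $\Br(k) \to \Br(X)$ (which in the paper follows from the commutative square $\Gm \to \tau_{\leq 2}\R{p_X}_*\Gm_X$ used at the end of the proof of Theorem \ref{theo torseurs}). No gaps.
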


Le second concerne les torseurs munis d'un point rationnel (voir \cite{BvH} 2.8 pour les notations $\ov{k}(Y)^*_{y,1}$ et $\textup{Div}(\ov{Y})_y$) :
\begin{cor}
\label{coro torseurs pt ratio}
Avec les notations du th\'eor\`eme \ref{theo torseurs k}, supposons qu'il existe $y \in Y(k)$. Alors on a trois isomorphismes
$$U(X) \xrightarrow{\cong} \H^0(k, C_{\ov{Y}/X}) \, , \, \, \,  \Pic(X) \xrightarrow{\cong} \H^1(k, C_{\ov{Y}/X}) \, , \, \, \, \Br_a(X,Y) \xrightarrow{\cong} \H^2(k, C_{\ov{Y}/X}) \, .$$
%$$\Pic(X) \xrightarrow{\cong} \H^1(k, C_{\ov{Y}/X}) \, ,$$ 
%$$\Br_a(X,Y) \xrightarrow{\cong} \H^1(k, C_{\ov{Y}/X}) \, .$$
En outre, le complexe $C_{\ov{Y}/X}$ est canoniquement quasi-isomorphe au complexe de modules galoisiens
$$\textup{C\^one}\left([\ov{k}(Y)^*_{y,1} \to \textup{Div}(\ov{Y})_y] \xrightarrow{i_y^*} [\ov{k}(H)^*_{e,1} \to \textup{Div}(\ov{H})_e]\right) \, ,$$
(o\`u $i_y : H \to Y$ est d\'efinie par $h \mapsto y.h$), i.e. \`a 
$$[\ov{k}(Y)^*_{y,1} \to \ov{k}(H)^*_{e,1} \oplus \textup{Div}(\ov{Y})_y \to \textup{Div}(\ov{H})_e]  \, .$$
\end{cor}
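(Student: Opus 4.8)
Le plan est de traiter d'abord les trois isomorphismes, puis le quasi-isomorphisme. Pour la première partie, il suffit d'invoquer le premier point du corollaire \ref{cor torseurs}. En effet, le morphisme de passage au quotient $Y \to Z$ est défini sur $k$, donc l'image $z \in Z(k)$ du point rationnel $y \in Y(k)$ montre que $Z(k) \neq \emptyset$. Le corollaire \ref{cor torseurs} fournit alors directement les trois isomorphismes canoniques $U(X) \cong \H^0(k, C_{\ov{Y}/X})$, $\Pic(X) \cong \H^1(k, C_{\ov{Y}/X})$ et $\Br_a(X,Y) \cong \H^2(k, C_{\ov{Y}/X})$.

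Pour la seconde partie, rappelons que la proposition \ref{prop Z X torseurs} donne $C_{\ov{Y}/X} = C_{\ov{Z}/X}$, et que par construction on dispose du triangle exact $C_{\ov{Y}/X} \to \UPic(\ov{Z}) \xrightarrow{\varphi_Z} \UPic(\ov{H'}) \to C_{\ov{Y}/X}[1]$, où $\varphi_Z$ est le morphisme de \cite{BvH}. L'objectif est de construire un quasi-isomorphisme canonique $C_{\ov{Y}/X} \cong \textup{C\^one}(\varphi_Y)[-1]$, où $\varphi_Y := i_y^* : \UPic(\ov{Y}) \to \UPic(\ov{H})$. Les points de base $y \in Y(k)$, $z \in Z(k)$, $e \in H(k)$ et $\bar{e} \in H'(k)$ sont compatibles (on a $\sigma(y) = z$, $\rho(e) = \bar{e}$, $i_y(e) = y$ et $j_z(\bar{e}) = z$, où $\sigma : Y \to Z$ et $\rho : H \to H'$ désignent les projections) : en utilisant les représentants pointés de \cite{BvH} 2.8, les morphismes $\varphi_Y = i_y^*$ et $\varphi_Z = j_z^*$ sont alors donnés par image réciproque explicite de fonctions et de diviseurs, et $\textup{C\^one}(\varphi_Y)[-1]$ s'identifie aussitôt au complexe $[\ov{k}(Y)^*_{y,1} \to \ov{k}(H)^*_{e,1} \oplus \textup{Div}(\ov{Y})_y \to \textup{Div}(\ov{H})_e]$ de l'énoncé, ce qui donne la reformulation voulue. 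Le morphisme de $X$-torseurs $\sigma : Y \to Z$, recouvrant $\rho : H \to H'$, induit un carré commutatif reliant $\varphi_Z$ et $\varphi_Y$ par les images réciproques $\sigma^* : \UPic(\ov{Z}) \to \UPic(\ov{Y})$ et $\rho^* : \UPic(\ov{H'}) \to \UPic(\ov{H})$, d'où un morphisme canonique $C_{\ov{Y}/X} \to \textup{C\^one}(\varphi_Y)[-1]$.

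Il reste à montrer que ce morphisme est un quasi-isomorphisme. Par symétrie du complexe total d'un carré commutatif (lemme du $3 \times 3$, ou axiome de l'octaèdre), son cône s'identifie au cône du morphisme $\textup{C\^one}(\sigma^*) \to \textup{C\^one}(\rho^*)$ induit par $\varphi_Z$ et $\varphi_Y$. On factorise $\sigma$ en $\ov{Y} \xrightarrow{\sigma_1} \ov{Z'} \xrightarrow{\sigma_2} \ov{Z}$ avec $\ov{Z'} = \ov{Y}/\ov{H}\uu$, et $\rho$ en $\ov{H} \xrightarrow{\rho_1} \ov{H}\red \xrightarrow{\rho_2} \ov{H'}$. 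Les couches unipotentes $\sigma_1$ et $\rho_1$ sont des torseurs sous le groupe unipotent $\ov{H}\uu$ ; comme $\widehat{\ov{H}\uu} = 0$ et $\Pic(\ov{H}\uu) = 0$ en caractéristique nulle, la suite exacte de Sansuc (cf. \cite{San}) montre que $\sigma_1^* : \UPic(\ov{Z'}) \to \UPic(\ov{Y})$ et $\rho_1^* : \UPic(\ov{H}\red) \to \UPic(\ov{H})$ sont des quasi-isomorphismes, d'où $\textup{C\^one}(\sigma^*) \cong \textup{C\^one}(\sigma_2^*)$ et $\textup{C\^one}(\rho^*) \cong \textup{C\^one}(\rho_2^*)$. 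On est ainsi ramené à la couche diagonalisable : $\sigma_2 : \ov{Z'} \to \ov{Z}$ et $\rho_2 : \ov{H}\red \to \ov{H'}$ sont des torseurs sous $\ov{D} := Z_{\ov{H}\red}$, insérés dans un carré cartésien, car la fibre de $\ov{Z} \to \ov{X}$ au-dessus de $x := \pi(y)$ est $\ov{H'}$ et l'image réciproque de $\sigma_2$ au-dessus de cette fibre est $\rho_2$. Or, pour un torseur sous $\ov{D}$, le cône de l'image réciproque $\UPic(\textup{base}) \to \UPic(\textup{espace total})$ est canoniquement isomorphe à $\UPic(\ov{D})$, fonctoriellement en le torseur ; la restriction à la fibre cartésienne identifie donc $\textup{C\^one}(\sigma_2^*)$ et $\textup{C\^one}(\rho_2^*)$ via l'identité de $\UPic(\ov{D})$. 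Le morphisme $\textup{C\^one}(\sigma^*) \to \textup{C\^one}(\rho^*)$ est par conséquent un quasi-isomorphisme, son cône est acyclique, et le morphisme initial $C_{\ov{Y}/X} \to \textup{C\^one}(\varphi_Y)[-1]$ est bien un quasi-isomorphisme.

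Le principal obstacle est cette dernière identification : que le « UPic relatif » d'un torseur sous $\ov{D}$ diagonalisable (éventuellement non connexe, puisque $\ov{D} = Z_{\ov{H}\red}$ peut l'être) soit canoniquement $\UPic(\ov{D})$, et que cette identification soit naturelle par changement de base le long du carré cartésien. C'est là qu'interviennent les suites exactes de type Sansuc, ou bien un calcul direct sur les complexes pointés ; la factorisation à travers le radical unipotent sert précisément à isoler proprement ce cas diagonalisable. Il faudra également vérifier soigneusement la validité de la réduction octaédrique (symétrie du complexe total du carré) et le fait que la compatibilité des quatre points de base rende bien définis tous les morphismes d'image réciproque sur les complexes pointés.
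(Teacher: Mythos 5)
Your first part is fine and is exactly what the paper does implicitly: the image $z\in Z(k)$ of $y$ gives $Z(k)\neq\emptyset$, and the first point of corollaire \ref{cor torseurs} yields the three isomorphisms. Your strategy for the quasi-isomorphism (octahedral reduction to $\textup{C\^one}(\sigma^*)\to\textup{C\^one}(\rho^*)$, then factorization through the unipotent and diagonalizable layers) is genuinely different from the paper's, and the unipotent layer is handled correctly. But the step you yourself flag as ``le principal obstacle'' is a real gap, and worse, the general principle you invoke there is false. First, for non-connected $\ov{D}:=Z_{\ov{H}\red}$ one has $\UPic(\ov{D})\neq\widehat{\ov{D}}[0]$ (for $\ov{D}=\mu_n$, units modulo constants give $(\ov{k}^*)^{n-1}$, not $\Z/n$), so the claim must at least be restated with $\widehat{\ov{D}}$ in degree $0$. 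Second, and more seriously, even the corrected claim is false for a general torsor under a non-connected diagonalizable group: the descent sequence $0\to U(B)\to U(V)\to\widehat{\ov{D}}\to\Pic(B)\to\Pic(V)$ is exact, but $\Pic(B)\to\Pic(V)$ need not be surjective, so the cone of $\UPic(B)\to\UPic(V)$ can have nonzero $\mathcal{H}^1$. Concretely, an \'etale isogeny of elliptic curves $E\to E'=E/(\Z/n)$ over $\ov{k}$ is a torsor under $\Z/n\cong\mu_n$, and the cokernel of $\Pic(\ov{E'})\to\Pic(\ov{E})$ surjects onto $\Z/n$ (degrees in N\'eron--Severi are multiplied by $n$), so there the cone is not concentrated in degree $0$. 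Hence your identification of $\textup{C\^one}(\sigma_2^*)$ and $\textup{C\^one}(\rho_2^*)$ with $\widehat{\ov{D}}[0]$ cannot follow from the $\ov{D}$-torsor structure and the cartesian square alone.

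What is actually needed is the surjectivity of $\sigma_2^*:\Pic(\ov{Z})\to\Pic(\ov{Z'})$ (the group side $\rho_2^*$ is fine, by \cite{FI}, i.e. lemme \ref{lem surj Pic}). This surjectivity does hold, but proving it uses precisely the structure your reduction discarded, namely that $\ov{Z'}\to\ov{X}$ and $\ov{Z}\to\ov{X}$ are torsors under the \emph{connected} groups $\ov{H}\red$ and $\ov{H'}$: given $p\in\Pic(\ov{Z'})$, lift $\varphi_{Z'}(p)\in\Pic(\ov{H}\red)$ to some $q\in\Pic(\ov{H'})$ by lemme \ref{lem surj Pic}, use the compatibility of the two Sansuc sequences (\cite{San}, prop. 6.10, ou \cite{BD}, th. 2.8) for $\ov{Z'}/\ov{X}$ and $\ov{Z}/\ov{X}$ to see that $\Delta_{Z/X}(q)=\Delta_{Z'/X}(\rho_2^*q)=0$ in $\Br(\ov{X})$, write $q=\varphi_Z(r)$, and conclude $p-\sigma_2^*r\in\textup{im}(\Pic(\ov{X}))\subset\textup{im}(\sigma_2^*)$. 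This diagram chase, together with the degree $0$ and $1$ identifications, is essentially the paper's own proof, which works directly with the pointed complexes of \cite{BvH} 2.8 and shows degree by degree that $\alpha:C_{\ov{Z}/X,z}\to C_{\ov{Y}/X,y}$ induces isomorphisms on $\mathcal{H}^0$, $\mathcal{H}^1$, $\mathcal{H}^2$ (the degree $2$ statement being that $\Pic(\ov{H'})/\Pic(\ov{Z})\to\Pic(\ov{H})/\Pic(\ov{Y})$ is an isomorphism). So your route can be completed, but only by re-importing these ingredients; as written, the argument has a genuine hole at its central step.
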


\begin{proof}
La seule chose \`a d\'emontrer est le quasi-isomorphisme 
$$C_{\ov{Y}/X} \to C_{\ov{Y}/X, y} := \textup{C\^one}\left([\ov{k}(Y)^*_{y,1} \to \textup{Div}(\ov{Y})_y] \xrightarrow{i_y^*} [\ov{k}(H)^*_{e,1} \to \textup{Div}(\ov{H})_e]\right) \, .$$
Pour ce faire, il est clair (voir \cite{BvH} 2.8) que $C_{\ov{Y}/X}$ est quasi-isomorphe \`a 
$$C_{\ov{Z}/X, z} := \textup{C\^one}\left([\ov{k}(Z)^*_{z,1} \to \textup{Div}(\ov{Z})_z] \xrightarrow{i_z^*} [\ov{k}(H')^*_{e,1} \to \textup{Div}(\ov{H'})_e] \right) \, ,$$
o\`u $z \in Z(k)$ est l'image de $y$.
Or on a un diagramme commutatif naturel de complexes de modules galoisiens :
\begin{displaymath}
\xymatrix{
[\ov{k}(Z)^*_{z,1} \to \textup{Div}(\ov{Z})_z] \ar[r] \ar[d] & [\ov{k}(H')^*_{e,1} \to \textup{Div}(\ov{H'})_e] \ar[d] \\
[\ov{k}(Y)^*_{y,1} \to \textup{Div}(\ov{Y})_y] \ar[r] & [\ov{k}(H)^*_{e,1} \to \textup{Div}(\ov{H})_e] \, 
}
\end{displaymath}
dont on d\'eduit un morphisme canonique de complexes de modules galoisiens
$$\alpha : C_{\ov{Z}/X,z} \to C_{\ov{Y}/X,y} \, .$$
Montrons que ce dernier est un quasi-isomorphisme. Pour $i= 0, 1, 2$, on d\'efinit 
$$f_i := \mathcal{H}^i(\alpha) : \mathcal{H}^i(C_{\ov{Z}/X,z}) \to \mathcal{H}^i(C_{\ov{Y}/X,y}) \, .$$
Tout d'abord, le morphisme $f_0$ s'int\`egre dans le diagramme commutatif exact suivant :
\begin{displaymath}
\xymatrix{
0 \ar[r] & \mathcal{H}^0(C_{\ov{Z}/X,z}) \ar[r] \ar[d]^{f_0} & U(\ov{Z}) \ar[r]
\ar[d] & U(\ov{H'}) \ar[d] \\
0 \ar[r] & \mathcal{H}^0(C_{\ov{Y}/X,y}) \ar[r] & U(\ov{Y}) \ar[r] & U(\ov{H}) \, ,
}
\end{displaymath}
ce qui assure (en utilisant par exemple la proposition 6.10 de \cite{San}) que $f_0$ est un isomorphisme qui identifie
$\mathcal{H}^0(C_{\ov{Z}/X,z})$ et $\mathcal{H}^0(C_{\ov{Y}/X,y})$ \`a $U(\ov{X})$.

En degr\'e $2$, il est clair que le morphisme $f_2$ s'identifie au morphisme canonique 
$$\Pic(\ov{H}') / \Pic(\ov{Z}) \to \Pic(\ov{H}) / \Pic(\ov{Y}) \, .$$
Or on a un diagramme commutatif de suites exactes (voir \cite{San}, proposition 6.10 ou \cite{BD}, th\'eor\`eme 2.8) :
\begin{displaymath}
\xymatrix{
\Pic(\ov{Z}) \ar[r] \ar[d] & \Pic(\ov{H}') \ar[r] \ar[d] & \Br(\ov{X}) \ar[d]^= \\
\Pic(\ov{Y}) \ar[r] & \Pic(\ov{H}) \ar[r] & \Br(\ov{X}) \, ,
}
\end{displaymath}
et le lemme \ref{lem surj Pic} assure que le morphisme central est surjectif. Une chasse au diagramme assure alors que le morphisme 
$$\Pic(\ov{H}') / \Pic(\ov{Z}) \to \Pic(\ov{H}) / \Pic(\ov{Y})$$
est un isomorphisme

%On montre que c'est un isomorphisme en identifiant (via le th\'eor\`eme 2.8 de \cite{BD}) ce morphisme \`a l'inclusion 
%$$\textup{Ker}(\Br(\ov{X}) \to \Br(\ov{Z})) \to \textup{Ker}(\Br(\ov{X}) \to \Br(\ov{Y})) \, ,$$
%qui est une \'egalit\'e en utilisant \cite{BD} et la surjectivit\'e de $\textup{Pic}(\ov{H'}) \to \Pic(\ov{H})$. Donc $f_2$ est bien un isomorphisme.

De la m\^eme fa\c con, en degr\'e $1$, la cohomologie des deux complexes $C_{\ov{Y}/X,y}$ et $C_{\ov{Z}/X,z}$ est exactement $\Pic(\ov{X})$ et le morphisme $f_1$ est l'identit\'e de $\Pic(\ov{X})$.

Finalement, tous les $f_i$ sont des isomorphismes, donc $\alpha$ est un quasi-isomorphisme.
\end{proof}

\begin{exs}
Dans la situation du th\'eor\`eme \ref{theo torseurs k}, l'objet $C_{\ov{Y}/X}$ est parfois repr\'esentable par un complexe explicite de modules galoisiens d\'efinis \`a partir de $Y$ et $H$ (sans faire intervenir $Z$), comme le montrent les exemples suivants :
\begin{itemize}
%  \item Dans tous les cas, si on d\'efinit $Z := Y / Z_H$, on a un
%    quasi-isomorphisme canonique :
%$$\alpha : C_{\ov{Y}/X} \to [\ov{k}(Z)^*/\ov{k}^* \to \textup{Div}(\ov{Z})
%\xrightarrow{\varphi'} \Pic'(\ov{Z}/\ov{X})] \, .$$
\item Supposons qu'il existe $y \in Y(k)$. Alors on a montr\'e au
      corollaire \ref{coro torseurs pt ratio} l'existence d'un quasi-isomorphisme canonique :
$$C_{\ov{Y}/X} \rightarrow [\ov{k}(Y)^*_{y,1} / \ov{k}^* \to \textup{Div}(\ov{Y})_y \oplus \ov{k}(H)^*_{e,1}
/ k^* \to \textup{Div}(\ov{H})_e] \, .$$
\item Supposons que $H\tor = 0$. Alors on a un quasi-isomorphisme canonique :
$$C_{\ov{Y}/X} \to [\ov{k}(Y)^*/\ov{k}^* \to \textup{Div}(\ov{Y})
\xrightarrow{\varphi_1} \Pic(\ov{H})] \, .$$
%La preuve de ce fait est semblable \`a celle du corollaire \ref{coro torseurs pt ratio}.
%\item Supposons que $\Pic(\ov{Y}) = 0$. Alors on a un
%  quasi-isomorphisme canonique
%$$C_{\ov{Y}/X} \rightarrow [U(\ov{Y}) \xrightarrow{\varphi_0} \ov{k}(H)^*/\ov{k}^* \to \textup{Div}(\ov{H})] \, ,$$
%(voir preuve du corollaire \ref{coro torseurs pt ratio}).
\end{itemize}
\end{exs}

\section{Groupe de Brauer des espaces homog\`enes}
Dans cette section, on donne une autre application des r\'esultats g\'en\'eraux de la section \ref{section Brauer torseurs descente}, application aux espaces homog\`enes \`a stabilisateurs lin\'eaires connexes, ne poss\'edant pas n\'ecessairement de point rationnel.

Consid\`erons un groupe alg\'ebrique connexe $G$ sur un corps $k$ de caract\'eristique nulle, et un espace homog\`ene (\`a gauche) $X$ de $G$ sur $k$. Si on choisit $\ov{x} \in X(\ov{k})$, on obtient un morphisme
$$\ov{\pi}_{\ov{x}} : \ov{G} \xrightarrow{\ov{H}} \ov{X}$$
d\'efini par $\ov{\pi}_{\ov{x}}(\ov{g}) := \ov{g}.\ov{x}$, qui est un torseur (\`a droite) sous le $\ov{k}$-groupe lin\'eaire $\ov{H} := \textup{Stab}_{\ov{G}}(\ov{x})$. 
Sur $\ov{k}$, on dispose d'un diagramme commutatif naturel :
\begin{displaymath}
\xymatrix{
\ov{G} \ar[r]^{\ov{H}\uu} \ar[ddr]_{\ov{H}} & \ov{Z'} \ar[dd]^{\ov{H}\red} \ar[rd]^{Z_{\ov{H}\red}} & \\
& & \ov{Z} \ar[ld]^{\ov{H'}} \\
& \ov{X} & \, .
}
\end{displaymath}
Remarquons que $\ov{G}$ est muni d'une structure de $\ov{X}$-torseur \`a droite sous $\ov{H}$ et d'une action \`a gauche de $\ov{G}$. Cela induit sur $\ov{Z}$ une structure de $\ov{X}$-torseur \`a droite sous $\ov{H'}$ et une action \`a gauche de $\ov{G}$.

La donn\'ee de $\ov{x}$ d\'efinit une donn\'ee de recollement sur $\ov{\pi}_{\ov{x}} : \ov{G} \to \ov{X}$ (voir \cite{HSk}, proposition 3.3), i.e. une extension localement scind\'ee de groupes topologiques
$$1 \to \ov{H}(\ov{k}) \to \textup{SAut}_G(\ov{G} / X) \to \Gamma_k \to 1$$
v\'erifiant les conditions de la d\'efinition \ref{defi rec} et de la d\'efinition 2.1 de \cite{HSk}. Par fonctorialit\'e, cette suite induit une suite exacte
\begin{equation}
\label{donnee desc esp hom}
1 \to \ov{H'}(\ov{k}) \to \textup{SAut}_G(\ov{Z} / X) \to \Gamma_k \to 1
\end{equation}
o\`u $\textup{SAut}_G(\ov{Z} / X)$ d\'esigne le sous-groupe de $\textup{SAut}(\ov{Z} / X)$ form\'e des $\varphi$ tels que
$$\varphi(g.z) = (^{q(\varphi)}g).\varphi(z) \, ,$$
($q : \textup{SAut}(\ov{Z} / X) \to \Gamma_k$ est le morphisme \'evident et l'action de $\ov{G}$ sur $\ov{Z}$ est celle mentionn\'ee plus haut).

On est donc dans le cadre g\'en\'eral d\'evelopp\'e \`a la section \ref{section Brauer torseurs descente}. Remarquons que le fait que la classe de la donn\'ee de recollement sur $\ov{Z}/X$ soit neutre (i.e. que l'extension (\ref{donnee desc esp hom}) soit scind\'ee) assure non seulement que le torseur $\ov{Z} / \ov{X}$ descende en $Z / X$, mais aussi que l'action de $\ov{G}$ sur $\ov{Z}$ descende en une action de $G$ sur $Z$ compatible \`a l'action sur $X$, faisant de $Z$ un espace homog\`ene de $G$ \`a stabilisateur $\textup{Ker}(\ov{H} \to \ov{H'})$.

Dans la suite, on sera amen\'e \`a faire l'une ou l'autre des hypoth\`eses suivantes :
\begin{enumerate}
  \item L'espace homog\`ene $X$ a un point rationnel, i.e. $X(k) \neq \emptyset$.
    \item Le groupe $G$ est lin\'eaire et $\Pic(\ov{G}) = 0$, i.e. $G\abvar = 0$ et $G\ss$ est simplement connexe.
\end{enumerate}

Sous l'une ou l'autre de ces hypoth\`eses, on va associer \`a $X$ (\`a
la paire $(H,G)$) un complexe de modules galoisiens de longueur $3$ not\'e $\widehat{C}_X := [M_1 \to M_2 \to M_3]$
(par convention, $M_1$ est en degr\'e $0$ et $M_3$ en degr\'e $2$)
et construire un morphisme canonique de groupes ab\'eliens $\kappa_X : \H^2(k,\widehat{C}_X) \to \Br_a(X,G)$. Pour ce faire, on va relier le
complexe $\widehat{C}_X$ au complexe $C_{\ov{G}/X}$ (cf section
\ref{section donnee descente}), puis appliquer les r\'esultats
g\'en\'eraux de la section \ref{section Brauer torseurs descente}.

\subsection{Le complexe associ\'e \`a un espace homog\`ene}
%Dans ce paragraphe, on construit le complexe $\widehat{C}_X$.

\subsubsection{Complexe associ\'e \`a un groupe alg\'ebrique connexe}
On commence par construire la sous-vari\'et\'e semi-ab\'elienne maximale d'un groupe alg\'ebrique connexe.

Si $G / k$ est un groupe r\'eductif, on note $G\ss$ son sous-groupe d\'eriv\'e et $G\sc$ le
rev\^etement simplement connexe de $G\ss$. On dispose donc d'un morphisme naturel $\rho : G\sc \to G$. On notera $Z_G$ le centre de $G$, $Z_{G\sc}$ celui de $G\sc$, et $T_G$ un $k$-tore maximal de $G$. On pose
$T_{G\sc} := \rho^{-1}(T_G)$, qui est un tore maximal de $G\sc$.

D\'esormais $G$ est un $k$-groupe alg\'ebrique connexe.

Par un th\'eor\`eme de Rosenlicht (voir \cite{Ros}, corollaire 3 du th\'eor\`eme 12), il existe $D \subset G$ un
$k$-sous-groupe distingu\'e connexe tel que $G\li := G / D$ soit
lin\'eaire et v\'erifie la propri\'et\'e universelle suivante : pour
tout $k$-groupe lin\'eaire $H$ et tout morphisme $f : G \to H$, il
existe un unique morphisme $f\li : G\li \to H$ tel que $f$ se factorise par le quotient $G \to G\li$.
%suivant commute 
%\begin{displaymath}
%\xymatrix{
%G \ar[rr]^f \ar[rd] & & H \\
%& G\li \ar[ru]_{f\li} & \, .
%}
%\end{displaymath}
Outre la suite exacte ainsi obtenue :
$$1 \to D \to G \xto{\pi} G\li \to 1 \, ,$$
on dispose du th\'eor\`eme de Chevalley : il existe un sous-groupe
caract\'eristique $G\lin \subset G$, qui est lin\'eaire connexe, et
tel que le quotient $G\abvar := G / G\lin$ soit une $k$-vari\'et\'e
ab\'elienne. 
%On a donc une suite exacte 
$$1 \to G\lin \to G \xto{p} G\abvar \to 1 \, .$$
On a alors le fait suivant (voir \cite{Ros}, corollaire 1 du th\'eor\`eme 13 et corollaire 5 du th\'eor\`eme 16) :
\begin{lem}
Avec les notations pr\'ec\'edentes,
\begin{enumerate}
  \item $G = G\lin . D$.
    \item Le sous-groupe $D$ est central, i.e. $D \subset Z(G)$.
\end{enumerate}
\end{lem}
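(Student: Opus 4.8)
Le plan est de d\'emontrer d'abord l'\'egalit\'e $G = G\lin \cdot D$ par un simple argument de quotient, puis d'en d\'eduire la centralit\'e de $D$ en exploitant le fait que $D$ est anti-affine (tout morphisme de $D$ vers une vari\'et\'e affine est constant) ; ce dernier point est l'unique ingr\'edient non formel, fourni par la th\'eorie de Rosenlicht.

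Pour (1), on remarque que $D = \ker(G \to G\li)$ est distingu\'e et que $G\lin$ est caract\'eristique, de sorte que $G\lin \cdot D$ est un sous-groupe distingu\'e de $G$. Le quotient $G/(G\lin \cdot D)$ est alors simultan\'ement un quotient de $G\li = G/D$, donc affine, et un quotient de la vari\'et\'e ab\'elienne $G\abvar = G/G\lin$, donc propre. Or un groupe alg\'ebrique connexe \`a la fois affine et propre est trivial, d'o\`u $G = G\lin \cdot D$. En posant $L := D \cap G\lin$, le deuxi\`eme th\'eor\`eme d'isomorphisme donne $D/L \cong D \cdot G\lin / G\lin = G/G\lin = G\abvar =: A$, donc $D$ est extension de la vari\'et\'e ab\'elienne $A$ par le sous-groupe ferm\'e $L$ du groupe affine $G\lin$ ; en particulier $L$ est affine.

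Pour (2), on \'etablit successivement $[D, G\lin] = 1$ puis $[D, D] = 1$. Fixant $h \in G\lin$, la normalit\'e de $D$ et de $G\lin$ montre que le morphisme $\mu_h : D \to G$, $d \mapsto d h d^{-1} h^{-1}$, est \`a valeurs dans $L = D \cap G\lin$ ; comme $D$ est anti-affine, que $L$ est affine et que $\mu_h(e) = e$, le morphisme $\mu_h$ est constant \'egal \`a $e$, d'o\`u $[D, G\lin] = 1$. Ensuite, la projection $q : D \to D/L = A$ \'etant un homomorphisme vers une vari\'et\'e ab\'elienne (commutative), on a $q([x,d]) = [q(x), q(d)] = e$ pour tous $x, d \in D$, c'est-\`a-dire $[x,d] \in L$ ; le morphisme $\nu_d : D \to L$, $x \mapsto x d x^{-1} d^{-1}$, est donc \`a valeurs dans l'affine $L$ et envoie $e$ sur $e$, donc \`a nouveau constant \'egal \`a $e$, et $D$ est commutatif. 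Comme $G = G\lin \cdot D$ par (1) et que $D$ centralise \`a la fois $G\lin$ et $D$, on conclut que $D \subset Z(G)$.

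Le seul point v\'eritablement d\'elicat est l'anti-affinit\'e de $D$, i.e. $\sO(D) = \ov{k}$ : c'est elle qui force les deux commutateurs ci-dessus, \`a valeurs dans un groupe affine, \`a \^etre triviaux, et c'est pr\'ecis\'ement le contenu des corollaires de Rosenlicht invoqu\'es. Tout le reste est formel.
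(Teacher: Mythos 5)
Your argument is correct, but it takes a genuinely different route from the paper: the paper gives no proof at all here, stating the lemma as a fact quoted from Rosenlicht (\cite{Ros}, corollaire 1 du th\'eor\`eme 13 et corollaire 5 du th\'eor\`eme 16). You instead deduce both assertions by formal arguments from a single non-trivial input, the anti-affinity of $D$: part (1) from the observation that $G/(G\lin \cdot D)$ is a quotient both of the affine group $G\li$ and of the abelian variety $G\abvar$, hence trivial, and part (2) from the fact that the two commutator morphisms out of $D$ land in affine groups and must therefore be constant. This is exactly the modern structure-theoretic proof (as in Brion's work on anti-affine groups, or Milne's book), and its virtue is to isolate the one place where real geometry enters. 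The weak point is your attribution of that input: the anti-affinity of $D$ is \emph{not} ``le contenu des corollaires invoqu\'es'' --- the paper cites those corollaires as the source of the two statements of the lemma themselves --- nor does it follow formally from the universal property by which the paper defines $D$. It is a separate theorem from Rosenlicht's theory, namely that $D$ coincides with the kernel of the affinization morphism $G \to \Spec \sO(G)$ and that its only global functions are the constants; you should cite it as such (Rosenlicht, or a modern reference such as Demazure--Gabriel III.3.8, or Brion/Milne), so that your proof does not silently rest on a statement of comparable depth to the one being proved. Two cosmetic points: anti-affinity should read $\sO(D) = k$ (equivalently $\sO(\ov{D}) = \ov{k}$ after base change), not $\sO(D) = \ov{k}$; and in the step $[D, G\lin] = 1$, normality of $G\lin$ alone already places $\mu_h$ in the affine group $G\lin$, so the intersection $L = D \cap G\lin$ is really only needed for the second commutator.
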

\label{lem Rosenlicht}
On d\'eduit de ce lemme le diagramme commutatif
exact de $k$-groupes suivant :
\begin{equation}
\label{diagramme groupes}
\xymatrix{
& 1 \ar[d] & 1 \ar[d] & & \\
1 \ar[r] & G\lin \cap D \ar[r] \ar[d] & D \ar[r]^{p'} \ar[d] & G\abvar \ar[r]
\ar[d]^{=} & 1 \\
1 \ar[r] & G\lin \ar[r] \ar[d]^{\pi'} & G \ar[d]^{\pi} \ar[r]^{p} &
G\abvar \ar[r] & 1 \\
& G\li \ar[r]^{=} \ar[d] & G\li \ar[d] & & \\
& 1 & 1 & & \, .
}
\end{equation}
Notons que le groupe $G\lin \cap D$ est un sous-groupe central de
$G\lin$, donc c'est un groupe lin\'eaire commutatif.

On fait d'abord l'hypoth\`ese suivante : \emph{le groupe $G\lin$ est
  r\'eductif}.
Alors $G\li$ est r\'eductif. Fixons $T_{G\li} \subset G\li$ un $k$-tore maximal. La suite exacte 
$$1 \to G\lin \cap D \to G\lin \xto{\pi'} G\li \to 1$$
assure que $T_G = T_{G\lin} := {\pi'}^{-1}(T_{G\li}) \subset G\lin$ est un tore maximal de $G\lin$.
D\'efinissons alors $\SA_G := \pi^{-1}(T_{G\li}) \subset G$. Le diagramme
(\ref{diagramme groupes}) induit un diagramme commutatif de groupes alg\'ebriques
commutatifs :
\begin{displaymath}
\xymatrix{
& 1 \ar[d] & 1 \ar[d] & & \\
1 \ar[r] & G\lin \cap D \ar[r] \ar[d] & D \ar[r]^{p'} \ar[d] & G\abvar \ar[r]
\ar[d]^{=} & 1 \\
1 \ar[r] & T_{G} \ar[r] \ar[d]^{\pi'} & \SA_G \ar[d]^{\pi} \ar[r]^{p} &
G\abvar \ar[r] & 1 \\
& T_{G\li} \ar[r]^{=} \ar[d] & T_{G\li} \ar[d] & & \\
& 1 & 1 & & \, .
}
\end{displaymath}

Le groupe $\SA_G$ est donc une extension de la vari\'et\'e ab\'elienne $G\abvar$ par le $k$-tore $T_{G}$. Donc $\SA_G$ est une vari\'et\'e semi-ab\'elienne sur $k$. C'est cette vari\'et\'e semi-ab\'elienne qui va jouer le r\^ole que jouait le tore maximal dans les groupes r\'eductifs.

\begin{lem}
Le groupe $\SA_G$ est une sous-vari\'et\'e semi-ab\'elienne maximale de $G$, i.e. toute vari\'et\'e semi-ab\'elienne contenue dans $G$ et contenant $\SA_G$ est \'egale \`a $\SA_G$.
\end{lem}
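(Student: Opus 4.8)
\emph{Plan de preuve proposé.}

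Le plan est d'utiliser la projection $\pi : G \to G\li$ sur le quotient lin\'eaire maximal de Rosenlicht afin de ramener la maximalit\'e de $\SA_G$ \`a la maximalit\'e du tore $T_{G\li}$ dans le groupe r\'eductif $G\li$. Soit donc $S$ une vari\'et\'e semi-ab\'elienne avec $\SA_G \subset S \subset G$ ; je veux montrer que $S = \SA_G$. Comme $\SA_G = \pi^{-1}(T_{G\li})$ et que $\pi$ est surjective (c'est le morphisme de quotient $G \to G/D$), on a $\pi(\SA_G) = T_{G\li}$, et il suffit d'\'etablir l'inclusion $S \subset \pi^{-1}(T_{G\li})$, c'est-\`a-dire $\pi(S) \subset T_{G\li}$.

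Je commencerais par montrer que $\pi(S)$ est un tore. Puisque $S$ est semi-ab\'elienne, elle est connexe, donc son image $\pi(S)$ est un sous-groupe ferm\'e connexe de $G\li$. C'est d'une part un quotient de la vari\'et\'e semi-ab\'elienne $S$, donc une vari\'et\'e semi-ab\'elienne ; c'est d'autre part un sous-groupe du groupe lin\'eaire $G\li$, donc un groupe affine. Sa partie ab\'elienne, qui est \`a la fois une vari\'et\'e ab\'elienne (comme quotient de celle de $S$) et un groupe affine (comme quotient d'un groupe affine, la caract\'eristique \'etant nulle), est donc triviale ; autrement dit $\pi(S)$ est un tore de $G\li$. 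Ensuite, l'inclusion $\SA_G \subset S$ donne $T_{G\li} = \pi(\SA_G) \subset \pi(S)$, de sorte que le tore $\pi(S)$ contient le tore maximal $T_{G\li}$ ; par maximalit\'e, $\pi(S) = T_{G\li}$. On en d\'eduit $S \subset \pi^{-1}(T_{G\li}) = \SA_G$, d'o\`u l'\'egalit\'e cherch\'ee.

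Le seul point v\'eritablement substantiel est l'assertion que l'image d'une vari\'et\'e semi-ab\'elienne dans un groupe lin\'eaire est un tore : elle repose sur le fait qu'une vari\'et\'e ab\'elienne, \'etant compl\`ete, n'admet pas de quotient affine non trivial (et, de fa\c con \'equivalente, qu'une vari\'et\'e semi-ab\'elienne n'admet aucun morphisme non nul vers un groupe unipotent, un tel morphisme \'etant trivial sur la partie torique puis se factorisant par la partie ab\'elienne, donc constant). Tout le reste n'est que formalisme sur les quotients et l'image directe d'un morphisme de groupes alg\'ebriques, et la conclusion est alors imm\'ediate.
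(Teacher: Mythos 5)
Votre preuve est correcte et correspond exactement \`a l'argument que le papier a en t\^ete lorsqu'il d\'eclare la preuve \og \'evidente \fg{} : puisque $\SA_G = \pi^{-1}(T_{G\li})$ par d\'efinition, on ram\`ene tout \`a la maximalit\'e du tore $T_{G\li}$ dans $G\li$, en observant que l'image d'une vari\'et\'e semi-ab\'elienne dans un groupe lin\'eaire est un tore (partie ab\'elienne \`a la fois affine et compl\`ete, donc triviale). Tous les points interm\'ediaires (fermeture de l'image, surjectivit\'e de $\pi$, $S \subset \pi^{-1}(\pi(S))$) sont correctement justifi\'es ; il n'y a rien \`a redire.
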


\begin{proof}
La preuve est \'evidente.
\end{proof}

Si $G\sc$ d\'esigne le rev\^etement simplement connexe du groupe d\'eriv\'e de $G\lin$, on a bien des morphismes naturels
$\rho_G : G\sc \to G$ et $\rho_G : T_{G\sc} \to \SA_G$
de sorte que le diagramme suivant soit commutatif :
\begin{equation}
\label{diag mod crois}
\xymatrix{
Z_{G\sc} \ar[r] \ar[d] & T_{G\sc} \ar[r] \ar[d] & G\sc \ar[d] \\
Z_G \ar[r] & \SA_G \ar[r] & G \, .
}
\end{equation}

\begin{lem}
\label{lem qis}
Le diagramme (\ref{diag mod crois}) d\'efinit des quasi-isomorphismes de modules crois\'es
$$[Z_{G\sc} \to Z_G] \to [T_{G\sc} \to \SA_G] \to [G\sc \to G] \, .$$
\end{lem}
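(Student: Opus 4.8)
The plan is to use the elementary criterion that a morphism of crossed modules $[A_1 \to B_1] \to [A_2 \to B_2]$ (two-term complexes of $k$-groups, in degrees $-1$ and $0$) is a quasi-isomorphism exactly when it induces isomorphisms on the kernels ($\pi_1 = H^{-1}$) and on the cokernels ($\pi_0 = H^0$) of the two horizontal maps. Thus I would reduce the lemma to three kernel computations and three cokernel computations, for the arrows $Z_{G\sc} \to Z_G$, $T_{G\sc} \to \SA_G$ and $G\sc \to G$, and check that the vertical maps of diagram~(\ref{diag mod crois}) identify them.

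For the kernels, I would set $\mu := \ker(\rho_G)$. Since $\rho_G$ factors as the central isogeny $\rho : G\sc \to G\ss$ followed by the closed immersion $G\ss \hookrightarrow G$, one has $\mu = \ker\rho$, a finite central subgroup, so that $\mu \subseteq Z_{G\sc} \subseteq T_{G\sc}$. The restricted arrows $Z_{G\sc} \to Z_G$ and $T_{G\sc} \to \SA_G$ likewise factor through the injections $Z_{G\ss} \hookrightarrow Z_G$ and $T_{G\ss} \hookrightarrow \SA_G$ (with $T_{G\ss} := T_G \cap G\ss$), so all three kernels equal $\mu$ and the two horizontal maps restrict to the identity on $\mu$. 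This disposes of the $\pi_1$ part.

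The cokernel computation is the core of the argument and the only place where the structure of $G$ enters; I claim all three cokernels are canonically $G\sab := G/G\ss$, compatibly via the inclusions $Z_G \hookrightarrow \SA_G \hookrightarrow G$ and the quotient $G \to G/G\ss$. First, the Rosenlicht decomposition $G = G\lin . D$ of Lemme~\ref{lem Rosenlicht}, combined with $G\lin = Z_{G\lin} . G\ss$ and the centrality of $D$, gives $G = Z_G . G\ss$; hence $Z_G \to G\sab$ is surjective with kernel $Z_G \cap G\ss = Z_{G\ss}$, and since $\rho(Z_{G\sc}) = Z_{G\ss}$ for the central isogeny $\rho$, this yields $\coker(Z_{G\sc} \to Z_G) \cong G\sab$. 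Next, viewing $\SA_G$ as an extension of $G\abvar$ by $T_G$ and $G\sab$ as an extension of $G\abvar$ by $G\tor := G\lin/G\ss$, the inclusion $\SA_G \hookrightarrow G$ induces a morphism of these two extensions which is the identity on $G\abvar$ and is the surjection $T_G \to G\tor$ (with kernel $T_{G\ss}$) on the toric parts; the five lemma then shows $\SA_G \to G\sab$ is surjective with kernel $T_{G\ss}$, whence $\coker(T_{G\sc} \to \SA_G) \cong G\sab$ because $\rho(T_{G\sc}) = T_{G\ss}$. Finally $\coker(G\sc \to G) = G\sab$ tautologically, and the three identifications agree since they are all induced by $G \to G/G\ss$.

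The hard part will be the cokernel step, and within it the two subgroup identities $Z_G \cap G\ss = Z_{G\ss}$ and $\SA_G \cap G\ss = T_{G\ss}$ together with the surjectivity $G = Z_G . G\ss$: these are precisely where one must invoke the centrality of $D$ from Lemme~\ref{lem Rosenlicht} and the fact that a maximal torus of a reductive group surjects onto its toric quotient. Everything else is formal diagram chasing once the kernels and cokernels are in hand.
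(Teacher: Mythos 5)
Your kernel/cokernel computation is correct, and it is in substance the same argument the paper uses: the paper's proof reduces the quasi-isomorphism assertion to Borovoi's Lemma~2.4.1 of \cite{BorAMS}, made applicable by the identities $G = Z_G\cdot G\ss = \SA_G\cdot G\ss$, $\rho_G(T_{G\sc}) = \SA_G\cap G\ss$ and $\rho_G(Z_{G\sc}) = Z_G\cap G\ss$, all coming from Lemme~\ref{lem Rosenlicht}. These are exactly the facts you establish (your $T_{G\ss} := T_G\cap G\ss$ coincides with $\SA_G\cap G\ss$ because $\SA_G\cap G\lin = T_G$, and your $Z_{G\ss} = Z_G\cap G\ss$), so the identification of all three kernels with $\ker(G\sc\to G\ss)$ and of all three cokernels with $G\sab = G/G\ss$ is sound and faithfully spells out what the paper outsources to the reference.

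There is, however, a genuine gap: you never construct the crossed-module structure on $[G\sc \to G]$, nor verify that the two arrows of diagram~(\ref{diag mod crois}) are morphisms of \emph{modules crois\'es} — and this is precisely the part of the statement that cannot be quoted from the reductive case. Your criterion (isomorphisms on $\pi_0$ and $\pi_1$) presupposes that all three objects are crossed modules and that the maps respect the actions; for the two left-hand complexes this is trivial (commutative groups, trivial actions), but for $[G\sc \to G]$ it is not: $\rho_G$ is not injective, so $G\sc$ is not a subgroup of $G$ and naive conjugation makes no sense, and since $G$ is not linear one cannot simply cite the standard structure for reductive groups. The paper devotes the first half of its proof to exactly this point: using Lemme~\ref{lem Rosenlicht} (namely $G = G\lin\cdot Z_G$ and $Z_G\cap G\lin = Z_{G\lin}$), it defines the action of $G$ on $G\sc$ as the composite $G \to G/Z_G \cong G\lin/Z_{G\lin} \cong G\sc/Z_{G\sc} = \textup{Int}(G\sc)$, then checks the crossed-module axioms and the equivariance of the arrows (for instance, $\SA_G = T_G\cdot D$ acts trivially on $T_{G\sc}$ because its image in $G\sc/Z_{G\sc}$ lies in the commutative subgroup $T_{G\sc}/Z_{G\sc}$, so the middle complex with its trivial action does map to $[G\sc\to G]$ as a crossed module). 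Without this construction the statement you are proving is not even well-posed, so your proposal needs this additional step before the $\pi_0$/$\pi_1$ computation can be run.
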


\begin{proof}
On commence par v\'erifier que le morphisme $G\sc \to G$ d\'efinit un module crois\'e. Par le lemme \ref{lem Rosenlicht}, on sait que $Z_{G\lin} = Z_G \cap G\lin$ et que l'on a une suite exacte
$$0 \to Z_{G\lin} \to Z_G \to G\abvar \to 0 \, .$$
On d\'efinit alors un morphisme $G \to \textup{Aut}(G\sc)$ comme le compos\'e des morphismes suivants :
$$G \to G / Z_G \cong G\lin / Z_{G\lin} \cong G\sc / Z_{G\sc} = \textup{Int}(G\sc) \subset \textup{Aut}(G\sc) \, .$$
On v\'erifie alors imm\'ediatemment que cette action de $G$ sur $G\sc$ d\'efinit une structure de module crois\'e sur $G\sc \to G$. Avec cette d\'efinition, le diagramme (\ref{diag mod crois}) est bien form\'e de morphismes de modules crois\'es.

Pour montrer que les morphismes du lemme sont des quasi-isomorphismes, la preuve est identique \`a celle du lemme 2.4.1 de \cite{BorAMS}, en remarquant que $G = G\ss.Z_G = G\ss.\SA_G$ (voir le lemme \ref{lem Rosenlicht}) et que $\rho_G(T_{G\sc}) = \SA_G \cap G\ss$ et $\rho_G(Z_{G\sc}) = Z_G \cap G\ss$.
\end{proof}

Plus g\'en\'eralement, si $G\lin$ n'est pas suppos\'e r\'eductif, on note $G\uu$ le radical unipotent de $G\lin$, $\SA_G := \SA_{G/G\uu}$ et $C_G := [T_{G\sc} \to \SA_G]$.
%
%Ainsi dans la suite consid\'erera-t-on le complexe de vari\'et\'es semi-ab\'eliennes 
%$$C_G := [T_G\sc \to \SA_G] \, .$$

\begin{exs}
\begin{itemize}
  \item Si $G$ est r\'eductif, on retrouve le complexe de tores $C_G = [T_{G\sc} \to T_G]$ utilis\'e par Borovoi (voir \cite{BorAMS}).
    \item Si $G$ est une vari\'et\'e semi-ab\'elienne, alors $C_G = [0 \to G]$ peut \^etre consid\'er\'e comme le $1$-motif associ\'e \`a $G$.
\end{itemize}
\end{exs}

Pour tout groupe connexe $G$, on a donc construit le complexe $C_G := [T_{G\sc} \xrightarrow{\rho_G} \SA_G]$.
On voit ce complexe comme un objet de la cat\'egorie des complexes de
groupes alg\'ebriques commutatifs sur $k$. L'objet qui nous int\'eresse est le complexe de modules galoisiens ``dual'' du complexe $C_G$, que l'on note $\widehat{C}_G$ :
$$\widehat{C}_G := [\widehat{T_G} \to (G\abvar)^*(\ov{k}) \oplus \widehat{T_{G\sc}}] \, ,$$
avec $\widehat{T_G}$ en degr\'e $0$. D\'efinissons aussi une variante de ce complexe :
$$\widehat{C}'_G := [\widehat{T_G} \to \Pic(\ov{G\abvar}) \oplus \widehat{T_{G\sc}}] \, .$$

Le complexe $\widehat{C}_G$ est li\'e au groupe $\Br(G)$. Le point
crucial est le th\'eor\`eme suivant, qui g\'en\'eralise le th\'eor\`eme 4.8 de
\cite{BvH} au cas des groupes non-lin\'eaires :
\begin{theo}
\label{theo qis}
Soit $G$ un groupe alg\'ebrique connexe. 
Il existe dans la cat\'egorie d\'eriv\'ee un isomorphisme canonique 
$$\widehat{C}'_G \cong \UPic(\ov{G})$$
et un triangle exact canonique
$$\widehat{C}_G \to \UPic(\ov{G}) \to \textup{NS}(\ov{G}\abvar)[-1] \to \widehat{C}_G[1] \, .$$
\end{theo}

%\begin{rem}
%L'hypoth\`ese $G\ss$ simplement connexe et $H$ lin\'eaire n'est pas restrictive : en
%effet, suivant \cite{BCTS}, proposition 3.1, \'etant donn\'e un espace
%homog\`ene $X$ d'un groupe connexe, \`a stabilisateurs connexes, on
%peut toujours le munir d'une structure d'espace homog\`ene sous un groupe
%$G'$ tel que ${G'}\ss$ soit simplement connexe et les stabilisateurs dans $G'$ soient lin\'eaires connexes.
%\end{rem}

\begin{proof}
L'isomorphisme $(\ov{G}\abvar)^*(\ov{k}) \cong \Pic^0(\ov{G}\abvar)$ induit une suite exacte
$$0 \to (\ov{G}\abvar)^*(\ov{k}) \to \Pic(\ov{G}\abvar) \to \textup{NS}(\ov{G}\abvar) \to 0 \, .$$
Par cons\'equent, il suffit de montrer que le complexe $[\widehat{T_G} \to \widehat{T_{G\sc}} \oplus \Pic(\ov{G}\abvar)]$ est quasi-isomorphe \`a $\UPic(\ov{G})$. Enfin, puisque le morphisme naturel $\UPic(\ov{G}/\ov{G\uu}) \to \UPic(\ov{G})$ est un isomorphisme, on peut supposer que $G\lin$ est r\'eductif. 

%Montrons donc le lemme suivant :
\begin{lem}
\label{lem qis toit}
Le diagramme naturel suivant
\begin{equation}
\label{gd diag UPic}
\xymatrix{
\widehat{T_G} \ar[r]^(.4){\widehat{\rho} \oplus \Delta'_{\textup{SA}_G / T_G}} & \widehat{T_{G\sc}} \oplus \Pic(\ov{G}\abvar) \\
\widehat{T_G} \oplus \ov{k}(G)^* / \ov{k}^* \ar[r]^{\lambda} \ar[u]^{\textup{pr}_1} \ar[d]^{\textup{pr}_2} & \UPic_{\ov{T}_G}(\ov{G})^1 \ar[u]^{f_1 \oplus f_2} \ar[d]^{\textup{pr}_{\textup{Div}}} \\
\ov{k}(G)^* / \ov{k}^* \ar[r]^{\textup{div}} & \textup{Div}(\ov{G})
}
\end{equation}
est commutatif, et il induit des quasi-isomorphismes entre les trois complexes horizontaux.
\end{lem}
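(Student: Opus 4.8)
The strategy is to use the central complex $[\widehat{T_G} \oplus \ov{k}(G)^*/\ov{k}^* \xrightarrow{\lambda} \UPic_{\ov{T}_G}(\ov{G})^1]$ as a roof dominating both outer complexes of (\ref{gd diag UPic}), and to prove separately that the upper pair of vertical maps $(\textup{pr}_1, f_1 \oplus f_2)$ and the lower pair $(\textup{pr}_2, \textup{pr}_{\textup{Div}})$ are each quasi-isomorphisms; recall that at this point in the proof of Theorem \ref{theo qis} we have already reduced to the case where $G\lin$ is reductive, so that $\SA_G$ is the semi-abelian extension of $G\abvar$ by the maximal torus $T_G$ occurring in (\ref{diagramme groupes}).

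First I would check commutativity of the two squares by unwinding the definitions of the structure maps. For the lower square, the divisor component of $\lambda$ is by construction the map $f \mapsto \textup{div}(f)$ on the factor $\ov{k}(G)^*/\ov{k}^*$, and $\textup{pr}_{\textup{Div}}$ merely extracts this component, so $\textup{pr}_{\textup{Div}} \circ \lambda = \textup{div} \circ \textup{pr}_2$ is immediate. For the upper square one must see that $(f_1 \oplus f_2) \circ \lambda = (\widehat{\rho} \oplus \Delta'_{\SA_G / T_G}) \circ \textup{pr}_1$, which amounts to two compatibilities: that $f_1$ recovers on a character $\chi \in \widehat{T_G}$ its restriction $\widehat{\rho}(\chi) = \chi \circ \rho_G \in \widehat{T_{G\sc}}$, and that $f_2$ recovers the class $\Delta'_{\SA_G / T_G}(\chi) \in \Pic(\ov{G}\abvar)$ obtained by pushing out the $T_G$-torsor $\SA_G \to G\abvar$ along $\chi$ (the connecting map of the lower extension in (\ref{diagramme groupes})). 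Both are purely formal once the definitions of $\lambda$ and of $f_1, f_2$ are written out.

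It then remains to establish the two quasi-isomorphisms, which I would do via the acyclicity of the relevant kernel complexes (well defined precisely because the squares commute). For the lower pair, $\textup{pr}_2$ is surjective with kernel $\widehat{T_G}$, so $(\textup{pr}_2, \textup{pr}_{\textup{Div}})$ is a quasi-isomorphism as soon as $\textup{pr}_{\textup{Div}}$ is surjective and $\lambda$ induces an isomorphism $\widehat{T_G} \xrightarrow{\sim} \ker(\textup{pr}_{\textup{Div}})$, for then the kernel complex $[\widehat{T_G} \xrightarrow{\sim} \ker(\textup{pr}_{\textup{Div}})]$ is acyclic. For the upper pair one argues symmetrically: $\textup{pr}_1$ is surjective with kernel $\ov{k}(G)^*/\ov{k}^*$, and it suffices that $f_1 \oplus f_2$ be surjective with $\lambda$ inducing an isomorphism $\ov{k}(G)^*/\ov{k}^* \xrightarrow{\sim} \ker(f_1 \oplus f_2)$. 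Equivalently, one verifies directly that the induced maps on $\mathcal{H}^0$ and $\mathcal{H}^1$ are isomorphisms, using Rosenlicht's lemma to identify $\mathcal{H}^0(\UPic(\ov{G})) = \widehat{G}$.

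I expect the surjectivity of $f_1 \oplus f_2$ and the computation of its kernel to be the main obstacle, since this is the only point where the geometry of $\ov{G}$ genuinely enters. After the reduction to $G\lin$ reductive, the component of the divisor data landing in $\widehat{T_{G\sc}}$ is controlled by the codimension-one Bruhat cells, i.e.\ by the simple roots, exactly as in the reductive case treated by Borovoi and van Hamel (Theorem 4.8 of \cite{BvH}); the component landing in $\Pic(\ov{G}\abvar)$ is controlled by the pullback of divisors along the abelian quotient $G \to G\abvar$ through the semi-abelian structure of $\SA_G$. Assembling these two contributions by means of the commutative diagram (\ref{diagramme groupes}) yields both the surjectivity and the identification of the kernel with $\ov{k}(G)^*/\ov{k}^*$, and hence the asserted quasi-isomorphisms between the three horizontal complexes.
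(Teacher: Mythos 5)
Your overall architecture coincides with the paper's: the middle complex is used as a roof, commutativity of both squares is a formal unwinding of the definitions of $\lambda$, $f_1$ and $f_2$, and the two vertical pairs are handled separately. Your treatment of the lower square is essentially the paper's argument transposed from horizontal cohomology to the (equivalent) vertical-kernel-complex criterion: both kernels are $U(\ov{G})$ by Rosenlicht, and the statement on cokernels (respectively, the surjectivity of $\textup{pr}_{\textup{Div}}$) is exactly the fact that every class of $\Pic(\ov{G})$ admits a $\ov{T}_G$-linearisation, i.e. \cite{KKV}, lemme 2.2, valid because $\Pic(\ov{T}_G)=0$.

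The gap is at the point you yourself flag as the main obstacle, and your proposed resolution does not close it. What must be proved is that $f_1 \oplus f_2$ induces an isomorphism $\textup{Coker}(\lambda) \to \widehat{T_{G\sc}} \oplus \Pic(\ov{G}\abvar)$. Since $f_1$ and $f_2$ both factor through $\nu : \UPic_{\ov{T}_G}(\ov{G})^1 \onto \Pic_{\ov{T}_G}(\ov{G})$ (and since, by \cite{BvHprep}, th\'eor\`eme 3.12, the middle complex may be replaced by $[\widehat{T_G} \to \Pic_{\ov{T}_G}(\ov{G})]$, using $U_{\ov{T}_G}(\ov{G})=0$), this is precisely the assertion that the natural map
$$\Pic_{\ov{T}_G}(\ov{G}) \longrightarrow \Pic_{\ov{T}_G}(\ov{G}\lin) \oplus \Pic(\ov{G}\abvar)$$
is an isomorphism — the paper's Lemme \ref{lem Pic T G} — combined with the identifications $\Pic_{\ov{T}_G}(\ov{G}\lin) \cong \widehat{T_{G\sc}}$ and $\Pic_{\ov{T}_G}(\ov{\SA}_G) \cong \Pic(\ov{G}\abvar)$ from \cite{KKV}. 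Your sketch ("Bruhat cells give the $\widehat{T_{G\sc}}$ component, pullback along $G \to G\abvar$ gives the $\Pic(\ov{G}\abvar)$ component, assemble via (\ref{diagramme groupes})") at best yields surjectivity onto each factor separately; it gives neither surjectivity onto the direct sum nor, more seriously, the kernel statement, namely that a linearised class dying in \emph{both} factors lies in the image of $\lambda$. The paper's proof of this is genuinely geometric: one passes to $W := G/T_G$ and $W\lin := G\lin/T_G$ (so that $\Pic(\ov{W}) \cong \Pic_{\ov{T}_G}(\ov{G})$ by \cite{KKV}, proposition 5.1), uses the section $s : G\abvar \to W$ induced by $T_G \subset \SA_G$ and the retraction $q : W \to W\lin$ obtained by identifying $W\lin \cong G/\SA_G$ — this is exactly where the semi-abelian subgroup $\SA_G$ intervenes — and then proves exactness of $0 \to \Pic(\ov{G}\abvar) \to \Pic(\ov{W}) \to \Pic(\ov{W}\lin) \to 0$ \emph{in the middle term} by a diagram chase (two applications of the snake lemma plus the five lemma) on the character/Picard diagram attached to (\ref{diag W}). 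The sections give exactness at the two ends and the splitting, but middle exactness is the nontrivial content, and nothing in your proposal substitutes for it. Note also that the Bruhat-cell picture is not how the linear factor enters: the identification $\Pic_{\ov{T}_{G\sc}}(\ov{G}\sc) \cong \widehat{T_{G\sc}}$ is imported wholesale from \cite{KKV}; the actual difficulty of this lemma is the interaction between the linear and the abelian parts, which your argument never confronts.
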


\begin{proof}
Le groupe $\UPic_{\ov{T}_G}(\ov{G})^1$ est d\'efini dans \cite{BvHprep}, section 2, de la fa\c con suivante : si $H$ est un $k$-groupe lin\'eaire connexe agissant (\`a gauche) sur une $k$-vari\'et\'e g\'eom\'etriquement int\`egre $Y$, on note 
%\begin{changemargin}{-2cm}{-2cm}
$$\UPic_{\ov{H}}(\ov{Y})^1 := \left\{ (D,z) \in \textup{Div}(\ov{Y})
 \times \ov{k}(H \times Y)^* : \left\{ \begin{array}{l} z_{h_1 h_2} (y) = z_{h_1}(y) . z_{h_2
 }(h_1^{-1}.y) \\
 \textup{div}(z) = m^* D - \textup{pr}_Y^* D
 \end{array} \right. \right\} \, ,$$
%\end{changemargin}
o\`u $m : H \times Y \to Y$ d\'esigne l'action de $H$ sur $Y$ et $\textup{pr}_Y : H \times Y \to Y$ d\'esigne la projection sur le second facteur. On dispose d'un complexe canonique (voir \cite{BvHprep}, d\'efinition 2.3)
$$\UPic_{\ov{H}}(\ov{Y}) := \left[\ov{k}(Y)^* / \ov{k}^* \xrightarrow{d} \UPic_{\ov{H}}(\ov{Y})^1\right]$$ 
d\'efini par $d(f) := \left( \textup{div}(f), \frac{m^* f}{\textup{pr}_Y^* f} \right)$, de sorte que $\textup{Ker}(d)$ soit canoniquement isomorphe \`a $U_{\ov{H}}(\ov{Y})$ et $\textup{Coker}(d)$ \`a $\Pic_{\ov{H}}(\ov{Y})$ (voir \cite{BvHprep}, section 2 et th\'eor\`eme 3.12).

D\'efinissons chacun des morphismes apparaissant dans le diagramme (\ref{gd diag UPic}).
Le morphisme $\lambda : \widehat{T_G} \oplus \ov{k}(G)^* / \ov{k}^* \to \UPic_{\ov{T}_G}(\ov{G})^1$ est d\'efini par la formule suivante : $\lambda(\chi,f) := (\chi.d^0(f) , \textup{div}(f))$,
o\`u $d^0(f) : (t,g) \mapsto f(t.g)/f(g)$. 
Le morphisme $f_1$ est la compos\'ee
$$f_1 : \UPic_{\ov{T}_G}(\ov{G})^1 \xrightarrow{\nu} \Pic_{\ov{T}_G}(\ov{G}) \to \Pic_{\ov{T}_{G\sc}}(\ov{G}\sc) \xleftarrow{\cong} \widehat{T_{G\sc}} \, ,$$
o\`u $\nu$ est le morphisme naturel (voir \cite{BvHprep}, th\'eor\`eme 3.12) et le dernier morphisme provient de \cite{KKV}, lemme 2.2 et proposition 2.3. De m\^eme, $f_2$ est la compos\'ee 
$$f_2 : \UPic_{\ov{T}_G}(\ov{G})^1 \xrightarrow{\nu} \Pic_{\ov{T}_G}(\ov{G}) \to \Pic_{\ov{T}_G}(\ov{\SA}_G) \xleftarrow{\cong} \Pic(\ov{G}\abvar) \, ,$$
o\`u le dernier morphisme provient de la proposition 5.1 de \cite{KKV}.
Les autres morphismes du diagramme (\ref{gd diag UPic}) sont les morphismes naturels.

Le diagramme (\ref{gd diag UPic}) est commutatif : pour le carr\'e inf\'erieur, c'est \'evident. Pour le carr\'e sup\'erieur, cela r\'esulte d'un calcul explicitant les diff\'erents morphismes.

Montrons que le diagramme (\ref{gd diag UPic}) d\'efinit des quasi-isomorphismes entre les complexes horizontaux. Consid\'erons d'abord le carr\'e inf\'erieur. Les noyaux de $\lambda$ et $\textup{div}$ sont clairement isomorphes via $\textup{pr}_2$ (ils sont canoniquement isomorphes \`a $U(\ov{G})$). Le morphisme $\textup{Coker}(\lambda) \to \textup{Coker}(\textup{div})$ s'identifie au morphisme naturel $\Pic_{\ov{T}_G}(\ov{G}) / H^1_{\textup{alg}}(\ov{T}_G,\mathcal{O}(\ov{G})^*) \to \Pic(\ov{G})$, et celui-ci est un isomorphisme par \cite{KKV}, lemme 2.2 (en utilisant $\Pic(\ov{T}_G) = 0$). Donc le carr\'e inf\'erieur d\'efinit un quasi-isomorphisme entre les complexes horizontaux.

Consid\'erons pour finir le carr\'e sup\'erieur : \`a nouveau, il est clair que le morphisme $\textup{pr}_1$ induit un isomorphisme $\textup{Ker}(\lambda) \cong \textup{Ker}(\widehat{\rho} \oplus \Delta'_{\textup{SA}_G / T_G})$. Montrons que $f_1 \oplus f_2 : \textup{Coker}(\lambda) \to \textup{Coker}(\widehat{\rho} \oplus \Delta'_{\textup{SA}_G / T_G})$ est un isomorphisme.
%
%Pour cela, on montre :
\begin{lem}
\label{lem Pic T G}
Le morphisme canonique $\Pic_{\ov{T}_G}(\ov{G}) \to \Pic_{\ov{T}_G}(\ov{G}\lin) \oplus \Pic(\ov{G}\abvar)$ est un isomorphisme.
\end{lem}

\begin{proof}
D\'efinissons les espaces homog\`enes $W := G / T_G$ et $W\lin := G\lin / T_G$. Consid\'erons le diagramme commutatif ``exact'' de $k$-vari\'et\'es suivant :
\begin{equation}
\label{diag W}
\xymatrix{
& 1 \ar[d] & 1 \ar[d] &   & \\
& T_G \ar[r]^= \ar[d] & T_G \ar[d] & & \\
1 \ar[r] & G\lin \ar[r] \ar[d] & G \ar[r] \ar[d] & G\abvar \ar[r] \ar[d]^= & 1 \\
1 \ar[r] & W\lin \ar[r]^{i} \ar[d] & W \ar[r]^{p} \ar[d] & G\abvar \ar[r] & 1 \\
& 1 & 1 & & \, .
}
\end{equation}
Remarquons que les inclusions $T_G \subset \SA_G \subset G$
%\begin{displaymath}
%\xymatrix{
%T_G \ar[rd] \ar[rr] & & G \\
%& \SA_G \ar[ru] & 
%}
%\end{displaymath}
induisent un morphisme naturel $s : G\abvar \to W$ de sorte que $p \circ s = \textup{id}_{G\abvar}$.
De la m\^eme fa\c con, on a un morphisme naturel $q : W \to W\lin$ obtenu en identifiant $W\lin$ \`a $G / \SA_G$, de sorte que $q \circ i = \textup{id}_{W\lin}$.
Consid\'erons alors la suite de morphismes suivante :
\begin{equation}
\label{sec W}
0 \to \Pic(\ov{G}\abvar) \xrightarrow{p^*} \Pic(\ov{W}) \xrightarrow{i^*} \Pic(\ov{W}\lin) \to 0 \, .
\end{equation}
Cette suite est clairement un complexe. L'existence des morphismes $s$ et $q$ assure son exactitude en $\Pic(\ov{G}\abvar)$ et en $\Pic(\ov{W}\lin)$, ainsi que l'existence d'une section du morphisme $i^*$.
Montrons que cette suite est exacte en $\Pic(\ov{W})$.

On dispose d'un diagramme commutatif, issu du diagramme (\ref{diag W}), dont les lignes et les colonnes sont exactes (hormis la troisi\`eme ligne en $\Pic(\ov{W})$) :
\begin{equation}
\xymatrix{
%& & 0 \ar[d] & 0 \ar[d] & \\
& 0 \ar[r] & \widehat{G} \ar[r]^a \ar@{^{(}->}[d]^{\alpha} & \widehat{G\lin} \ar@{^{(}->}[d]^{\alpha'} \ar[r]^t & \Pic(\ov{G}\abvar) \\
& & \widehat{T_G} \ar[r]^b_= \ar[d]^{\beta} & \widehat{T_G} \ar[d]^{\beta'} & \\
0 \ar[r] & \Pic(\ov{G}\abvar) \ar[r]^{p^*} \ar[d]^= & \Pic(\ov{W}) \ar[r]^{i^*} \ar@{->>}[d]^{\gamma} & \Pic(\ov{W}\lin) \ar[r] \ar@{->>}[d]^{\gamma'} & 0 \\
\widehat{G\lin} \ar[r]^t & \Pic(\ov{G}\abvar) \ar[r]^e & \Pic(\ov{G})
 \ar[r]^d & \Pic(\ov{G}\lin) & \, .
%& & 0 & 0 & \, . 
}
\end{equation}
Deux applications du lemme du serpent aux deux colonnes de ce diagramme assure l'existence d'une suite exacte canonique
$$0 \to \widehat{G\lin} / \widehat{G} \xrightarrow{h} \textup{Ker}(i^*) \xrightarrow{\gamma} \textup{Ker}(d) \to 0$$
qui s'ins\`ere dans le diagramme suivant \`a lignes exactes :
\begin{displaymath}
\xymatrix{
0 \ar[r] & \widehat{G\lin} / \widehat{G} \ar[r]^t \ar[d]^= & \Pic(\ov{G}\abvar) \ar[r]^{e} \ar[d]^{p^*} & \textup{Ker}(d) \ar[r] \ar[d]^= & 0 \\
0 \ar[r] &  \widehat{G\lin} / \widehat{G} \ar[r]^{h} & \textup{Ker}(i^*) \ar[r]^{\gamma} & \textup{Ker}(d) \ar[r] & 0 \, .
}
\end{displaymath}
On v\'erifie alors facilement que ce diagramme est commutatif 
%(en explicitant $h$ qui appara\^it via le lemme du serpent) 
et on conclut avec le lemme des cinq que $p^*$ identifie $\Pic(\ov{G}\abvar)$ \`a $\textup{Ker}(i^*)$, ce qui conclut la preuve de l'exactitude de la suite (\ref{sec W}), ce qui assure donc que l'on a un isomorphisme canonique
$$s^* \oplus i^* : \Pic(\ov{W}) \xrightarrow{\cong} \Pic(\ov{G}\abvar) \oplus \Pic(\ov{W}\lin) \, .$$
On conclut alors la preuve du lemme \ref{lem Pic T G} en utilisant les isomorphismes fonctoriels de \cite{KKV} (proposition 5.1) $\Pic(\ov{W}) \xrightarrow{\cong} \Pic_{\ov{T_G}}(\ov{G})$ et $\Pic(\ov{W}\lin) \xrightarrow{\cong} \Pic_{\ov{T_G}}(\ov{G}\lin)$.
\end{proof}

Utilisons maintenant le lemme \ref{lem Pic T G} pour montrer le lemme \ref{lem qis toit}. Gr\^ace \`a \cite{BvHprep}, th\'eor\`eme 3.12, le complexe central dans (\ref{gd diag UPic}) est canoniquement quasi-isomorphe au complexe
$$[\widehat{T_G} \xrightarrow{\lambda} \Pic_{\ov{T_G}}(\ov{G})] \, ,$$
puisque $U_{\ov{T}_G}(\ov{G}) = U_{\ov{T}_G}(\ov{G}\tor) = 0$, en utilisant les notations et le lemme 5.4 de \cite{BvHprep}.

Or ce complexe s'inscrit dans le diagramme commutatif suivant 
\begin{displaymath}
\xymatrix{
\widehat{T_G} \ar[rr]^(.3){\lambda\lin \oplus \Delta'_{\SA_G / T_G}} & & \Pic_{\ov{T}_G}(\ov{G}\lin) \oplus \Pic(\ov{G}\abvar) \\
\widehat{T_G}  \ar[rr]^{\lambda} \ar[u]^{=} & & \Pic_{\ov{T}_G}(\ov{G}) \ar[u]^{f_1 \oplus f_2} \, .
}
\end{displaymath}
qui est un isomorphisme entre les complexes horizontaux par le lemme \ref{lem Pic T G}. Enfin, on peut identifier canoniquement $\Pic_{\ov{T}_G}(\ov{G}\lin)$ \`a $\widehat{T_{G\sc}}$ (en utilisant la proposition 5.1 de \cite{KKV}), et donc le complexe $[\widehat{T_G} \xrightarrow{\lambda\lin \oplus \Delta'_{\SA_G / T_G}} \Pic_{\ov{T}_G}(\ov{G}\lin) \oplus \Pic(\ov{G}\abvar)]$ au complexe $[\widehat{T_G} \xrightarrow{\widehat{\rho} \oplus \Delta'_{\SA_G / T_G}} \widehat{T_{G\sc}} \oplus \Pic(\ov{G}\abvar)]$, ce qui assure que le carr\'e sup\'erieur dans le lemme \ref{lem qis toit} est un quasi-isomorphisme. 
\end{proof}

Il est alors clair que le lemme \ref{lem qis toit} implique le th\'eor\`eme \ref{theo qis}.
\end{proof}

\subsubsection{Cas des espaces homog\`enes de groupes lin\'eaires connexes}
Revenons maintenant au cas de l'espace homog\`ene $X$ de $G$. Le lien $L_{\ov{G}/X}$ sur $\ov{H}$, d\'efini par la donn\'ee de recollement induite par $\ov{x} \in X(\ov{k})$, d\'efinit une $k$-forme $Z_{H\red}$ du centre de $\ov{H}\red$. De m\^eme, ce lien d\'efinit une $k$-forme $Z_{H\sc}$ du centre de $\ov{H}\sc$. Fixons une d\'ecomposition de L\'evi de $\ov{H}$. L'inclusion induite par cette d\'ecomposition $Z_{H\red} \rightarrow G$ n'est pas en g\'en\'eral $\Gamma_k$-\'equivariante. En revanche, on v\'erifie qu'au niveau des modules de caract\`eres, le morphisme induit $\widehat{G} \to \widehat{Z_{H\red}}$ est $\Gamma_k$-\'equivariant et ne d\'epend pas de la d\'ecomposition de L\'evi choisie.
On dispose ainsi d'un complexe canonique de modules galoisiens de longueur $3$ :
$$\widehat{\ov{C}}_X := \left[\widehat{G} \to \widehat{Z_{\ov{H}\red}} \to \widehat{Z_{\ov{H}\sc}} \right] \, .$$

On peut comparer le complexe $C_{\ov{Z}/X}$ avec le complexe de modules galoisiens $\widehat{\ov{C}}_X$.

Pour cela, on consid\`ere le diagramme suivant :
\begin{equation}
\label{gd diag crucial}
\xymatrix{
\widehat{G} \ar[r] \ar[d]^= & \widehat{Z_{\ov{H}\red}} \ar[r] \ar[d]^{\cong} & \widehat{Z_{\ov{H}\sc}} \ar[d]^{\cong} \\
\widehat{G} \ar[r] & \Pic_{\ov{G}}(\ov{Z}) \ar[r] & \Pic(\ov{H'}) \\
\widehat{G} \oplus \ov{k}(Z)^* / \ov{k}^* \ar[r] \ar[u]^{\textup{pr}_1}
\ar[d]^{\textup{pr}_2} & \UPic_{\ov{G}}(\ov{Z})^1 \ar[r] \ar[u]^{\nu'} \ar[d]^{\mu} & \Pic(\ov{H'})
\ar[u]^{=} \ar[d]^{\cong} \\
\ov{k}(Z)^* / \ov{k}^* \ar[r] & \Div(\ov{Z}) \ar[r] &
\Pic'(\ov{Z}/\ov{X}) \, .
}
\end{equation}
D\'efinissons les deux morphismes $\widehat{Z_{\ov{H}\red}} \cong \Pic_{\ov{G}}(\ov{Z})$ et $\widehat{Z_{\ov{H}\sc}} \cong \Pic(\ov{H'})$. Pour le premier, c'est le morphisme de \cite{KKLV}, exemple 2.1 (voir aussi \cite{Pop}, th\'eor\`eme 4). Pour le second, on a un isomorphisme bien connu $\widehat{Z_{\ov{H}\sc}} \xrightarrow{\cong} \Pic(\ov{H}\sc / {Z_{\ov{H}\sc}})$, et le morphisme $\ov{H}\sc / {Z_{\ov{H}\sc}} \to \ov{H}\red / Z_{\ov{H}\red} = \ov{H}'$ est un isomorphisme de groupes alg\'ebriques, donc $\Pic(\ov{H'}) \xrightarrow{\cong} \Pic(\ov{H}\sc / {Z_{\ov{H}\sc}})$, d'o\`u le second isomorphisme recherch\'e.

On v\'erifie facilement que ce diagramme est commutatif. On voit ce diagramme comme un diagramme entre les complexes de modules galoisiens horizontaux. On v\'erifie que ce diagramme r\'ealise des quasi-isomorphismes entre les complexes des trois premi\`eres lignes.

On d\'eduit donc de ce diagramme un morphisme canonique dans la cat\'egorie d\'eriv\'ee : 
$$\widehat{\ov{C}}_X \to C_{\ov{G}/X} \, ,$$
qui s'int\`egre dans le triangle exact suivant :
$$\widehat{\ov{C}}_X \to C_{\ov{G}/X} \to \Pic(\ov{G})[-1] \to \widehat{\ov{C}}_X[1] \, .$$
On \'ecrit alors la suite exacte longue associ\'ee \`a ce triangle,
ainsi que la suite exacte du th\'eor\`eme \ref{theo torseurs}, et on
obtient les suites exactes suivantes :
$$0 \to \H^1(k, \widehat{\ov{C}}_X) \to \H^1(k, C_{\ov{G}/X}) \to
\Pic(\ov{G})^{\Gamma_k} \to \H^2(k, \widehat{\ov{C}}_X) \to \H^2(k,
C_{\ov{G}/X}) \to H^1(k, \Pic(\ov{G}))$$
$$0 \to \textup{Pic}(X) \to \H^1(k, C_{\ov{G}/X}) \to \textup{Br}(k) \to
\textup{Br}_1(X,G) \to \H^2(k, C_{\ov{G}/X}) \to N^3(k,\Gm) \, ,$$
o\`u $N^3(k, \Gm) := \textup{Ker}(H^3(k, \Gm) \to H^3(X, \Gm))$. On obtient aussi les isomorphismes $U(X) \xrightarrow{\cong} \H^0(k, C_{\ov{G}/X}) \xleftarrow{\cong} \H^0(k,\widehat{\ov{C}}_X)$.
%Enfin, on peut identifier les deux groupes $\Br_1(X,Z)$ et $\Br_1(X,G)$ comme plus haut, puisque sur $\ov{k}$, on a le m\^eme diagramme commutatif que pr\'ec\'edemment, \`a  savoir :
%\begin{displaymath}
%\xymatrix{
%\ov{G} \ar[r]^{\ov{H}\uu} \ar[ddr]_{\ov{H}} & \ov{Z'} \ar[dd]^{\ov{H}\red} \ar[rd]^{Z_{\ov{H}\red}} & \\
%& & \ov{Z} \ar[ld]^{\ov{H'}} \\
%& \ov{X} & \, .
%}
%\end{displaymath}
On peut donc finalement \'enoncer le r\'esultat souhait\'e :
%On fait alors exactement les m\^emes raisonnements que dans la section
%pr\'ec\'edente, en notant les deux points suivants :
%\begin{itemize}
%  \item Le morphisme $X^*(Z_{\ov{H}_x}) \to \Pic_{\ov{G}}(\ov{Z})$ est
%    un isomorphisme de modules galoisiens.
%La structure de module galoisien sur $Z_{\ov{H}_x}$ est induite par le
%$k$-lien $L_X$ sur $\ov{H}_x$, et l'action de Galois sur
%$\Pic_{\ov{G}}(\ov{Z})$ est induite par l'action de $\Gamma_k$ sur $Z$.
%    \item Le morphisme $\textup{ev}_{\ov{x}} : \Pic'(\ov{Z}/\ov{X})
%      \to \Pic(\ov{Z}_{\ov{x}}) = \Pic(\ov{H'})$ est un morphisme de
%      modules galoisiens. ??
%\end{itemize}
%Ces deux points \'etant d\'emontr\'es, le th\'eor\`eme suivant se
%montre exactement de la m\^eme fa\c con que les r\'esultats de la
%section pr\'ec\'edente.
\begin{theo}
\label{theo general}
Soit $k$ un corps de caract\'eristique nulle, $G$ un $k$-groupe connexe,  $X$ un espace homog\`ene de $G$, \`a
stabilisateurs g\'eom\'etriques lin\'eaires connexes $\ov{H}$. 
%On suppose $G\ss$ simplement connexe.
D\'efinissons le complexe de modules galoisiens suivant
$$\widehat{\ov{C}}_X := \left[ \widehat{G} \to \widehat{Z_{\ov{H}\red}} \to \widehat{Z_{\ov{H}\sc}} \right] \, .$$
\begin{itemize}
  \item On a des isomorphismes naturels
$$\H^0(k,\widehat{\ov{C}}_X) \cong \H^0(k, C_{\ov{G}/X}) \cong U(X) \, .$$
\item On a des suites exactes :
\begin{equation}
\label{gd diag general}
0 \to \H^1(k, \widehat{\ov{C}}_X) \to \H^1(k, C_{\ov{G}/X}) \to
\Pic(\ov{G})^{\Gamma_k} \to \H^2(k, \widehat{\ov{C}}_X) \to \H^2(k,
C_{\ov{G}/X}) \to H^1(k, \Pic(\ov{G}))
\end{equation}
et
\begin{equation}
\label{gd diag general 2}
0 \to \textup{Pic}(X) \to \H^1(k, C_{\ov{G}/X}) \to \textup{Br}(k) \to \textup{Br}_1(X,G) \to \H^2(k,
 C_{\ov{G}/X}) \to N^3(k, \Gm) \, ,
\end{equation}
o\`u $N^3(k, \Gm) := \textup{Ker}(H^3(k, \Gm) \to H^3(X, \Gm))$.
En particulier, si $\Pic(\ov{G}) = 0$, on a une suite exacte naturelle :
$$0 \to \Pic(X) \to \H^1(k, \widehat{\ov{C}}_X) \to \Br(k) \to
      \Br_1(X,G) \to \H^2(k, \widehat{\ov{C}}_X) \to N^3(k, \Gm) \, .$$
\end{itemize}
\end{theo}

%Alors il existe des morphismes naturels
%$$U(X) \hookrightarrow \H^0(k, C_Z) \leftarrow \H^0(k, \widehat{C}_X)$$
%$$\Pic(X) \hookrightarrow \H^1(k, C_Z) \leftarrow \H^1(k, \widehat{C}_X)$$
%$$\Br(X,G) \hookrightarrow \H^2(k, C_Z) \leftarrow \H^2(k, \widehat{C}_X)$$
%tels que 
%\begin{itemize}
%  \item les morphismes de la premi\`ere lignes sont des isomorphismes.
%    \item dans la deuxi\`eme ligne, le premier morphisme est un
%isomorphisme si $X(k) \neq \emptyset$ ou $\Br(k) = 0$, le second si
%$G$ est lin\'eaire.
%\item dans la troisi\`eme ligne, le premier morphisme est un
%  isomorphisme si $X(k) \neq \emptyset$ ou $H^3(k, \Gm) = 0$, le
%  second si $G$ est lin\'eaire.
%\end{itemize}
%\end{theo}
%
%\begin{proof}
%Pour montrer ce r\'esultat, on utilise le lemme suivant :
%\begin{lem}
%On a un triangle exact naturel
%$$\widehat{C}_X \rightarrow \textup{UPic}'(X,G) \rightarrow
%\textup{NS}(\ov{G^{\textup{ab}}})[-1] \rightarrow \widehat{C}_X[1] \, .$$
%\end{lem}
%
%\begin{proof}
%Voir le diagramme (\ref{qis1}).
%\end{proof}
%\end{proof}

On en d\'eduit facilement le corollaire suivant concernant le groupe de Picard de $X$ :
\begin{cor}
\label{cor general 1}
Consid\'erons les inclusions naturelles 
$$\H^1(k, \widehat{\ov{C}}_X) \xrightarrow{a} \H^1(k, C_{\ov{G}/X}) \xleftarrow{b} \textup{Pic}(X)$$
induites par (\ref{gd diag general}) et (\ref{gd diag general 2}).
\begin{itemize}
  \item Si $X(k) \neq \emptyset$ ou plus g\'en\'eralement si
	$\textup{Br}(k)$ s'injecte dans $\Br(X)$, alors $b$
    est un isomorphisme, donc on a une suite exacte canonique
$$0 \to  \H^1(k, \widehat{\ov{C}}_X) \xrightarrow{a} \textup{Pic}(X) \to \Pic(\ov{G})^{\Gamma_k} \, .$$
\item Si $G$ est lin\'eaire et $G\ss$ simplement connexe (i.e. $\Pic(\ov{G}) = 0$), alors $a$ est un isomorphisme et on a
  donc une suite exacte canonique
$$0 \to \textup{Pic}(X) \xrightarrow{b} \H^1(k, \widehat{\ov{C}}_X) \to
      \textup{Ker}(\Br(k) \to \Br(X)) \, .$$
\item Si $\textup{Br}(k)$ s'injecte dans $\Br(X)$ (par exemple $X(k)
      \neq \emptyset$) et $\Pic(\ov{G}) = 0$, alors on a un isomorphisme canonique
$$\H^1(k, \widehat{\ov{C}}_X) \cong \textup{Pic}(X)\, .$$
\end{itemize}
\end{cor}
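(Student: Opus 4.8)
The plan is to derive all three assertions from the two exact sequences (\ref{gd diag general}) and (\ref{gd diag general 2}) of Theorem \ref{theo general} by elementary diagram chasing, without any further geometric input. First I would record that the maps $a$ and $b$ are nothing but the injections appearing at the left-hand ends of (\ref{gd diag general}) and (\ref{gd diag general 2}), so both are already injective; the only point to establish is the behaviour of their cokernels under the stated hypotheses. I would also note at the outset that the hypothesis ``$\Br(k)$ s'injecte dans $\Br(X)$'' is implied by $X(k) \neq \emptyset$, since a rational point $x \in X(k)$ yields an evaluation map $x^* : \Br(X) \to \Br(k)$ splitting the structural pullback $\Br(k) \to \Br(X)$.

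For the first point I would argue as follows. Since $\Br_1(X,G)$ is by definition a subgroup of $\Br(X)$ containing the image of $\Br(k)$, the kernel of the map $\Br(k) \to \Br_1(X,G)$ occurring in (\ref{gd diag general 2}) coincides with $\textup{Ker}(\Br(k) \to \Br(X))$, which is trivial under the hypothesis. Hence $\Br(k) \to \Br_1(X,G)$ is injective, and by exactness of (\ref{gd diag general 2}) the preceding arrow $\H^1(k, C_{\ov{G}/X}) \to \Br(k)$ vanishes; therefore $b : \Pic(X) \to \H^1(k, C_{\ov{G}/X})$ is surjective, hence an isomorphism. Transporting the initial segment of (\ref{gd diag general}) through $b^{-1}$ then produces exactly the exact sequence $0 \to \H^1(k, \widehat{\ov{C}}_X) \xrightarrow{a} \Pic(X) \to \Pic(\ov{G})^{\Gamma_k}$.

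For the second point I would instead use $\Pic(\ov{G}) = 0$, so that $\Pic(\ov{G})^{\Gamma_k} = 0$ and (\ref{gd diag general}) shows that $a : \H^1(k, \widehat{\ov{C}}_X) \to \H^1(k, C_{\ov{G}/X})$ is an isomorphism. Transporting (\ref{gd diag general 2}) through $a^{-1}$, and again identifying $\textup{Ker}(\Br(k) \to \Br_1(X,G))$ with $\textup{Ker}(\Br(k) \to \Br(X))$ as above, yields the exact sequence $0 \to \Pic(X) \xrightarrow{b} \H^1(k, \widehat{\ov{C}}_X) \to \textup{Ker}(\Br(k) \to \Br(X))$. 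The third point is then immediate: under both hypotheses the exact sequence of the first point has vanishing final term $\Pic(\ov{G})^{\Gamma_k} = 0$, so $a$ becomes an isomorphism onto $\Pic(X)$, i.e.\ $\H^1(k, \widehat{\ov{C}}_X) \cong \Pic(X)$. There is no genuine difficulty here; the only point requiring care is the repeated identification of the two kernels of maps out of $\Br(k)$ and the bookkeeping of transporting the sequences along the isomorphisms $a$ and $b$.
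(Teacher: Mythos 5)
Your proof is correct and is exactly the deduction the paper intends: the paper gives no separate argument for this corollary (it is stated as an easy consequence of Theorem \ref{theo general}), and your diagram chase — splitting $\Br(k) \to \Br(X)$ by evaluation at a rational point, identifying $\textup{Ker}(\Br(k) \to \Br_1(X,G))$ with $\textup{Ker}(\Br(k) \to \Br(X))$ via the inclusion $\Br_1(X,G) \subset \Br(X)$, and using the vanishing of $\Pic(\ov{G})^{\Gamma_k}$ when $\Pic(\ov{G})=0$ — is precisely that deduction from the exact sequences (\ref{gd diag general}) and (\ref{gd diag general 2}).
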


On d\'eduit de m\^eme le corollaire suivant, concernant le groupe de Brauer de $X$ :
\begin{cor}
\label{cor general 2}
Consid\'erons les morphismes naturels
$$\H^2(k, \widehat{\ov{C}}_X) \xrightarrow{a} \H^2(k, C_{\ov{G}/X}) \xleftarrow{b} \textup{Br}_a(X,G)$$
induits par (\ref{gd diag general}) et (\ref{gd diag general 2}).
\begin{itemize}
  \item Si $X(k) \neq \emptyset$ ou si plus g\'en\'eralement $H^3(k,
	\Gm)$ s'injecte dans $H^3(X, \Gm)$, alors $b$
    est un isomorphisme, donc on a une suite exacte canonique
$$\Pic(\ov{G})^{\Gamma_k} \to \H^2(k, \widehat{\ov{C}}_X) \xrightarrow{a} \textup{Br}_a(X, G) \to H^1(k, \Pic(\ov{G})) \, .$$
\item Si $\Pic(\ov{G}) = 0$, alors $a$ est un isomorphisme et on a
  donc une suite exacte canonique
$$0 \to \textup{Br}_a(X,G) \xrightarrow{b} \H^2(k, \widehat{\ov{C}}_X)
      \to \textup{Ker}(H^3(k, \Gm) \to H^3(X, \Gm)) \, .$$
\item Si $H^3(k, \Gm)$ s'injecte dans $H^3(X, \Gm)$ et $\Pic(\ov{G}) = 0$, on a un isomorphisme canonique
$$\H^2(k, \widehat{\ov{C}}_X) \cong \textup{Br}_a(X,G) \, .$$
\end{itemize}
\end{cor}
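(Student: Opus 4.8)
The plan is to read off all three statements directly from the two long exact sequences (\ref{gd diag general}) and (\ref{gd diag general 2}) produced by Theorem \ref{theo general}; these are the only inputs, and the argument reduces to tracking which end terms vanish under each hypothesis.

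First I would pin down how $b$ arises. Exactness of (\ref{gd diag general 2}) at $\textup{Br}_1(X,G)$ shows that the composite $\textup{Br}(k) \to \textup{Br}_1(X,G) \to \H^2(k, C_{\ov{G}/X})$ is zero, so the arrow out of $\textup{Br}_1(X,G)$ factors through the quotient $\textup{Br}_a(X,G) = \textup{Br}_1(X,G)/\textup{Br}(k)$; the induced map is precisely $b$, and the same exactness identifies its kernel as $0$ and its image as $\textup{Ker}(\H^2(k,C_{\ov{G}/X}) \to N^3(k,\Gm))$. Thus $b$ is injective unconditionally, and it is an isomorphism exactly when $N^3(k,\Gm) = 0$.

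For the first bullet, the hypothesis that $H^3(k,\Gm) \to H^3(X,\Gm)$ is injective forces $N^3(k,\Gm) = 0$, so $b$ is an isomorphism; the case $X(k) \neq \emptyset$ reduces to this, since a rational point $x$ gives $x^* \circ p_X^* = \id$ on $H^3(k,\Gm)$ (with $p_X : X \to \Spec(k)$ the structural morphism), whence $p_X^*$ is split injective. Transporting the four-term tail $\Pic(\ov{G})^{\Gamma_k} \to \H^2(k,\widehat{\ov{C}}_X) \xrightarrow{a} \H^2(k, C_{\ov{G}/X}) \to H^1(k,\Pic(\ov{G}))$ of (\ref{gd diag general}) across the isomorphism $b$ yields the asserted sequence. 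For the second bullet, $\Pic(\ov{G}) = 0$ annihilates both $\Pic(\ov{G})^{\Gamma_k}$ and $H^1(k,\Pic(\ov{G}))$ at the two ends of (\ref{gd diag general}), so that sequence degenerates to show that $a$ is an isomorphism; composing $a^{-1}$ with the exact segment $\textup{Br}_a(X,G) \xrightarrow{b} \H^2(k,C_{\ov{G}/X}) \to N^3(k,\Gm)$ of (\ref{gd diag general 2}) gives the stated exact sequence. The third bullet is simply the conjunction of the two mechanisms: $\Pic(\ov{G})=0$ makes $a$ an isomorphism and the injectivity of $H^3(k,\Gm) \to H^3(X,\Gm)$ makes $b$ an isomorphism, so their composite $\H^2(k,\widehat{\ov{C}}_X) \cong \textup{Br}_a(X,G)$ is the desired identification.

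Since both exact sequences are already established, there is no genuine obstacle here; the only points that really need care are the elementary observation that a rational point makes $p_X^*$ injective on $H^3$ and the bookkeeping of $a$ and $b$ so that the transported sequences carry the correct arrows. The one subtlety to watch is that $b$ is defined on $\textup{Br}_a$ rather than on $\textup{Br}_1$, so one must check the factorization through $\textup{Br}(k)$ before invoking exactness at $\H^2(k, C_{\ov{G}/X})$.
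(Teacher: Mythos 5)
Your proof is correct and is exactly the deduction the paper intends: the corollary is stated there as an immediate consequence of Theorem \ref{theo general} (``on d\'eduit de m\^eme''), and your bookkeeping with the sequences (\ref{gd diag general}) and (\ref{gd diag general 2}) — the factorization of the map out of $\textup{Br}_1(X,G)$ through $\textup{Br}_a(X,G)$, the vanishing of the end terms $\Pic(\ov{G})^{\Gamma_k}$ and $H^1(k,\Pic(\ov{G}))$ when $\Pic(\ov{G})=0$, and $N^3(k,\Gm)=0$ from a rational point or the injectivity hypothesis — supplies precisely the details the paper omits. One small overstatement: $b$ is an isomorphism \emph{if} $N^3(k,\Gm)=0$, but not ``exactly when'' (the map $\H^2(k,C_{\ov{G}/X}) \to N^3(k,\Gm)$ could vanish even if $N^3(k,\Gm)\neq 0$); since you only invoke the correct direction, this does not affect the argument.
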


\subsubsection{Cas des espaces homog\`enes munis d'un point rationnel}
Dans cette sous-section, on s'int\'eresse au cas d'un espace homog\`ene $X$ d'un groupe connexe $G$ quelconque, sans l'hypoth\`ese $\Pic(\ov{G}) = 0$, mais en supposant que $X(k) \neq \emptyset$. On a besoin d'enlever l'hypoth\`ese $\Pic(\ov{G}) = 0$ pour traiter les espaces homog\`enes de groupes non lin\'eaires.

On se donne donc un $k$-sous-groupe connexe $H \subset G$ tel que $X \cong G/H$.
Suivant \cite{BCTS}, proposition 3.1, quitte \`a quotienter $G$ et $H$ par le sous-groupe $Z_G \cap H \subset H$, on peut se ramener au cas o\`u le stabilisateur est lin\'eaire connexe. Aussi, dans les \'enonc\'es de cette section, l'hypoth\`ese de lin\'earit\'e sur $H$ n'est-elle pas restrictive. Dans la suite, on suppose donc $H$ lin\'eaire.

La choix d'une d\'ecomposition de L\'evi du groupe $H$ induit un
morphisme de complexes 
$$j : C_H \to C_G \, ,$$
i.e. un carr\'e commutatif de vari\'et\'es semi-ab\'eliennes :
\begin{displaymath}
\xymatrix{
T_{H\sc} \ar[r] \ar[d] & T_{G\sc} \ar[d] \\
T_H \ar[r] & \SA_G \, .
}
\end{displaymath}
Ce morphisme de complexes est d\'efini \`a isomorphisme pr\`es (deux d\'ecompositions de L\'evi d\'efinissent des carr\'es commutatifs isomorphes).
On d\'efinit alors $C_X$ comme le c\^one du morphisme de complexes
$C_H \to C_G$, \`a savoir comme le complexe de longueur $3$ suivant :
$$C_X := \textup{C\^one}(C_H \to C_G) = [T_{H\sc} \to T_H \oplus
T_{G\sc} \to \SA_G]$$
form\'e de vari\'et\'es semi-ab\'eliennes.
On peut alors d\'efinir un nouveau complexe ${\widehat{C}''}_X$ comme le dual de
ce complexe dans la cat\'egorie des complexes de $1$-motifs,
c'est-\`a-dire comme le complexe de $1$-motifs suivant :
$${\widehat{C}''}_X := [\SA_G^* \to \widehat{T_H} \oplus \widehat{T_{G\sc}} \to
\widehat{T_{H\sc}}]$$
(si $S$ est une vari\'et\'e semi-ab\'elienne, $S^*$ est le $1$-motif dual
du $1$-motif $[0 \to S]$ associ\'e \`a $S$).
En \'ecrivant chacun des $1$-motifs comme un complexe de longueur $2$,
${\widehat{C}''}_X$ s'identifie au complexe de complexes suivant :
\begin{displaymath}
\xymatrix{
\widehat{T_{G\lin}} \ar[d] \ar[r] & \widehat{T_H} \oplus
\widehat{T_{G\sc}} \ar[r] \ar[d] & \widehat{T_{H\sc}} \ar[d] \\
(G\abvar)^* \ar[r] & 0 \ar[r] & 0 \, .
}
\end{displaymath}
On s'int\'eresse plus exactement aux sections sur $\ov{k}$ du c\^one
de ce double complexe, \`a savoir au complexe de modules galoisiens :
$$\widehat{C}_X := [\widehat{T_{G\lin}} \rightarrow (G\abvar)^*(\ov{k}) \oplus \widehat{T_H} \oplus
\widehat{T_{G\sc}} \rightarrow \widehat{T_{H\sc}}] \, .
$$
On introduit \'egalement une variante de ce complexe not\'ee $\widehat{C}'_X$, qui est utile dans la suite :
$$\widehat{C}'_X := [\widehat{T_{G\lin}} \rightarrow \Pic(\ov{G}\abvar) \oplus \widehat{T_H} \oplus
\widehat{T_{G\sc}} \rightarrow \widehat{T_{H\sc}}] \, .$$
On dispose en particulier d'un triangle exact canonique
$$\widehat{C}_X \to \widehat{C}'_X \to \textup{NS}(\ov{G}\abvar)[-1] \to \widehat{C}_X[1] \, .$$

\begin{exs}
\begin{itemize}
\item Si $G$ est lin\'eaire, on trouve le complexe de modules
  galoisiens de type fini :
$$\widehat{C}_X = [\widehat{T_G} \rightarrow \widehat{T_H} \oplus
\widehat{T_{G\sc}} \rightarrow \widehat{T_{H\sc}}] \, .$$
\item Si $G\ss$ est simplement connexe, on a un quasi-isomorphisme naturel 
$$\widehat{C}_X = [\widehat{G\tor} \rightarrow (G\abvar)^*(\ov{k}) \oplus
\widehat{T_H} \rightarrow \widehat{T_{H\sc}}] \, .$$
\item Si $G$ est lin\'eaire et $G\ss$ est simplement connexe, on
  obtient le complexe de modules galoisiens de type fini suivant : $\widehat{C}_X = [\widehat{G} \rightarrow \widehat{T_H} \rightarrow \widehat{T_{H\sc}}]$.
\item Si $H = 1$, i.e. $X = G$, on obtient le complexe  $\widehat{C}_G = [\widehat{T_{G\lin}} \rightarrow (G\abvar)^*(\ov{k}) \oplus \widehat{T_{G\sc}}]$ qui, si $G$ est lin\'eaire, est exactement $[\widehat{T_G} \to \widehat{T_{G\sc}}] \cong \UPic(\ov{G})$ (voir \cite{BvH}, th\'eor\`eme 4.8).
\item Si $G$ est semi-simple simplement connexe, on obtient $\widehat{C}_X := [0 \rightarrow \widehat{T_H} \rightarrow \widehat{T_{H\sc}}]$, qui s'identifie \`a $\UPic(\ov{H})[-1]$.
\end{itemize}
\end{exs}

Dans tous les cas, on remarque que les complexes $\widehat{C}_X$ et $\widehat{C}'_X$ s'int\`egrent naturellement
dans les triangles exacts suivants :
$$\widehat{C}_X \to \widehat{C}_G \to \widehat{C}_H \to
\widehat{C}_X[1]$$
$$\widehat{C}'_X \to \widehat{C}'_G \to \widehat{C}_H \to
\widehat{C}'_X[1] \, .$$
Remarquons que ce dernier triangle exact s'identifie, gr\^ace au th\'eor\`eme \ref{theo qis}, au triangle exact
$$\widehat{C}'_X \to \UPic(\ov{G}) \to \UPic(\ov{H}) \to \widehat{C}'_X[1] \, .$$
On voit donc que $\widehat{C}'_X[1]$ est un c\^one du morphisme $\UPic(\ov{G}) \to \UPic(\ov{H})$.

On dispose \'egalement d'une pr\'esentation du complexe
$\widehat{C}_X$ \`a l'aide du centre de $H\red$ :
%(voir \cite{SGA3}, expos\'e XII, paragraphe 4) : 
en effet, le centre $Z_{H\red}$ de $H\red$ est contenu dans le tore maximal $T_H$ de $H\red$, et le diagramme commutatif suivant
\begin{displaymath}
\xymatrix{
Z_{H\sc} \ar[r] \ar[d] & T_{H\sc} \ar[d] \\
Z_{H\red} \ar[r] & T_H
}
\end{displaymath}
d\'efinit un quasi-isomorphisme de complexes entre $[Z_{H\sc} \to Z_{H\red}]$
et $[T_{H\sc} \to T_H]$, lequel induit par dualit\'e un
quasi-isomorphisme canonique
$$\widehat{C}_X \xrightarrow{\textup{qis}} [\widehat{T_{G\lin}} \rightarrow (G\abvar)^*(\ov{k}) \oplus \widehat{Z_{H\red}} \oplus
\widehat{T_{G\sc}} \rightarrow \widehat{Z_{H\sc}}] \,
.$$

C'est ce complexe faisant intervenir le centre de $H$ qui est utile. On souhaite relier le complexe $\widehat{C}_X$ au groupe de Brauer de $X$. Le point crucial
est le th\'eor\`eme suivant :
%, qui est une cons\'equence imm\'ediate de th\'eor\`eme \ref{theo qis} et de la d\'efinition de $\widehat{C}_X$ :
\begin{theo}
\label{theo qis X}
Soit $X = G/H$, avec $G$ connexe et $H$ lin\'eaire connexe.
Il existe alors dans la cat\'egorie d\'eriv\'ee un isomorphisme canonique 
$$\widehat{C}'_X \cong C_{\ov{G}/X}$$
et un triangle exact canonique
$$\widehat{C}_X \to C_{\ov{G}/X} \to \textup{NS}(\ov{G}\abvar)[-1] \to \widehat{C}_X[1] \, .$$
\end{theo}

\begin{proof}
On fixe une d\'ecomposition de L\'evi pour $H$.
Posons $Z' := T_G / Z_{H\red}$, $Z := G / Z_{H\red}$ et $H' := H\red / Z_{H\red}$.
On construit un diagramme commutatif analogue au diagramme (\ref{gd diag crucial}), \`a savoir :
\begin{equation}
  \label{gd diag 2}
\xymatrix{
\widehat{T_G} \ar[r] \ar[d]^= & \Pic(\ov{G}\abvar) \oplus \widehat{T_{G\sc}} \oplus \widehat{Z_{\ov{H}\red}} \ar[r] \ar[d]^{\cong} & \widehat{Z_{\ov{H}\sc}} \ar[d]^{\cong} \\
\widehat{T_G} \ar[r] & \Pic(\ov{G}\abvar) \oplus \Pic_{\ov{T_G}}(\ov{G}\lin) \oplus \Pic_{\ov{T_G}}(\ov{Z'}) \ar[r] & \Pic(\ov{H'}) \\
\widehat{T_G} \ar[r] \ar[u]^= & \Pic_{\ov{T_G}}(\ov{G}) \oplus \Pic_{\ov{T_G}}(\ov{Z'}) \ar[r] \ar[u]^{\cong} & \Pic(\ov{H'}) \ar[u]^= \\
\widehat{T_G} \ar[r] \ar[u]^= & \Pic_{\ov{T_G}}(\ov{Z}) \ar[r] \ar[u] & \Pic(\ov{H'}) \ar[u]^{=} \\
\widehat{T_G} \oplus \ov{k}(Z)^* / \ov{k}^* \ar[r] \ar[u]^{\textup{pr}_1} \ar[d]^{\textup{pr}_2} & \UPic_{\ov{T_G}}(\ov{Z})^1 \ar[r] \ar[u]^{\nu'} \ar[d]^{\mu} & \Pic(\ov{H'}) \ar[u]^{=} \ar[d]^{\cong} \\
\ov{k}(Z)^* / \ov{k}^* \ar[r] & \Div(\ov{Z}) \ar[r] & \Pic'(\ov{Z}/\ov{X}) \, .
}
\end{equation}
Le morphisme entre la premi\`ere et la deuxi\`eme ligne est clairement un isomorphisme de complexes. Le morphisme entre la troisi\`eme et la deuxi\`eme est un isomorphisme de complexes par le lemme \ref{lem Pic T G}. Montrons que le morphisme entre les quatri\`eme et troisi\`eme lignes est un isomorphisme. Il suffit de montrer que le morphisme central 
$$\Pic_{\ov{T_G}}(\ov{Z}) \to \Pic_{\ov{T_G}}(\ov{G}) \oplus \Pic_{\ov{T_G}}(\ov{Z'})$$
est un isomorphisme.
\begin{lem}
\label{lem Pic T Z}
Soient $H_1 \subset H_2 \subset G$ trois $k$-groupes. On suppose que $G$ est connexe et que $H_2$ est lin\'eaire connexe. Alors on a des isomorphismes canoniques
$$\Pic_{H_2}(G/H_1) \xrightarrow{\cong} \Pic_{H_2}(G) \oplus \Pic_{H_2}(H_2/H_1) \xleftarrow{\cong} \Pic(G/H_2) \oplus \Pic_{H_2}(H_2/H_1) \, .$$
\end{lem}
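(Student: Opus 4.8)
The second isomorphism in the statement is the easy one: it is induced by the canonical isomorphism $\Pic(G/H_2)\cong\Pic_{H_2}(G)$ of proposition 5.1 of \cite{KKV} (already used in the proof of Lemma \ref{lem Pic T G}) applied to the torsor $G\to G/H_2$, tensored with the identity of $\Pic_{H_2}(H_2/H_1)$. So the whole content is the first isomorphism, and the plan is to produce it from the fibration structure of $G/H_1$ over $G/H_2$.

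First I would record the geometry. Since $H_1\subset H_2$, the group $H_2$ acts on $G/H_1$, the quotient map $q:G\to G/H_1$ and the closed immersion $\iota:H_2/H_1\hookrightarrow G/H_1$ (which identifies $H_2/H_1$ with the fibre over the base point of the projection $\pi:G/H_1\to G/H_2$) are both $H_2$-equivariant, and one has a canonical $H_2$-equivariant identification $G/H_1\cong G\times^{H_2}(H_2/H_1)$ exhibiting $G/H_1$ as the bundle over $G/H_2$ with fibre $H_2/H_1$ associated to the $H_2$-torsor $G\to G/H_2$. The candidate isomorphism is then $\Phi:=(q^*,\iota^*):\Pic_{H_2}(G/H_1)\to\Pic_{H_2}(G)\oplus\Pic_{H_2}(H_2/H_1)$, the first component being pullback to the total space $G$ and the second restriction to the fibre; under $\Pic_{H_2}(G)\cong\Pic(G/H_2)$ these are exactly the two vertical maps used in diagram (\ref{gd diag 2}).

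To prove that $\Phi$ is an isomorphism I would base change along the $H_2$-torsor $f:G\to G/H_2$. Since the associated bundle becomes trivial after such a base change, one gets an $H_2$-equivariant identification $f^*(G/H_1)\cong G\times(H_2/H_1)$, and descent for (equivariant) Picard groups along the torsor $f$ reduces the computation of $\Pic_{H_2}(G/H_1)$ to the equivariant Picard group of this trivialised product. The latter splits into $\Pic_{H_2}(G)\oplus\Pic_{H_2}(H_2/H_1)$ by a Künneth argument in the spirit of \cite{San}, lemme 6.6 (the splitting $\Pic(V\times G)\cong\Pic(V)\oplus\Pic(G)$ recalled before Lemma \ref{lem diag UPic comm}): because $G$ is connected there are no mixed Picard classes and no unit obstruction. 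Tracking the maps through this identification, the two projections are precisely $q^*$ and $\iota^*$, so the composite is $\Phi$.

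The hard part will be the bookkeeping in this last step: the two copies of $H_2$ at play (the left translation action carrying the equivariance, and the action defining the torsor $G\to G/H_2$) do not a priori form a clean product action on $G\times(H_2/H_1)$, so one must decouple them and check that passing from ordinary to equivariant Picard introduces neither cross terms nor corrections coming from characters of $H_2$ or units on $G$. I expect to handle this exactly as Sansuc does, the connectedness of $G$ and the triviality of $\Pic$ and $\ov{k}[\,\cdot\,]^*/\ov{k}^*$ on the relevant factors being what makes the splitting canonical. An alternative, closer to the method of Lemma \ref{lem Pic T G}, would avoid descent altogether: derive from the fibration $\pi$ a short exact sequence relating $\Pic_{H_2}(G/H_1)$, $\Pic_{H_2}(G)$ and $\Pic_{H_2}(H_2/H_1)$, exhibit the splitting of $\iota^*$, and conclude with the five lemma; there too the delicate point is exactness in the middle, which is where the connectedness of $G$ enters.
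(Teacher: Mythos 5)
Your candidate map $\Phi=(q^*,\iota^*)$ is the right one, and it is formally equivalent to the decomposition the paper proves: given the left-exact sequence $0\to\Pic_{H_2}(H_2/H_1)\to\Pic_{H_2}(G/H_1)\xrightarrow{q^*}\Pic_{H_2}(G)$, the identity $\iota^*\circ j=\textup{id}$ on the fibre term, and surjectivity of $q^*$, a short chase shows $\Phi$ is bijective. The gap is in your main argument, exactly at the point you flag and then defer. After base change along $f$, the descent datum on $G\times(H_2/H_1)$ is the \emph{twisted} diagonal action $(g,x)\cdot h=(gh,h^{-1}x)$, while the equivariance you must retain is left translation on the $G$-factor; so what you actually need is the Picard group of $G\times(H_2/H_1)$ equivariant for $H_2\times H_2$ acting by $(h_1,h_2)\cdot(g,x)=(h_1gh_2^{-1},h_2x)$, and this cannot be untwisted into a product action (the bundle $G/H_1\to G/H_2$ is not trivial). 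Sansuc's lemme 6.6 computes the \emph{ordinary} Picard group of a product and says nothing about how linearization/descent data distribute between the factors: the difference between $\Pic_{H_2}$ and $\Pic$ is controlled by $\widehat{H_2}$ and by the invariant units, via the exact sequence of \cite{KKV} used in the paper, $U_{H_2}(Y)\to U(Y)\to\widehat{H_2}\to\Pic_{H_2}(Y)\to\Pic(Y)\to\Pic(H_2)$. In particular your parenthetical ``no unit obstruction'' is false: $U(G)=\widehat{G}$ is nonzero in general, and it produces the correction term $K:=\textup{Ker}(\widehat{H_1}\to\Pic(G/H_1))$ (the image of $U(G)$ in $\widehat{H_1}$), which has to cancel against a matching term before $\textup{Ker}(q^*)$ can be identified with $\widehat{H_1}\cong\Pic_{H_2}(H_2/H_1)$. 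Your sketch provides no mechanism for this cancellation.

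The paper's proof is precisely your ``alternative'' route, and the details you postpone are its entire content: write the above sequences for the two $H_2$-varieties $G/H_1$ and $G$, linked by $q^*$; the snake lemma yields $0\to K\to\textup{Ker}(q^*)\to\widehat{H_1}/U(G)\to 0$; comparing with $0\to K\to\widehat{H_1}\to\widehat{H_1}/U(G)\to 0$ (the middle map being $\widehat{H_1}\xrightarrow{\cong}\Pic_{H_2}(H_2/H_1)\to\Pic_{H_2}(G/H_1)$) and applying the five lemma identifies $\textup{Ker}(q^*)\cong\widehat{H_1}\cong\Pic_{H_2}(H_2/H_1)$; finally, surjectivity of $q^*$, hence the splitting, comes from the section $\Pic_{H_2}(G)\xleftarrow{\cong}\Pic(G/H_2)\to\Pic_{H_2}(G/H_1)$ obtained by pulling back along $\pi$. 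Note also that exhibiting a retraction $\iota^*$ of the fibre inclusion, as you propose to do, does \emph{not} by itself give surjectivity of $q^*$ from a merely left-exact sequence; the section through $\pi^*$ is what is needed there, and it is absent from your sketch.
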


\begin{proof}
Notons $K := \textup{Ker}(\widehat{H_1} \to \Pic(G/H_1))$. On consid\`ere le diagramme commutatif exact suivant (voir notamment \cite{KKV}, proposition 5.1) : 
\begin{displaymath}
\xymatrix{
0 \ar[d] & 0 \ar[d] & 0 \ar[d] & & \widehat{H_1} / U(G) \ar@{^{(}->}[d] &  \\
U(G/H_2) \ar@{^{(}->}[r] \ar[d]^= & U(G/H_1) \ar[r] \ar[d] & \widehat{H_2} \ar[r] \ar[d]^= & \Pic_{H_2}(G/H_1) \ar[r] \ar[d] & \Pic(G/H_1) \ar[r] \ar[d] & \Pic(H_2) \ar[d]^= \\
U(G/H_2) \ar@{^{(}->}[r] \ar[d] & U(G) \ar[r] \ar@{->>}[d] & \widehat{H_2} \ar[r] \ar[d] & \Pic_{H_2}(G) \ar[r] & \Pic(G) \ar[r] & \Pic(H_2) \\
0 & K & 0 & & & \, .
}
\end{displaymath}
Le lemme du serpent assure alors que l'on a une suite exacte canonique
$$0 \to K \to \textup{Ker}(\Pic_{H_2}(G/H_1) \to  \Pic_{H_2}(G)) \to \widehat{H_1} / U(G) \to 0$$
qui s'int\`egre dans le diagramme exact
\begin{displaymath}
\xymatrix{
0 \ar[r] & K \ar[r] \ar[d]^= & \widehat{H_1} \ar[r] \ar[d] & \widehat{H_1} / U(G) \ar[r] \ar[d]^= & 0 \\
0 \ar[r] & K \ar[r] & \textup{Ker}(\Pic_{H_2}(G/H_1) \to  \Pic_{H_2}(G)) \ar[r]  & \widehat{H_1} / U(G) \ar[r] & 0 \, ,
}
\end{displaymath}
o\`u le morphisme central est la compos\'ee $\widehat{H_1} \xrightarrow{\cong} \Pic_{H_2}(H_2/H_1) \to \Pic_{H_2}(G/H_1)$.
On v\'erifie facilement que le diagramme pr\'ec\'edent est commutatif, et le lemme des 5 assure donc que l'on a des isomorphismes canoniques 
$$\widehat{H_1} \xrightarrow{\cong} \Pic_{H_2}(H_2/H_1) \xrightarrow{\cong} \textup{Ker}(\Pic_{H_2}(G/H_1) \to  \Pic_{H_2}(G)) \, .$$
Par cons\'equent, la suite naturelle
$$0 \to \Pic_{H_2}(H_2/H_1) \to \Pic_{H_2}(G/H_1) \to  \Pic_{H_2}(G)$$
est exacte. Enfin, le morphisme $\Pic_{H_2}(G/H_1) \to  \Pic_{H_2}(G)$ est scind\'e via la compos\'ee
$$\Pic_{H_2}(G) \xleftarrow{\cong} \Pic(G/H_2) \hookrightarrow \Pic_{H_2}(G/H_1) \, ,$$
d'o\`u le lemme.
%????
%\begin{displaymath}
%\xymatrix{
% & 0 \ar[d] & 0 \ar[d] & & 0 \ar[d] \\
% & X^*(H_3) \ar[r] \ar[d] & \Pic(G/H_2) \ar[r] \ar[d] & \Pic(G/H_1) \ar[r] \ar[d]^= & \Pic(H_3) \ar[d] \\
% & X^*(H_2) \ar[r] \ar[d] & \Pic_{H_2}(G/H_1) \ar[r] \ar[d] & \Pic(G/H_1) \ar[r] & \Pic(H_2) \\
% & X^*(H_1) \ar[r]^{\cong} \ar[d] & \Pic_{H_2}(H_2/H_1) \ar[d] & & \\
% & 0 & 0 & & \, .
%}
%\end{displaymath}
%Tout d'abord, il est clair que ce diagramme commute. En outre, les lignes et les colonnes sont exactes, sauf peut-\^etre la% deuxi\`eme colonne. Enfin, cette deuxi\`eme colonne est un complexe. Son exactitude r\'esultat alors d'une chasse au diagr%amme. Enfin, cette colonne est scind\'ee : la compos\'ee 
%$$\Pic_{H_2}(G/H_1) \to \Pic_{H_2}(G) \xleftarrow{\cong} \Pic(G/H_2)$$
%d\'efinit bien une r\'etraction de l'inclusion $\Pic(G/H_2) \to \Pic_{H_2}(G/H_1)$. Cela conclut la preuve du lemme.
\end{proof}

On applique alors ce lemme \`a $H_1 = Z_{H\red}$ et $H_2 = T_G$ pour obtenir l'isomorphisme souhait\'e, et donc les complexes des lignes 3 et 4 du diagramme (\ref{gd diag 2}) sont isomorphes.

Poursuivons la preuve du th\'eor\`eme : il est clair que le morphisme entre les lignes 5 et 4 r\'ealise un quasi-isomorphisme entre les complexes correspondants.

Montrons enfin que le morphisme entre les lignes 5 et 6 est un quasi-isomorphisme : en degr\'e $0$, c'est imm\'ediat. En degr\'e 1, c'est clair en utilisant que $s : \Pic(\ov{H}') \to \Pic'(\ov{Z}/\ov{X})$ est injectif et en s'inspirant de la preuve du lemme \ref{lem qis toit}. Pour le degr\'e $2$, cela r\'esulte du fait que le morphisme canonique $s : \Pic(\ov{H}') \to \Pic'(\ov{Z}/\ov{X})$ est un isomorphisme gr\^ace au corollaire \ref{cor Leray 2}.

Finalement, le diagramme (\ref{gd diag 2}) d\'efinit un isomorphisme canonique dans la cat\'egorie d\'eriv\'ee 
$$\widehat{C}'_X \xrightarrow{\cong} C_{\ov{G}/X} \, ,$$
ce qui conclut la preuve du th\'eor\`eme.
\end{proof}

D\'eduisons des th\'eor\`emes \ref{theo torseurs} et \ref{theo qis X}, le r\'esultat suivant :
\begin{theo}
\label{theo principal}
Soit $X = G / H$, avec $G$ connexe, $H$ lin\'eaire connexe. Alors on a des isomorphismes, fonctoriels en
$(X,Y,H,\pi)$ :
$$\H^0(k, \widehat{C}_X) \xrightarrow{\cong} \H^0(k, \widehat{C}'_X) \xrightarrow{\cong} U(X) \, ,$$
$$\H^1(k,\widehat{C}'_X) \xrightarrow{\cong} \Pic(X) \, , \, \, \, \H^2(k, \widehat{C}'_X) \xrightarrow{\cong} \Br_a(X,G) \, ,$$
%$$\H^2(k, \widehat{C}'_X) \xrightarrow{\cong} \Br_a(X,G) \, ,$$
et une suite exacte fonctorielle :
$$0 \to \H^1(k,\widehat{C}_X) \to \Pic(X) \to \textup{NS}(\ov{G}\abvar)^{\Gamma_k} \to \H^2(k, \widehat{C}_X) \to \Br_a(X,G) \to H^1(k, \textup{NS}(\ov{G}\abvar)) \, .$$
En particulier, si le groupe $G$ est lin\'eaire, on a trois isomorphismes
$$\H^0(k, \widehat{C}_X) \xrightarrow{\cong} U(X) \, , \, \, \, \H^1(k,\widehat{C}_X) \xrightarrow{\cong} \Pic(X) \, , \, \, \, \H^2(k, \widehat{C}_X) \xrightarrow{\cong} \Br_a(X,G) \, .$$
%$$\H^1(k,\widehat{C}_X) \xrightarrow{\cong} \Pic(X) \, ,$$
%$$\H^2(k, \widehat{C}_X) \xrightarrow{\cong} \Br_a(X,G) \, .$$
\end{theo}

%\begin{rem}
%Comme mentionn\'e auparavant, l'hypoth\`ese $H$ lin\'eaire n'est pas restrictive : en
%effet, suivant \cite{BCTS}, proposition 3.1, \'etant donn\'e un espace
%homog\`ene $X$ d'un groupe connexe, \`a stabilisateurs connexes, on
%peut le munir d'une structure d'espace homog\`ene sous un groupe connexe
% $G'$ tel que les stabilisateurs dans $G'$ soient lin\'eaires connexes.
%\end{rem}

\begin{proof}
Par le th\'eor\`eme \ref{theo qis}, on dispose d'un isomorphisme $\widehat{C}'_X \cong C_{\ov{G}/X}$
et d'un triangle exact
$$\widehat{C}_X \to C_{\ov{G}/X} \to \textup{NS}(\ov{G}\abvar)[-1] \to \widehat{C}_X[1] \, .$$
On consid\`ere la suite exacte longue d'hypercohomologie associ\'ee \`a ce triangle, et le corollaire \ref{coro torseurs pt ratio} assure alors le r\'esultat, puisque $G(k) \neq \emptyset$.
\end{proof}

\subsection{Comparaison avec des r\'esultats ant\'erieurs}
On compare ici les r\'esultats obtenus aux th\'eor\`emes \ref{theo general}, \ref{theo qis X} et \ref{theo principal}, ainsi qu'aux corollaires \ref{cor general 1} et \ref{cor
  general 2}, avec des r\'esultats ant\'erieurs de Sansuc,
%Colliot-Th\'el\`ene et Kunyavski\u{\i}, 
Borovoi et Van Hamel, Harari et Szamuely.

On poursuit avec les m\^emes notations que pr\'ec\'edemment : $X$ est
un espace homog\`ene d'un groupe connexe $G$ sur un corps $k$ de
caract\'eristique nulle. On suppose les stabilisateurs
g\'eom\'etriques $\ov{H}$ de $X$ connexes.

\subsubsection{Cas o\`u $\ov{H} = 1$}
On s'int\'eresse donc aux torseurs sous un groupe connexe $G$. 
Suivant le lemme 5.2.(iii) de \cite{BvH}, on dispose d'un quasi-isomorphisme canonique $\varphi : \UPic(\ov{X}) \xrightarrow{\cong} \UPic(\ov{G})$.

On voit que le th\'eor\`eme \ref{theo general} et ses corollaires g\'en\'eralisent
les r\'esultats de Sansuc (voir \cite{San}, section 6) qui donnent des formules
pour $U(X)$, $\Pic(X)$ et $\textup{Br}_a(X)$ quand $G$ est un $k$-tore ou
un $k$-groupe semi-simple.

%On dispose aussi de r\'esultats de Colliot-Th\'el\`ene et Kunyavskii (voir \cite{CTK}) qui d\'ecrivent les groupes de
%Picard et de Brauer d'une compactification lisse de $X$. 

Plus r\'ecemment, citons les travaux de Harari et Szamuely (voir
\cite{HSz2}, section 4) qui obtiennent une formule pour $\textup{Br}_a(X)$ dans
le cas o\`u $G$ est une vari\'et\'e semi-ab\'elienne. Plus exactement, sous l'hypoth\`ese $H^3(k, \Gm) = 0$, les auteurs construisent un morphisme canonique 
$$\H^1(k, G^*) \to \Br_a(X) \, ,$$
qui est compatible en un certain sens avec l'accouplement de Brauer-Manin.
Sous la m\^eme hypoth\`ese $H^3(k, \Gm) = 0$, on \'etend et on pr\'ecise ici cette
construction pour un espace principal homog\`ene $X$ sous un groupe connexe $G$ quelconque avec le
th\'eor\`eme \ref{theo qis} qui fournit une suite exacte
$$\textup{NS}(\ov{G}^{\textup{ab}})^{\Gamma_k} \to \H^2(k, \widehat{C}_G) \xrightarrow{a} \textup{Br}_a(X) \to H^1(k, \textup{NS}(\ov{G}^{\textup{ab}})) \, .$$
En effet, par construction, si $G$ est une vari\'et\'e semi-ab\'elienne, on a un quasi-isomorphisme naturel $G^*(\ov{k}) \cong \widehat{C}_G [1]$ (avec les conventions de \cite{HSz2} pour le $1$-motif $G^*$).

Enfin, Borovoi et van Hamel obtiennent dans \cite{BvH} des formules dans le cas o\`u $G$ est lin\'eaire connexe quelconque. Plus pr\'ecis\'emment, ils montrent par exemple que l'on a un isomorphisme canonique 
$$\Br_a(X) \cong \H^2(k, [\widehat{T}_G \to \widehat{T}_G\sc]) \, ,$$
sous l'hypoth\`ese $H^3(k, \Gm) = 0$. On retrouve ici ce r\'esultat comme cas particulier du th\'eor\`eme \ref{theo general}.

\subsubsection{Cas o\`u $G$ est lin\'eaire}
Dans ce cas, le r\'esultat principal est d\^u \`a Borovoi et van Hamel
(voir \cite{BvH2} corollaire 3.2 et \cite{BvHprep} th\'eor\`eme 7.2) : il d\'ecrit le groupe de Brauer alg\'ebrique de $X$. Plus pr\'ecis\'emment, les auteurs montrent (sans hypoth\`ese de connexit\'e pour les stabilisateurs) l'existence d'un morphisme injectif canonique
$$\Br_a(X) \to \H^2(k, [\widehat{\ov{G}} \to \widehat{\ov{H}}])$$
qui est un isomorphisme si $X(k) \neq \emptyset$ ou $H^3(k, \Gm) = 0$.
On peut retrouver ce r\'esultat, si $\ov{H}$ est connexe, \`a  partir du corollaire \ref{cor general 2}, en calculant les \'el\'ements alg\'ebriques dans $\H^2(k, \widehat{\ov{C}}_X)$.

%On peut \'egalement citer les r\'esultats de Colliot-Th\'el\`ene et Kunyavski\u{\i} (voir \cite{CTK}) qui d\'ecrivent les groupes de Picard et de Brauer non de $X$, mais d'une compactification lisse de $X$ : \`a nouveau, leur r\'esultat ne prend en compte que la partie alg\'ebrique du groupe de Brauer en question.

\begin{rem}
Notons que tous ces r\'esultats ant\'erieurs se limitent \`a d\'ecrire
le groupe de Brauer \emph{alg\'ebrique}
$\textup{Br}_a(X)$ de l'espace homog\`ene $X$. Dans le pr\'esent
texte, on consid\`ere le groupe $\textup{Br}_a(X,G)$, contenant $\Br_a(X)$, mais qui peut contenir
des \'el\'ements transcendants.
\end{rem}

\subsection{Lien avec le complexe $\textup{UPic}(\overline{X})$ :
  groupe de Brauer alg\'ebrique}

On relie ici les r\'esultats obtenus aux sections pr\'ec\'edentes \`a
ceux de Borovoi et van Hamel dans \cite{BvH}, \cite{BvH2} et \cite{BvHprep}, qui traitent du groupe
de Brauer alg\'ebrique.

Dans cette section, $H$ est un groupe alg\'ebrique lin\'eaire connexe, $X$ est une $k$-vari\'et\'e lisse g\'eom\'etriquement int\'egre et $\pi : Y \xrightarrow{H} X$ est un torseur sous $H$.

On dispose comme plus haut d'un diagramme commutatif de torseurs :
\begin{displaymath}
\xymatrix{
Y \ar[r]^{H\uu} \ar[ddr]_{H} & Z' \ar[dd]^{H\red} \ar[rd]^{Z_{H\red}} & \\
& & Z \ar[ld]^{H'} \\
& X & \, .
}
\end{displaymath}

On v\'erifie alors facilement que le diagramme commutatif
\begin{displaymath}
\xymatrix{
\ov{k}(X)^*/\ov{k}^* \ar[r] \ar[d]^{p^*} & \textup{Div}(\ov{X}) \ar[r] \ar[d]^{p^*} & 0 \ar[d] \\
\ov{k}(Z)^*/\ov{k}^* \ar[r] \ar[d] & \textup{Div}(\ov{Z}) \ar[r] \ar[d] & \Pic(\ov{H'}) \ar[d] \\
0 \ar[r] & 0 \ar[r] & \Pic(\ov{H'})/\Pic(\ov{Z})
}
\end{displaymath}
induit un triangle exact 
$$\UPic(\ov{X}) \to C_{\ov{Y}/X} \to \Pic(\ov{H'})/\Pic(\ov{Z})[-2] \to \UPic(\ov{X})[1]\, .$$
%On utilise alors le lemme \ref{lem qis Z X} pour identifier $C_{Z/X} \cong C_{Y/X}$. 
Enfin, consid\'erons le diagramme commutatif \`a lignes exactes suivant :
\begin{equation}
\label{diagramme 2 BD}
\xymatrix{
0 \ar[r] & \Pic(\ov{X}) \ar[r] \ar[d]^= & \Pic(\ov{Z}) \ar[r]^{\varphi'} \ar[d] & \Pic(\ov{H'}) \ar[r]^{\Delta_{Z/X}} \ar[d] & \Br(\ov{X}) \ar[r] \ar[d]^= & \Br(\ov{Z}) \ar[d] \\
& \Pic(\ov{X}) \ar[r] & \Pic(\ov{Y}) \ar[r]^{\varphi} & \Pic(\ov{H}) \ar[r]^{\Delta_{Y/X}} & \Br(\ov{X}) \ar[r] & \Br(\ov{Y}) \, .
}
\end{equation}
Puisque $\Pic(\ov{H'}) \to \Pic(\ov{H})$ est surjectif (cf lemme \ref{lem surj Pic}), une chasse au diagramme assure que (\ref{diagramme 2 BD}) induit un isomorphisme
$$\Pic(\ov{H'})/\Pic(\ov{Z}) \cong \Pic(\ov{H})/\Pic(\ov{Y}) \cong \textup{Ker}(\Br(\ov{X}) \to \Br(\ov{Y}))\, .$$
Finalement, on en d\'eduit la proposition suivante :
\begin{prop}
\label{prop UPic torseurs}
Soit $H$ un $k$-groupe lin\'eaire connexe, $X$ une $k$-vari\'et\'e lisse
 g\'eom\'etriquement int\`egre et $\pi : Y \xrightarrow{H} X$ un torseur sous $H$. On a alors un triangle exact canonique 
$$\UPic(\ov{X}) \to C_{\ov{Y}/X} \to \left( \Pic(\ov{H}) / \Pic(\ov{Y}) \right) [-2] \to \UPic(\ov{X})[1]$$
o\`u $\left( \Pic(\ov{H}) / \Pic(\ov{Y}) \right) \cong \textup{Ker}(\Br(\ov{X}) \to \Br(\ov{Y}))$.
\end{prop}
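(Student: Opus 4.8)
The plan is to assemble the two displayed facts that precede the statement into the asserted triangle and cohomology identification, proving them in that order. First I would produce the triangle
$$\UPic(\ov{X}) \to C_{\ov{Y}/X} \to \left(\Pic(\ov{H'})/\Pic(\ov{Z})\right)[-2] \to \UPic(\ov{X})[1]$$
by reading the first $3\times 3$ diagram as a termwise short exact sequence of complexes of Galois modules: its top row represents $\UPic(\ov{X})$ by $[\ov{k}(X)^*/\ov{k}^* \to \Div(\ov{X})]$, its middle row is the representative $[\ov{k}(Z)^*/\ov{k}^* \to \Div(\ov{Z}) \to \Pic(\ov{H'})]$ of $C_{\ov{Y}/X}$, and the vertical arrows are the pullbacks $p^*$ along $p\colon Z \to X$, injective on functions and on divisors. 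It then suffices to check that the comparison map is a quasi-isomorphism in degrees $0$ and $1$, so that the cone is concentrated in degree $2$. For this I would use the triangle $C_{\ov{Y}/X} \to \UPic(\ov{Z}) \to \UPic(\ov{H'}) \to C_{\ov{Y}/X}[1]$ of Section~\ref{section donnee descente} together with $\UPic(\ov{H'}) \cong \Pic(\ov{H'})[-1]$ (valid since $\widehat{H'}=0$, as $H'$ is semisimple adjoint): its long exact sequence gives $\mathcal{H}^0(C_{\ov{Y}/X}) = U(\ov{Z})$, $\mathcal{H}^1(C_{\ov{Y}/X}) = \textup{Ker}(\Pic(\ov{Z}) \to \Pic(\ov{H'}))$, and $\mathcal{H}^2(C_{\ov{Y}/X}) = \Pic(\ov{H'})/\Pic(\ov{Z})$. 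By the global Rosenlicht Lemma (Proposition~\ref{prop Rosenlicht}) one has $U(\ov{Z}) = U(\ov{X})$, and by Sansuc's sequence $\Pic(\ov{X}) \hookrightarrow \Pic(\ov{Z}) \to \Pic(\ov{H'})$ one has $\textup{Ker} = \Pic(\ov{X})$; thus $p^*$ induces isomorphisms on $\mathcal{H}^0$ and $\mathcal{H}^1$ and the cone is exactly $\left(\Pic(\ov{H'})/\Pic(\ov{Z})\right)[-2]$.

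Next I would identify this degree-$2$ term using the diagram (\ref{diagramme 2 BD}), which superposes the Sansuc sequences of the two torsors $Z \to X$ and $Y \to X$. Their tails read $\Pic(\ov{Z}) \xrightarrow{\varphi'} \Pic(\ov{H'}) \xrightarrow{\Delta_{Z/X}} \Br(\ov{X}) \to \Br(\ov{Z})$ and $\Pic(\ov{Y}) \xrightarrow{\varphi} \Pic(\ov{H}) \xrightarrow{\Delta_{Y/X}} \Br(\ov{X}) \to \Br(\ov{Y})$, so exactness yields $\Pic(\ov{H'})/\Pic(\ov{Z}) \cong \textup{Ker}(\Br(\ov{X}) \to \Br(\ov{Z}))$ and $\Pic(\ov{H})/\Pic(\ov{Y}) \cong \textup{Ker}(\Br(\ov{X}) \to \Br(\ov{Y}))$. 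Since $\ov{Y} \to \ov{X}$ factors through $\ov{Z}$, the first kernel is contained in the second, and the rightmost column of (\ref{diagramme 2 BD}) is the identity of $\Br(\ov{X})$. Using that $\Pic(\ov{H'}) \to \Pic(\ov{H})$ is surjective (Lemme~\ref{lem surj Pic}), a diagram chase shows that the induced vertical map $\Pic(\ov{H'})/\Pic(\ov{Z}) \to \Pic(\ov{H})/\Pic(\ov{Y})$ is an isomorphism compatible with the two Brauer kernels; hence those two nested kernels coincide and all three groups are canonically isomorphic, which is exactly the assertion.

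I expect the diagram chase of the second step to be the main obstacle. The point is that surjectivity of $\Pic(\ov{H'}) \to \Pic(\ov{H})$ controls surjectivity of the induced map of cokernels, whereas injectivity demands tracking how $\Pic(\ov{Z}) \to \Pic(\ov{Y})$ meshes with $\varphi'$ and $\varphi$ so that no new class is created downstairs; this is precisely where a four-lemma (or snake-lemma) bookkeeping on (\ref{diagramme 2 BD}) is genuinely needed. Once the cohomology of $C_{\ov{Y}/X}$ in the first step has been read off, everything else is formal.
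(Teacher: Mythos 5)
Your proposal is correct and takes essentially the same route as the paper: the same two diagrams, the same appeal to Lemme \ref{lem surj Pic}, and the same diagram chase identifying $\Pic(\ov{H'})/\Pic(\ov{Z}) \cong \Pic(\ov{H})/\Pic(\ov{Y}) \cong \textup{Ker}(\Br(\ov{X}) \to \Br(\ov{Y}))$; your first step merely fills in the verification the paper leaves as ``on v\'erifie alors facilement''. One small caveat: the columns of the first diagram do not form a termwise short exact sequence of complexes (for instance $\Div(\ov{X}) \to \Div(\ov{Z}) \to 0$ is not exact), but this is harmless because your operative argument is the cone computation --- quasi-isomorphisms in degrees $0$ and $1$ via Rosenlicht and Sansuc's sequence, so that the cone is $\left(\Pic(\ov{H'})/\Pic(\ov{Z})\right)[-2]$ --- which is exactly what is needed.
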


En particulier, la suite exacte longue associ\'ee \`a ce triangle exact induit la suite exacte :
$$0 \to \H^2(k, \UPic(\ov{X})) \to \H^2(k, C_{\ov{Y}/X}) \to \Br(\ov{X})^{\Gamma_k} \, .$$
Alors le th\'eor\`eme \ref{theo torseurs} et la proposition 2.19 de \cite{BvH} assurent que cette suite exacte s'int\`egre dans le diagramme commutatif \`a lignes et colonnes exactes suivant :
\begin{displaymath}
\xymatrix{
%& 0 \ar[d] & 0 \ar[d] & 0 \ar[d] \\
0 \ar[r] & \Br_a(X) \ar[r] \ar@{^{(}->}[d] & \Br_a(X,Y) \ar[r] \ar@{^{(}->}[d] & \Br(\ov{X})^{\Gamma_k} \ar[d]^= \\
0 \ar[r] & \H^2(k, \UPic(\ov{X})) \ar[r] \ar[d] & \H^2(k, C_{\ov{Y}/X}) \ar[r] \ar[d] & \Br(\ov{X})^{\Gamma_k} \ar[d] \\
0 \ar[r] & N^3(k, \Gm) \ar[r]^= & N^3(k, \Gm) \ar[r] & 0
}
\end{displaymath}
qui illustre le lien entre le th\'eor\`eme \ref{theo torseurs}
(deuxi\`eme colonne) et le r\'esultat de Borovoi et van Hamel
(premi\`ere colonne).

Consid\'erons \'egalement le cas des espaces homog\`enes : soit d'abord $G$ un $k$-groupe alg\'ebrique \emph{lin\'eaire} connexe, tel que $\Pic(\ov{G}) = 0$, et $X$ un espace homog\`ene de $G$ \`a stabilisateurs g\'eom\'etriques $\ov{H}$ connexes. On a introduit plus haut le complexe 
$$\widehat{\ov{C}}_X := [\widehat{\ov{G}} \to \widehat{Z_{\ov{H}\red}} \to \widehat{Z_{\ov{H}\sc}}]$$ 
et on peut \'egalement le comparer au complexe $\UPic(\ov{X})$. En effet, par le th\'eor\`eme 3.1 de \cite{BvH2} (th\'eor\`eme 5.8 de \cite{BvHprep}), on a un isomorphisme naturel dans la cat\'egorie d\'eriv\'ee :
$$\UPic(\ov{X}) \cong [\widehat{\ov{G}} \to \widehat{\ov{H}}] \, ,$$
dont on d\'eduit ais\'ement la proposition suivante :
\begin{prop}
\label{prop UPic esp hom}
Soit $G$ un $k$-groupe alg\'ebrique lin\'eaire connexe, tel que $G\ss$ soit simplement connexe (i.e. $\Pic(\ov{G}) = 0$), et $X$ un espace homog\`ene de $G$ \`a stabilisateurs g\'eom\'etriques $\ov{H}$ connexes.
On a un triangle exact canonique dans la cat\'egorie d\'eriv\'ee
$$\UPic(\ov{X}) \to \widehat{\ov{C}}_X \to \Pic(\ov{H})[-2] \to \UPic(\ov{X})[1] \, .$$
\end{prop}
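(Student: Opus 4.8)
The plan is to realize both $\UPic(\ov{X})$ and $\widehat{\ov{C}}_X$ by explicit small complexes and to exhibit a morphism between them whose mapping cone is $\Pic(\ov{H})[-2]$; the asserted triangle will then be the distinguished triangle attached to that morphism. By the quoted isomorphism (\cite{BvH2}, théorème 3.1) I represent $\UPic(\ov{X})$ by the length-$2$ complex $[\widehat{\ov{G}} \xrightarrow{\mathrm{res}} \widehat{\ov{H}}]$, with $\widehat{\ov{G}}$ in degree $0$ and differential the restriction of characters along $\ov{H} \subset \ov{G}$ (using $\widehat{\ov{H}} = \widehat{\ov{H}\red}$, since characters of the unipotent radical are trivial). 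First I would record, via the crossed-module quasi-isomorphism of Lemme \ref{lem qis} applied to $\ov{H}$ combined with théorème 4.8 of \cite{BvH}, the canonical identification $[\widehat{Z_{\ov{H}\red}} \to \widehat{Z_{\ov{H}\sc}}] \cong \UPic(\ov{H})$, so that $\ker(\widehat{Z_{\ov{H}\red}} \to \widehat{Z_{\ov{H}\sc}}) = \widehat{\ov{H}}$ and $\coker(\widehat{Z_{\ov{H}\red}} \to \widehat{Z_{\ov{H}\sc}}) = \Pic(\ov{H})$.

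Next I would define a morphism of complexes $f : [\widehat{\ov{G}} \to \widehat{\ov{H}}] \to \widehat{\ov{C}}_X$ by the identity in degree $0$, by the restriction $\widehat{\ov{H}} = \widehat{\ov{H}\red} \to \widehat{Z_{\ov{H}\red}}$ in degree $1$, and by $0$ in degree $2$. The first square commutes by transitivity of restriction, the character morphism $\widehat{\ov{G}} \to \widehat{Z_{\ov{H}\red}}$ factoring through $\widehat{\ov{H}\red}$; and the composite $\widehat{\ov{H}\red} \to \widehat{Z_{\ov{H}\red}} \to \widehat{Z_{\ov{H}\sc}}$ vanishes, because $Z_{\ov{H}\sc}$ maps into the derived subgroup $\ov{H}\ss$, on which every character of $\ov{H}\red$ is trivial. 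Hence $f$ is a genuine chain map, and it is canonical since the character map $\widehat{\ov{G}} \to \widehat{Z_{\ov{H}\red}}$ is independent of the chosen Lévi decomposition.

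I would then compute the induced maps on cohomology. The restriction $\widehat{\ov{H}\red} \to \widehat{Z_{\ov{H}\red}}$ is injective, because $\ov{H}\red = Z_{\ov{H}\red}\cdot\ov{H}\ss$ and a character trivial on the centre and on the derived subgroup is trivial; moreover its image is exactly $\ker(\widehat{Z_{\ov{H}\red}} \to \widehat{Z_{\ov{H}\sc}}) = \widehat{\ov{H}}$ by the identification above. Combined with the commuting first square, this shows that $f$ identifies $\ker(\widehat{\ov{G}} \to \widehat{\ov{H}})$ with $\ker(\widehat{\ov{G}} \to \widehat{Z_{\ov{H}\red}})$ and the quotient $\coker(\widehat{\ov{G}} \to \widehat{\ov{H}})$ with $\ker(\widehat{Z_{\ov{H}\red}} \to \widehat{Z_{\ov{H}\sc}})/\mathrm{im}(\widehat{\ov{G}} \to \widehat{Z_{\ov{H}\red}})$, i.e. that $f$ is an isomorphism on $H^0$ and on $H^1$. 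In degree $2$ the source is zero while $H^2(\widehat{\ov{C}}_X) = \coker(\widehat{Z_{\ov{H}\red}} \to \widehat{Z_{\ov{H}\sc}}) = \Pic(\ov{H})$. Therefore the cone of $f$ has cohomology concentrated in degree $2$, equal to $\Pic(\ov{H})$, whence $\mathrm{Cone}(f) \cong \Pic(\ov{H})[-2]$, and the triangle $\UPic(\ov{X}) \xrightarrow{f} \widehat{\ov{C}}_X \to \Pic(\ov{H})[-2] \to \UPic(\ov{X})[1]$ is the one claimed.

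The argument is essentially formal once the isomorphism of \cite{BvH2} is granted; the only point requiring genuine care—the main obstacle—is the verification that $f$ is a quasi-isomorphism in degrees $0$ and $1$. This reduces to the compatibility of the three restriction morphisms $\widehat{\ov{G}} \to \widehat{\ov{H}\red} \to \widehat{Z_{\ov{H}\red}}$ and to the identification $\ker(\widehat{Z_{\ov{H}\red}} \to \widehat{Z_{\ov{H}\sc}}) = \widehat{\ov{H}}$, both of which rest on the structure $\ov{H}\red = Z_{\ov{H}\red}\cdot\ov{H}\ss$ and on the quasi-isomorphism $\UPic(\ov{H}) \cong [\widehat{Z_{\ov{H}\red}} \to \widehat{Z_{\ov{H}\sc}}]$ already used above.
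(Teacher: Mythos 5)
Your proposal is correct and takes essentially the same route as the paper: the paper also rests the proposition on the isomorphism $\UPic(\ov{X}) \cong [\widehat{\ov{G}} \to \widehat{\ov{H}}]$ of Borovoi--van Hamel (\cite{BvH2}, th\'eor\`eme 3.1) and then simply states that the proposition is ``ais\'ement'' deduced, leaving the deduction implicit. Your explicit chain map $[\widehat{\ov{G}} \to \widehat{\ov{H}}] \to \widehat{\ov{C}}_X$, the identification of $\ker$ and $\coker$ of $\widehat{Z_{\ov{H}\red}} \to \widehat{Z_{\ov{H}\sc}}$ with $\widehat{\ov{H}}$ and $\Pic(\ov{H})$, and the resulting cone computation are exactly the details the paper omits, and they are carried out correctly.
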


Consid\'erons pour finir le cas d'un espace homog\`ene $X$ d'un groupe $G$ connexe non n\'ecessairement lin\'eaire, \`a stabilisateurs connexes.
Si $X(k) \neq \emptyset$, alors on a un isomorphisme $X \cong G/H$. Comme plus haut, on peut supposer $H$ lin\'eaire connexe et $G\ss$ simplement connexe.
Le r\'esultat suivant \'etend le r\'esultat de Borovoi et van Hamel
(th\'eor\`eme 3.1 de \cite{BvH2} et th\'eor\`eme 5.8 de \cite{BvHprep}) au cas non lin\'eaire :
\begin{prop}
Soit $X = G/H$ un espace homog\`ene d'un groupe connexe $G$, avec $G\ss$ simplement connexe et $H$ lin\'eaire (pas forc\'ement connexe).
On dispose alors d'un isomorphisme canonique dans la cat\'egorie d\'eriv\'ee :
$$\UPic(\ov{X}) \cong [\widehat{G\lin} \to \Pic(\ov{G}\abvar) \oplus \widehat{H}] \, .$$
\end{prop}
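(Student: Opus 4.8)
The plan is to realise $\UPic(\ov{X})$ as the fibre of a canonical morphism out of $\UPic(\ov{G})$, and then to rewrite $\UPic(\ov{G})$ by means of Theorem~\ref{theo qis}. The projection $\pi\colon\ov{G}\to\ov{X}$ is a torsor under $\ov{H}$; being flat and dominant between smooth integral varieties, it induces by pullback of rational functions and of Weil divisors a canonical morphism of complexes $\pi^*\colon\UPic(\ov{X})\to\UPic(\ov{G})$. First I would show that $\mathrm{Cone}(\pi^*)$ is canonically quasi-isomorphic to $\widehat{H}[0]$, the character module placed in degree $0$; the defining triangle of the cone then reads $\UPic(\ov{X})\to\UPic(\ov{G})\xrightarrow{\,v\,}\widehat{H}[0]\to\UPic(\ov{X})[1]$, so that $\UPic(\ov{X})$ is the fibre of $v$.

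To compute the cohomology of the cone I would invoke Sansuc's exact sequence for the $\ov{H}$-torsor $\pi$ (\cite{San}, proposition~6.10):
\[
0\to U(\ov{X})\to U(\ov{G})\xrightarrow{\mathrm{res}}\widehat{H}\to\Pic(\ov{X})\xrightarrow{\pi^*}\Pic(\ov{G})\xrightarrow{\mathrm{res}}\Pic(\ov{H}),
\]
in which $U(\ov{G})=\widehat{G}$ by the global Rosenlicht lemma (Proposition~\ref{prop Rosenlicht}) and the arrow $U(\ov{G})\to\widehat{H}$ is restriction of characters. As $\pi^*$ is injective on units, $\mathcal{H}^{-1}(\mathrm{Cone}(\pi^*))=0$; exactness at $\widehat{H}$ and at $\Pic(\ov{X})$ presents $\mathcal{H}^{0}(\mathrm{Cone}(\pi^*))$ as an extension of $\widehat{H}/\mathrm{res}(\widehat{G})$ by $\mathrm{res}(\widehat{G})$, hence $\mathcal{H}^{0}(\mathrm{Cone}(\pi^*))\cong\widehat{H}$; and $\mathcal{H}^{1}(\mathrm{Cone}(\pi^*))=\coker\!\big(\Pic(\ov{X})\xrightarrow{\pi^*}\Pic(\ov{G})\big)=\im\!\big(\Pic(\ov{G})\xrightarrow{\mathrm{res}}\Pic(\ov{H})\big)$. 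Since a complex with cohomology in a single degree is canonically quasi-isomorphic to that cohomology, everything comes down to the vanishing of $\mathcal{H}^{1}(\mathrm{Cone}(\pi^*))$.

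Thus the step I expect to be the main obstacle is to prove that the restriction $\Pic(\ov{G})\to\Pic(\ov{H})$ vanishes, and it is here that both hypotheses enter. Because $G\ss$ is simply connected one has $\Pic(\ov{G}\lin)=\Pic(\ov{G}\red)=0$ (\cite{San}), so the Chevalley sequence $1\to G\lin\to G\xrightarrow{p}G\abvar\to1$ forces every class of $\Pic(\ov{G})$ to be of the form $p^*L$ with $L\in\Pic(\ov{G}\abvar)$. Its restriction to $\ov{H}$ is $(p|_{\ov{H}})^*L$; but $p|_{\ov{H}}\colon\ov{H}\to\ov{G}\abvar$ is a morphism from a linear group to an abelian variety, so it contracts the identity component $\ov{H}^{0}$ to a point and factors through the finite (reduced, in characteristic $0$) group $\ov{H}/\ov{H}^{0}$, whose Picard group is trivial. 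Hence $(p|_{\ov{H}})^*L=0$ and $\Pic(\ov{G})\to\Pic(\ov{H})$ is zero. I emphasise that this argument never uses connectedness of $H$, which is exactly what makes the statement valid for a possibly disconnected stabiliser.

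It then remains to identify the fibre of $v$. By Theorem~\ref{theo qis}, $\UPic(\ov{G})\cong\widehat{C}'_G=[\widehat{T_G}\to\Pic(\ov{G}\abvar)\oplus\widehat{T_{G\sc}}]$; since $G\ss$ is simply connected the component $\widehat{T_G}\to\widehat{T_{G\sc}}$ is surjective with kernel $\widehat{G\lin}$, so the short exact sequence of complexes $0\to\Pic(\ov{G}\abvar)[-1]\to\widehat{C}'_G\to[\widehat{T_G}\to\widehat{T_{G\sc}}]\to0$, together with the quasi-isomorphism $[\widehat{T_G}\to\widehat{T_{G\sc}}]\cong\widehat{G\lin}[0]$, yields a canonical quasi-isomorphism $\UPic(\ov{G})\cong[\widehat{G\lin}\to\Pic(\ov{G}\abvar)]$, whose degree-$0$ cohomology is $U(\ov{G})=\widehat{G}$. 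The canonical map $v$ to $\widehat{H}[0]$ is represented by a chain map whose degree-$0$ component $\widehat{G\lin}\to\widehat{H}$ induces on $\mathcal{H}^0$ the restriction $\widehat{G}\to\widehat{H}$, and its fibre is precisely $[\widehat{G\lin}\to\Pic(\ov{G}\abvar)\oplus\widehat{H}]$, with differential the sum of the transgression $\widehat{G\lin}\to\Pic(\ov{G}\abvar)$ of the Chevalley extension and this restriction. Verifying that this chain-level map coincides with the one defining $\widehat{C}_X$ is the remaining (routine but delicate) point, for which one re-uses the explicit diagrams from the proof of Theorem~\ref{theo qis X}; granting it, one obtains the asserted canonical isomorphism $\UPic(\ov{X})\cong[\widehat{G\lin}\to\Pic(\ov{G}\abvar)\oplus\widehat{H}]$ in the derived category.
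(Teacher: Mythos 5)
Your strategy (realising $\UPic(\ov{X})$ as the fibre of a map $\UPic(\ov{G})\to\widehat{H}[0]$ and computing the cone via Sansuc's sequence) is genuinely different from the paper's proof, which never leaves the equivariant world: the paper identifies $\UPic(\ov{X})$ with the complex built from $\UPic_{\ov{G}\lin}(\ov{X})^1$ and then splits $\Pic_{\ov{G}\lin}(\ov{X})$ using the lemme \ref{lem Pic T Z} applied to $H\subset G\lin\subset G$. Unfortunately your route has a genuine gap at exactly the point you advertise as its virtue. Sansuc's exact sequence (\cite{San}, proposition 6.10) is proved for torsors under \emph{connected} linear groups, and your computation of the cone uses it for the possibly disconnected group $\ov{H}$; in particular the step $\mathcal{H}^{1}(\textup{C\^one}(\pi^*))=\coker\bigl(\Pic(\ov{X})\xrightarrow{\pi^*}\Pic(\ov{G})\bigr)=\textup{im}\bigl(\Pic(\ov{G})\to\Pic(\ov{H})\bigr)$, i.e. exactness at $\Pic(\ov{G})$, is precisely what fails for disconnected $H$. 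So connectedness of $H$ \emph{is} used in your argument, hidden inside the citation.

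The gap cannot be papered over, because in the generality you claim the statement itself is false. Take $G=E$ an elliptic curve (so $G\ss=1$ is simply connected, $G\lin=1$, $G\abvar=E$) and $H=\Z/2$ generated by a $2$-torsion point, so that $H$ is linear, not connected, and $X=G/H=E'$ is an isogenous elliptic curve. Your vanishing step holds trivially ($\Pic(\ov{H})=0$), yet $\coker\bigl(\Pic(\ov{E'})\xrightarrow{\pi^*}\Pic(\ov{E})\bigr)\cong\Z/2$ because pullback along the degree-$2$ isogeny doubles degrees; hence $\mathcal{H}^{1}$ of your cone is non-zero, Sansuc's sequence is not exact at $\Pic(\ov{G})$, and the asserted conclusion fails: $\Pic(\ov{E'})$ is not isomorphic to $\Pic(\ov{E})\oplus\Z/2$ (compare $2$-torsion subgroups, of orders $4$ and $8$). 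What the example reveals is an implicit hypothesis in the proposition: $H$ must lie in $G\lin$ (equivalently, $H\to G\abvar$ must be trivial), which is automatic for connected $H$ and which the paper's proof uses tacitly in its very first line, $X\lin:=G\lin/H$. With that hypothesis added, your plan could be salvaged, but the needed exact sequence for a torsor under a disconnected $H\subset G\lin$ would then have to be proved, essentially by the equivariant arguments (lemme \ref{lem Pic T Z}, \cite{KKV}, \cite{Pop}) that the paper invokes directly. Two further, more minor, weaknesses: an extension of $\widehat{H}/\textup{res}(\widehat{G})$ by $\textup{res}(\widehat{G})$ is not canonically (nor even abstractly) isomorphic to $\widehat{H}$, so to get $\mathcal{H}^{0}(\textup{C\^one}(\pi^*))\cong\widehat{H}$ you must construct the natural map $\widehat{H}\to\mathcal{H}^{0}(\textup{C\^one}(\pi^*))$ (character $\mapsto$ associated line bundle on $X$ with the tautological trivialisation of its pullback to $G$) and prove it bijective; and the final chain-level identification of $v$, which you defer as ``routine but delicate'', is where all the content of the paper's diagrammatic proof actually sits.
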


\begin{proof}
Notons $X\lin := G\lin / H$.
Alors le diagramme commutatif suivant :
\begin{displaymath}
\xymatrix{
\widehat{G\lin} \ar[r] \ar[d]^= & \Pic(\ov{G}\abvar) \oplus \widehat{H} \ar[d]^{\cong} \\
\widehat{G\lin} \ar[r] & \Pic_{\ov{G}\lin}(\ov{G}) \oplus \Pic_{\ov{G}\lin}(\ov{X}\lin) \\
\widehat{G\lin} \ar[r] \ar[u]^= & \Pic_{\ov{G}\lin}(\ov{X}) \ar[u]^{\cong} \\
\widehat{G\lin} \oplus \ov{k}(X)^* / \ov{k}^* \ar[r] \ar[u] \ar[d] & \UPic_{\ov{G}\lin}(\ov{X})^1 \ar[u] \ar[d] \\
\ov{k}(X)^* / \ov{k}^* \ar[r] & \textup{Div}(\ov{X})
}
\end{displaymath}
r\'ealise un isomorphisme dans la cat\'egorie d\'eriv\'ee (voir lemme \ref{lem Pic T Z})
$$[\widehat{G\lin} \to \Pic(\ov{G}\abvar) \oplus \widehat{H}] \cong \UPic(\ov{X}) \, .$$
\end{proof}

Par cons\'equent, le th\'eor\`eme \ref{theo qis} assure que le triangle exact de la proposition \ref{prop UPic torseurs}
$$\UPic(\ov{X}) \to C_{\ov{G}/X} \to \Pic(\ov{H})[-2] \to \UPic(\ov{X})[1]$$
est repr\'esentable par le triangle exact \'evident de complexes
$$[\widehat{G\lin} \to \Pic(\ov{G}\abvar) \oplus \widehat{H}] \to [\widehat{G\lin} \to \Pic(\ov{G}\abvar) \oplus \widehat{Z_{\ov{H}\red}} \to \widehat{Z_{\ov{H}\sc}}] \to $$
$$\to \widehat{\mu_H}(\ov{k})[-2] \to [\widehat{G\lin} \to \Pic(\ov{G}\abvar) \oplus \widehat{H}][1] \, .$$

\bigskip

%\bibliography{formulebrauer}
%\bibliographystyle{alpha-fr}

\end{document}